\begin{document}
\selectlanguage{english}

\title{Subvarieties of moduli spaces of sheaves via finite coverings}
\author{Markus Zowislok\footnote{Department of Mathematics, Imperial College London, 180 Queen's Gate, London SW7 2AZ, UK}}\date{}\maketitle\newtheorem{theorem}{Theorem}[section]\newtheorem{definition}[theorem]{Definition}\newtheorem{corollary}[theorem]{Corollary}\newtheorem{lemma}[theorem]{Lemma}\newtheorem{proposition}[theorem]{Proposition}\newtheorem{example}[theorem]{Example} \newcommand{\acks}{\vspace{0.2cm}\noindent\textit{Acknowledgements. }}\newcommand{\titar}{\textit} 


\begin{abstract}
Given a finite unbranched covering of a nonsingular projective scheme we analyse the morphism between moduli spaces of sheaves induced by pullback.
We have a closer look at cyclic coverings and, in particular, at canonical coverings of surfaces.
Our main application is the construction of Lagrangian subvarieties of certain irreducible holomorphically symplectic manifolds that arise from moduli spaces of sheaves on K3 or abelian surfaces.
\end{abstract}

\maketitle

\newcommand{\IQ}{\ensuremath{\mathbb{Q}}}\newcommand{\IR}{\ensuremath{\mathbb{R}}}\newcommand{\IZ}{\ensuremath{\mathbb{Z}}}
\newcommand{\IP}{\ensuremath{\mathbb{P}}}\newcommand{\IC}{\ensuremath{\mathbb{C}}}\newcommand{\IN}{\ensuremath{\mathbb{N}}}
\newcommand{\cO}{\ensuremath{\mathcal{O}}}\newcommand{\cH}{\ensuremath{\mathcal{H}}}
\newcommand{\bc}{\ensuremath{\mathrm{c}}}\newcommand{\bH}{\ensuremath{\mathrm{H}}}
\newcommand{\ext}{\ensuremath{\mathrm{ext}}}\newcommand{\Ext}{\ensuremath{\mathrm{Ext}}}\newcommand{\cExt}{\ensuremath{\mathcal{E}xt}}
\renewcommand{\hom}{\ensuremath{\mathrm{hom}}}\newcommand{\Hom}{\ensuremath{\mathrm{Hom}}}
\newcommand{\rk}{\ensuremath{\mathrm{rk\,}}}\newcommand{\codim}{\ensuremath{\mathrm{codim}}}
\newcommand{\Coh}{\ensuremath{\mathrm{Coh}}}
\newcommand{\Pic}{\ensuremath{\mathrm{Pic}}}\newcommand{\Num}{\ensuremath{\mathrm{Num}}}
\newcommand{\CH}{\ensuremath{\mathrm{CH}}}\newcommand{\NS}{\ensuremath{\mathrm{NS}}}
\newcommand{\Amp}{\ensuremath{\mathrm{Amp}}}
\newcommand{\Hev}{\ensuremath{\Lambda(Y)}}\newcommand{\Hpev}{\ensuremath{\Lambda'(Y)}}
\newcommand{\Hilb}{\ensuremath{\mathrm{Hilb}}}
\newcommand{\lel}{\ensuremath{\;(\le)\;}}\newcommand{\lelo}{\ensuremath{\;(\le_0)\;}}
\newcommand{\lcm}{\ensuremath{\mathrm{lcm}}}\newcommand{\gcf}{\ensuremath{\mathrm{gcd}}}
\newcommand{\ord}{\ensuremath{\mathrm{ord}}}
\newcommand{\qede}{\vspace{-1.3cm}\[\qedhere\]}
\renewcommand{\labelenumi}{(\arabic{enumi})\ }\renewcommand{\labelenumii}{(\alph{enumii})\ }

\newtheorem{question}[theorem]{Question}\newtheorem{conjecture}[theorem]{Conjecture}

\section{Introduction}

Is Gieseker stability preserved under pullback by a finite covering? How are the corresponding moduli spaces related?
In this article, we exhibit the behaviour of stability under pullback by a finite unbranched covering of a nonsingular projective scheme.
The roots of this question go back to Kim's article \cite{Kim98a}, which is on the canonical covering of an Enriques surface and $\mu$-stable sheaves.

Our main results hold not only for the notion of Gieseker stability, but also for twisted stability and for $(H,A)$-stability. Therefore we will always write (semi)stable whenever the statement allows all three notions. These stability notions are recalled in Section \ref{StabNot}.

For the whole article let $X$ be a nonsingular projective irreducible variety over $\IC$, $G$ a finite group acting freely on $X$, $f\colon X\to Y$ the quotient of $X$ by $G$, and $H$ an ample divisor on $Y$.
In Section \ref{arbdim} we show that the pullback $f^*E$ of a semistable sheaf $E$ on $Y$ is again semistable (Proposition \ref{PbStab}).
After some more precise results on the behaviour of the stability property, we apply these results to moduli spaces:\\

\noindent \textbf{Theorem \ref{MHApb}}. \titar{Let $P$ be a polynomial, $M_Y$ a quasiprojective and nonempty subscheme of the moduli space $M_Y(P)$ of semistable sheaves on $Y$ with (twisted) Hilbert polynomial $P$, $M_X=M_X(\deg f\cdot P)$ the moduli space of semistable sheaves on $X$ with (twisted) Hilbert polynomial $\deg f\cdot P$, and $M_Y^s$ and $M_X^s$ the respective loci of stable sheaves.
The pullback by $f$ induces a morphism $f^*\colon M_Y\to M_X$ which maps closed points $[E]$ to $[f^*E]$.
The closed points of its image are represented by polystable $G$-sheaves, and $(f^*)^{-1}( M_X^s )\subseteq M_Y^s\,.$}\\

The restriction to a cyclic covering given by a line bundle $L$ of finite order in Section \ref{arbCC} allows a deeper analysis of the pullback morphism. The main tool is the group action on the moduli space of sheaves on $Y$ induced by tensoring with $L$. This allows us to give a precise description of the locus of stable sheaves becoming strictly semistable in Theorem \ref{CycStab}.\\

Section \ref{Surfaces} contains the application to canonical coverings of surfaces. We investigate the double covering $X\to Y$ of an Enriques surface $Y$ by a K3 surface $X$ as well as the canonical covering $X\to Y$ of a bielliptic surface $Y$ by an abelian surface $X$.
The main results are given in Theorems \ref{CC} and \ref{CCsst}.
An interesting question is in what cases there are Lagrangian subvarieties as images of $f^*$ inside the moduli spaces of semistable sheaves on $X$, and what kind of varieties these subvarieties are. 
If $X$ is a K3 surface and $f^*u$ is primitive then the moduli space $M_X(f^*u)$ in general is an irreducible symplectic manifold, and
whenever there is a suitable stable sheaf on $Y$, one gets a Lagrangian subvariety as described in Proposition \ref{CanDouSt}.
If $X$ is an abelian surface, the situation is different: in order to produce higher dimensional irreducible symplectic manifolds out of moduli spaces of sheaves on $X$ one has to get rid of superfluous factors in the Bogomolov decomposition by taking a fibre of the Albanese map. The last part of Section \ref{LIHS} explains how and why this reduction reduces the Lagrangian subvarieties to (smaller) Lagrangian subvarieties in the case of a double covering, resulting in Proposition \ref{CanDouStK}.

The most prominent Lagrangian subvarieties of irreducible holomorphically symplectic manifolds occur as fibres or sections of Lagrangian fibrations. Recently, smooth Lagrangian tori have attracted a lot of interest, sparked by the following question of Beauville \cite{Beau10}:
If an irreducible holomorphically symplectic manifold $M$ contains a smooth Lagrangian torus, is this a fibre of a Lagrangian fibration on $M$? 
This has been answered positively combining \cite{GLR11}, \cite{HW12}, and \cite{Mat12}.

It would be interesting to understand better, which kind of Lagrangian subvarieties can be obtained by Propositions \ref{CanDouSt} and \ref{CanDouStK}. 
Some concrete results are contained in Section \ref{example}.
Unfortunately, our knowledge on moduli spaces of sheaves on Enriques and bielliptic surfaces is quite limited at the moment.
However, we expect that complex tori do not occur as Lagrangian subvarieties in the case of sheaves of odd rank. In particular, they do not occur as image under pullback of the moduli space of sheaves of rank one on Enriques surfaces, i.e.\ in the Hilbert scheme case, as explained in Section \ref{example}.
The case of even rank seems to be more promising, as an example of Hauzer can be used to produce an elliptic curve as Lagrangian subvariety in a K3 surface, as explained in Section \ref{example} as well. Hence there is hope that higher-dimensional examples of Lagrangian tori inside symplectic varieties may occur.

On the other side, it is interesting in its own to construct different Lagrangian subvarieties and, in particular, to study their intersection. In \cite{BF09} the authors construct a Gerstenhaber algebra structure and a compatible Batalin-Vilkovisky module structure
giving rise to a de Rham type cohomology theory for Lagrangian intersections.

Shortly after the first version of this article appeared on arXiv.org, an independent article by Sacc\`a \cite{Sac12} on the case of one-dimensional sheaves on Enriques surfaces and the induced pullback by the canonical double covering appeared there as well. Her results are related to my Question \ref{OSgen}.

Finally we give an outlook in Section \ref{Outlook} considering the case of surfaces of general type,
and an appendix briefly recalls the notion of a general ample divisor which occurs in Section \ref{Surfaces}.\\

\acks
{\small The author would like to express his gratitude to S\"onke Rollenske for fruitful discussions and helpful suggestions.
He thanks Arvid Perego for directing his attention to Kim's articles and Hauzer's article and Manfred Lehn for useful comments.
Moreover, he thanks the DFG for the SFB 701, which provides a vibrant research atmosphere at Bielefeld.
This article was completed during a stay at Imperial College London supported by a DFG research fellowship (Az.: ZO 324/1-1).}

\section{Pullback by finite \'etale coverings and stability}\label{arbdim}

In this section we recall the considered notions of stability of sheaves, analyse their behaviour under pullback by $f$ and apply the results to get our general Theorem \ref{MHApb} on the moduli spaces of sheaves.
We assume familiarity with the material presented in \cite{HL10} and use the notation therein.

\subsection{Three stability notions}\label{StabNot}

Our main results hold for the notions of Gieseker stability, twisted stability and $(H,A)$-stability.
Twisted stability and $(H,A)$-stability are two generalisations of Gieseker stability, with an overlap in the case of a surface, see \cite[Corollary 6.2.6]{Zow10}.
We briefly recall the definitions.

\begin{enumerate}
\item \textit{Gieseker stability.} 
A detailed treatment of this notion can be found e.g.\ in \cite[Section 1.2]{HL10}.
Let $H$ be an ample divisor on $Y$ and $E$ a nontrivial coherent sheaf on $Y$.
The Hilbert polynomial of $E$ is $P_H(E)(n):=\chi(E\otimes H^{\otimes n})$. Its leading coefficient multiplied by $(\dim E)!$ is called multiplicity of $E$ and denoted here by $\alpha^H(E)$. It is always positive, and $p_H(E)(n):=\frac{\chi(E\otimes H^{\otimes n})}{\alpha^H(E)}$ is called reduced Hilbert polynomial of $E$.
With these at hand, one says that $E$ is $H$-(semi)stable if $E$ is pure and for all nontrivial proper subsheaves $F\subseteq E$ one has that $p_H(F) \lel p_H(E)$, i.e.\ $p_H(F)(n) \lel p_H(E)(n)$ for $n\gg 0$. 

In order to avoid case differentiation for stable and semistable sheaves we here follow the Notation 1.2.5 in \cite{HL10} using bracketed inequality signs, e.g.\ an inequality with $\lel$ for (semi)stable sheaves means that one has $\le$ for semistable sheaves and $<$ for stable sheaves.
\item \textit{Twisted stability as defined in \cite[Definition 4.1]{Yos03b}.}
Let $H$ be an ample divisor on $Y$, $V$ a locally free sheaf on $Y$ and $E$ a nontrivial coherent sheaf on $Y$.
The $V$-twisted Hilbert polynomial is $P^V_H(E)(n):=\chi(E\otimes V\otimes H^{\otimes n})$, and
the reduced $V$-twisted Hilbert polynomial is $$p^V_H(E)(n):=\frac{\chi(E\otimes V\otimes H^{\otimes n})}{\alpha^H(E\otimes V)}\,.$$
One says that $E$ is $V$-twisted $H$-(semi)stable if $E$ is pure and for all nontrivial proper subsheaves $F\subseteq E$ one has that $p^V_H(F) \lel p^V_H(E)$.

\item \textit{$(H,A)$-stability as defined in \cite[Definition 7.1]{Zow12}.}
Let $H$ and $A$ be two ample divisors on $Y$ and $E$ a nontrivial coherent sheaf on $Y$.
We defined
$$
P_{H,A}(E)(m,n):=\chi(E\otimes H^{\otimes m} \otimes A^{\otimes n}) \quad\mathrm{and}\quad
p_{H,A}(E):=\frac{P_{H,A}(E)}{\alpha^H(E)} \,.
$$
These are polynomials in $m$ and $n$ with degree $d:=\dim E$ in $n$ and $m$ and total degree $d$, and one has $P_{H,A}(E)(\bullet,0)=P_{H}(E)$ and $p_{H,A}(E)(\bullet,0)=p_{H}(E)$.

There is a natural ordering of polynomials in one variable given by the lexicographic ordering of their coefficients.
This generalises to polynomials of two variables by the identification $\IQ[m,n]=(\IQ[m])[n]$, i.e.\ we consider the elements as polynomials in $n$ and use the ordering of $\IQ[m]$ for comparing coefficients.

We introduce another ordering on $\IQ[m,n]$ by defining
$$f\le_0 g\quad:\Leftrightarrow\quad (f(\bullet,0),-f)\le(g(\bullet,0),-g)$$ for $f,g\in\IQ[m,n]$, where on the right hand side we use lexicographic ordering on the product $\IQ[m]\times\IQ[m,n]$, i.e.\ $f\le_0 g$ if and only if $f(\bullet,0)<g(\bullet,0)$ or $f(\bullet,0)=g(\bullet,0)$ and $f\ge g$.
Clearly one has $f=_0g$ if and only if $f=g$.

We say that $E$ is $(H,A)$-(semi)stable if it is pure and if for any proper nontrivial subsheaf $F\subset E$ one has that $p_{H,A}(F)\lelo p_{H,A}(E)\,.$
\end{enumerate}
The case of Gieseker stability can be regained by $V=\cO_Y$ from twisted stability or by $H=A$ from $(H,A)$-stability.
We will always write (semi)stable whenever the statement allows all three notions.
For twisted stability, we will always tacitly assume to have chosen a locally free sheaf $V$ on $Y$, and for $(H,A)$-stability to have chosen an additional ample divisor $A$ on $Y$.

If $H$ and $A$ are two ample divisors on $Y$ and $V$ is a locally free sheaf on $Y$, then the divisors $f^*H$ and $f^*A$ are ample divisors on $X$, and $f^*V$ is a locally free sheaf on $X$.
Also for a sheaf on $X$, we will write (semi)stable instead of Gieseker $f^*H$-(semi)stable, $f^*V$-twisted $f^*H$-(semi)stable or $(f^*H,f^*A)$-(semi)stable.
Moreover, we will denote the usual reduced Hilbert polynomial $p_H(E)$, the reduced $V$-twisted Hilbert polynomial $p_H^V(E)$ and the polynomial $p_{H,A}(E)$ by $p(E)$, and analogously for a sheaf $E'\in\Coh(X)$ one might insert $p_{f^*H}(E')$, $p_{f^*H}^{f^*V}(E')$ or $p_{f^*H,f^*A}(E')$ according to the notion one is interested in.
If we compare the polynomials of $E$ and of a nontrivial subsheaf $F\subseteq E$, then e.g.\ $p(F) \le p(E)$ has to be understood as $p_H(F) \le p_H(E)$, $p_H^V(F) \le p_H^V(E)$, and $p_{H,A}(F) \le_0 p_{H,A}(E)$ (!), respectively. Whenever we need to be more precise, we use the more explicit notation.

\subsection{$f^*$ preserves pureness}

As pureness is part of the definition of stability, we need to check its preservation under pullback.

\begin{lemma}\label{PurePbInv}
Let $E$ be a coherent sheaf on $Y$. $E$ is pure if and only if $f^*E$ is pure.
\end{lemma}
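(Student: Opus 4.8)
My plan is to exploit the two adjoint functors $f^*$ and $f_*$, using that $f$ is finite, flat (indeed \'etale) and surjective. Recall that a coherent sheaf $E$ is pure of dimension $d=\dim E$ precisely when it has no nonzero subsheaf of dimension strictly less than $d$, equivalently when all its associated points have dimension $d$. I would first record three properties to be used for both implications: (i) $f^*$ is exact and faithful, since $f$ is flat and surjective; (ii) $f_*$ is exact and faithful, since $f$ is finite (hence affine) and surjective; and (iii) both functors preserve the dimension of the support, because a finite morphism has finite fibres and is closed, hence preserves the dimension of closed subsets. Concretely $\operatorname{supp}f^*E=f^{-1}(\operatorname{supp}E)$ gives $\dim f^*E=\dim E$, and $\operatorname{supp}f_*F=f(\operatorname{supp}F)$ gives $\dim f_*F=\dim F$ for all coherent $E$ on $Y$ and $F$ on $X$.

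For the implication ``$f^*E$ pure $\Rightarrow$ $E$ pure'' I would argue by contraposition. If $E$ is not pure, choose a nonzero subsheaf $F\subseteq E$ with $\dim F<\dim E$. Applying the exact functor $f^*$ gives an inclusion $f^*F\subseteq f^*E$; by faithfulness $f^*F\neq0$, and by (iii) $\dim f^*F=\dim F<\dim E=\dim f^*E$. Thus $f^*E$ has a nonzero subsheaf of smaller dimension and is not pure.

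For the converse, assume $E$ is pure and suppose, for a contradiction, that there is a nonzero $G\subseteq f^*E$ with $\dim G<\dim f^*E=\dim E$. Applying the exact functor $f_*$ yields $f_*G\subseteq f_*f^*E$. The projection formula gives $f_*f^*E\cong E\otimes_{\cO_Y}f_*\cO_X$, and since $f$ is finite and flat over the nonsingular irreducible $Y$, the sheaf $f_*\cO_X$ is locally free of rank $\deg f$; hence $E\otimes f_*\cO_X$ is, locally, a direct sum of $\deg f$ copies of $E$. I then claim that $E\otimes V$ is pure for any locally free $V$ whenever $E$ is pure: this holds because the associated points of $E\otimes V$ coincide with those of $E$ (locally $E\otimes V\cong E^{\oplus\rk V}$), so all of them have dimension $\dim E$. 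Therefore $f_*f^*E$ is pure. But $f_*G$ is nonzero by faithfulness of $f_*$ and, by (iii), $\dim f_*G=\dim G<\dim E=\dim f_*f^*E$, so it is a nonzero subsheaf of smaller dimension, contradicting purity.

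I expect the only non-formal step to be the claim that $E\otimes f_*\cO_X$ is pure, i.e.\ that tensoring a pure sheaf by a locally free sheaf preserves purity; the cleanest justification is via associated points rather than exactness alone, so that is where one must be slightly careful. Everything else is bookkeeping with the exactness, faithfulness, and dimension-preservation of $f^*$ and $f_*$ recorded in (i)--(iii).
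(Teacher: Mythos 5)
Your proof is correct, but it takes a genuinely different route from the paper. The paper characterises purity via the dual sheaf of \cite[Definition 1.1.7]{HL10}: $E$ is pure iff the natural map $\theta_E\colon E\to E^{DD}$ is injective, where $E^D=\cExt^c(E,K_Y)$; it then uses the canonical isomorphism $f^*\cExt^c_Y(E,K_Y)\cong\cExt^c_X(f^*E,f^*K_Y)$ from EGA together with $f^*K_Y=K_X$ to identify $f^*(E^{DD})$ with $(f^*E)^{DD}$, so that exactness of $f^*$ transfers (non-)injectivity of $\theta_E$ to $\theta_{f^*E}$ in both directions at once. You instead work directly with the definition of purity in terms of low-dimensional subsheaves: the easy direction via exactness, faithfulness and dimension-preservation of $f^*$, and the hard direction by pushing a destabilising subsheaf $G\subseteq f^*E$ forward along the finite exact functor $f_*$, using the projection formula $f_*f^*E\cong E\otimes f_*\cO_X$ and the fact that tensoring with a locally free sheaf preserves associated points (hence purity). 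Both arguments are sound. The paper's proof is shorter and symmetric, but leans on duality machinery and the cited EGA compatibility of $\cExt$ with flat pullback; yours is more elementary and self-contained, avoids the dual sheaf entirely, and uses only ingredients the paper needs anyway (the projection formula you invoke is essentially the paper's Lemma \ref{CycShDec1}); your crucial step, purity of $E\otimes f_*\cO_X$, is correctly reduced to the local isomorphism $E\otimes V\cong E^{\oplus\rk V}$ and invariance of associated points, which is exactly the point where care is required. Note also that your argument does not actually use nonsingularity of $Y$: finiteness and flatness of $f$ already make $f_*\cO_X$ locally free.
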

\begin{proof}
For a coherent sheaf $E$ on $Y$ the authors of \cite{HL10} define in Definition 1.1.7 the dual sheaf of $E$ to be $E^D:=\cExt^c(E,K_Y)$, where $K_Y$ is the canonical bundle and $c:=\dim Y-\dim E$ is the codimension of the sheaf $E$. This dualisation commutes with pullback:
By \cite{EGA31} (13.3.5) there is a canonical isomorphism
$$f^*\cExt^c_Y(E,K_Y) \cong \cExt^c_X(f^*E,f^*K_Y)\;.$$
By \cite[Section I.16]{BHPV} one has $f^*K_Y=K_X$, thus $f^*(E^D) \cong (f^*E)^D$.

Now let $E$ be pure. Then by \cite[Lemma 1.1.10]{HL10} the natural homomorphism $\theta_E\colon E\to E^{DD}$ is injective.
As $f^*$ is exact, $f^*(\theta_E)\colon f^*E\to f^*(E^{DD})$ is injective as well.
Using the isomorphism $f^*(E^{DD}) \cong (f^*E)^{DD}$ from above one gets the injectivity of the natural morphism $\theta_{f^*E}\colon f^*E\to (f^*E)^{DD}$, hence $f^*E$ is pure.

Conversely, let $E$ be not pure. Then the natural homomorphism $\theta_E\colon E\to E^{DD}$ has a nontrivial kernel. The exactness of $f^*$ gives a nontrivial kernel of $\theta_{f^*E}\colon f^*E\to (f^*E)^{DD}$, hence $f^*E$ is not pure.
\end{proof}

\subsection{$f^*$ and (semi)stability}

Let $H$ be an ample divisor on $Y$ and $E$ a coherent sheaf on $Y$.
In order to exhibit stability under pullback, we need the behaviour of the reduced Hilbert polynomial under pullback.
\begin{lemma}\label{rHpPb}
$p(f^*E) = p(E)\,.$
\end{lemma}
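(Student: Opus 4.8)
The plan is to reduce the statement to one fact about Euler characteristics: that pullback by the degree-$d$ covering $f$ (write $d:=\deg f$) multiplies the Hilbert polynomial by $d$. Since the normalising multiplicity will turn out to be multiplied by the same factor $d$, this factor cancels in the reduced polynomial and yields the claim. As a preliminary I would record that $f^*$ is a tensor functor, so $f^*(E\otimes H^{\otimes n})=f^*E\otimes(f^*H)^{\otimes n}$, and likewise $f^*(E\otimes V\otimes H^{\otimes n})=f^*E\otimes f^*V\otimes(f^*H)^{\otimes n}$ and $f^*(E\otimes H^{\otimes m}\otimes A^{\otimes n})=f^*E\otimes(f^*H)^{\otimes m}\otimes(f^*A)^{\otimes n}$; hence in each of the three notions the numerator of $p(f^*E)$ is the Euler characteristic over $X$ of the pullback of the sheaf whose Euler characteristic over $Y$ is the numerator of $p(E)$.

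The key identity is $\chi(X,f^*\mathcal G)=d\cdot\chi(Y,\mathcal G)$ for every coherent sheaf $\mathcal G$ on $Y$, and this is exactly where étaleness enters. Because $f$ is unbranched, $\Omega_{X/Y}=0$ and therefore $T_X\cong f^*T_Y$, so $\mathrm{td}(X)=f^*\mathrm{td}(Y)$. Combining this with $\mathrm{ch}(f^*\mathcal G)=f^*\mathrm{ch}(\mathcal G)$ and the projection formula $\int_X f^*\gamma=d\int_Y\gamma$ for a finite degree-$d$ morphism of smooth projective varieties of equal dimension, Grothendieck--Riemann--Roch yields $\chi(X,f^*\mathcal G)=\int_X f^*\!\big(\mathrm{ch}(\mathcal G)\,\mathrm{td}(Y)\big)=d\int_Y\mathrm{ch}(\mathcal G)\,\mathrm{td}(Y)=d\cdot\chi(Y,\mathcal G)$. (Alternatively one may use $f_*f^*\mathcal G\cong\mathcal G\otimes f_*\cO_X$ together with the fact that for étale $f$ the sheaf $f_*\cO_X$ has trivial rational Chern character.) Applied to $\mathcal G=E\otimes H^{\otimes n}$ and to its twisted and bigraded analogues, this multiplies every numerator by $d$.

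For the denominators I would use that $f$ is finite, so $\dim f^*E=\dim E$; the leading term in $n$ therefore remains in the same degree, and multiplying the Hilbert polynomial by $d$ multiplies its leading coefficient, and with it the multiplicity, by $d$. Thus $\alpha^{f^*H}(f^*E)=d\cdot\alpha^H(E)$, and for twisted stability $\alpha^{f^*H}(f^*E\otimes f^*V)=\alpha^{f^*H}(f^*(E\otimes V))=d\cdot\alpha^H(E\otimes V)$. Forming the quotients, the factor $d$ cancels and gives $p(f^*E)=p(E)$ uniformly for Gieseker, twisted and $(H,A)$-stability. The one genuine obstacle is the Euler-characteristic identity; once étaleness is used to match the Todd classes it follows at once, and the rest---keeping the three notions parallel and tracking the cancellation---is formal bookkeeping.
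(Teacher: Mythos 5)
Your proposal is correct and follows essentially the same route as the paper: both reduce the lemma to the identity $\chi(f^*\mathcal G)=\deg f\cdot\chi(\mathcal G)$ applied to $E\otimes H^{\otimes n}$ and its twisted and bigraded analogues, and then observe that the factor $\deg f$ cancels between the Hilbert polynomial and the multiplicity. The only difference is that the paper simply cites this Euler-characteristic identity from \cite[\S 12 Theorem 2]{Mum70}, which holds for any finite flat morphism, whereas you derive it from Hirzebruch--Riemann--Roch using \'etaleness to match the Todd classes --- a valid, self-contained alternative.
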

\begin{proof}
By \cite[\S 12 Theorem 2]{Mum70} one has $\chi(f^*E)=\deg f\; \chi(E)$. Thus on the one hand one has $p_{f^*H}^{f^*V}(f^*E) = p_H^V(E)$,
and on the other hand, one has $p_{f^*H,f^*A}(f^*E) = p_{H,A}(E)$.
\end{proof}

\begin{proposition}\label{PbStab}
$f^*E$ is semistable if and only if $E$ is semistable. If $f^*E$ is stable, then $E$ is stable.
More precisely,
\begin{enumerate}
\item 
$f^*E$ is $f^*H$-semistable if and only if $E$ is $H$-semistable.
If $f^*E$ is $f^*H$-stable, then $E$ is $H$-stable.
\item 
$f^*E$ is $f^*V$-twisted $f^*H$-semistable if and only if $E$ is $V$-twisted $H$-semistable.
If $f^*E$ is $f^*V$-twisted $f^*H$-stable, then $E$ is $V$-twisted $H$-stable.
\item 
$f^*E$ is $(f^*H,f^*A)$-semistable if and only if $E$ is $(H,A)$-semistable.
If $f^*E$ is $(f^*H,f^*A)$-stable, then $E$ is $(H,A)$-stable.
\end{enumerate}
\end{proposition}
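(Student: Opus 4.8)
The plan is to handle all three notions simultaneously, exploiting that Lemma \ref{rHpPb} gives $p(f^*F)=p(F)$ in each of the three settings and that Lemma \ref{PurePbInv} disposes of pureness uniformly. Thus, once pureness is granted, the whole statement reduces to comparing reduced (twisted) Hilbert polynomials of subsheaves on the two sides, and it is natural to split the proposition into an easy half and a hard half.

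For the easy half — that $f^*E$ semistable implies $E$ semistable, and $f^*E$ stable implies $E$ stable — I would simply pull back subsheaves. Pureness of $E$ is immediate from Lemma \ref{PurePbInv}. For any nontrivial proper subsheaf $F\subseteq E$, exactness and faithful flatness of $f^*$ make $f^*F\subseteq f^*E$ again nontrivial and proper, since $f^*(E/F)=f^*E/f^*F$ is nonzero. Applying the (semi)stability of $f^*E$ then yields $p(f^*F)\lel p(f^*E)$, and Lemma \ref{rHpPb} converts this into $p(F)\lel p(E)$. This settles the stable implication and the ``only if'' direction of the semistable equivalence in one stroke, for all three stability notions.

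The hard half is the converse, that $E$ semistable implies $f^*E$ semistable; here pulling back subsheaves of $E$ is insufficient, because an a priori destabilizing subsheaf of $f^*E$ need not descend to $Y$. The crucial extra structure is that $f$ is a Galois \'etale cover with group $G$ and that $f\circ g=f$ for every $g\in G$, so $g^*f^*E=(f\circ g)^*E=f^*E$ canonically; moreover each $g^*$ fixes the polarising data $f^*H$ (and $f^*A$, $f^*V$), hence preserves reduced Hilbert polynomials. I would argue by contradiction: if the pure sheaf $f^*E$ were not semistable, its Harder--Narasimhan filtration would furnish a unique maximal destabilizing subsheaf $F_{\max}$ with $p(F_{\max})>p(f^*E)$. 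Since each $g^*$ carries the Harder--Narasimhan filtration of $f^*E$ to that of $g^*f^*E=f^*E$, uniqueness forces $g^*F_{\max}=F_{\max}$ for all $g$, so $F_{\max}$ is a subobject in the category $\Coh^G(X)$ of $G$-equivariant sheaves.

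I would then invoke descent along the free quotient: $f^*$ induces an equivalence $\Coh(Y)\simeq\Coh^G(X)$, under which $F_{\max}\hookrightarrow f^*E$ corresponds to a genuine subsheaf $F\hookrightarrow E$ with $f^*F\cong F_{\max}$, whence Lemma \ref{rHpPb} gives $p(F)=p(F_{\max})>p(f^*E)=p(E)$, contradicting the semistability of $E$. The main obstacles are precisely the two ingredients special to this half: confirming that the Harder--Narasimhan formalism (existence and uniqueness of the maximal destabilizing subsheaf) is available not only for Gieseker but also for twisted and $(H,A)$-stability, so that the uniqueness-forced $G$-invariance argument runs in all three cases; and verifying the descent equivalence $\Coh(Y)\simeq\Coh^G(X)$ for the free action, which is what lets the $G$-invariant $F_{\max}$ be realised on $Y$. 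It is also worth remarking why only one implication holds for stability: $f^*E$ may decompose into stable summands permuted by $G$, so $E$ stable need not force $f^*E$ stable, which is exactly the phenomenon analysed in the later sections.
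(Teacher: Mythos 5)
Your proposal is correct and follows essentially the same route as the paper: the easy direction by pulling back subsheaves and applying Lemma \ref{rHpPb}, and the converse by contradiction via the maximal destabilising (Harder--Narasimhan) subsheaf, whose uniqueness forces $G$-invariance, followed by descent along the free quotient — which is exactly what the paper extracts from \cite[\S 7 Proposition 2]{Mum70} in place of your formulation as the equivalence $\Coh(Y)\simeq\Coh^G(X)$. The two caveats you flag (availability of the Harder--Narasimhan formalism for the twisted and $(H,A)$ notions, and the descent equivalence for free actions) are indeed the ingredients the paper relies on implicitly.
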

\begin{proof}
The pureness condition is given by Lemma \ref{PurePbInv}.
Let $f^*E$ be (semi)stable, and let $F\subseteq E$ be a nontrivial proper subsheaf.
Then $f^*F\subseteq f^*E$ is a nontrivial proper subsheaf, hence by (semi)stability one has 
$p(f^*F)\lel p(f^*E)\,.$
Lemma \ref{rHpPb} yields $p(F)\lel p(E)\,,$ thus $E$ is (semi)stable.

In order to prove the other direction, let $E$ be semistable and assume that $f^*E$ is not semistable.
Let $F\subseteq f^*E$ be a maximal destabilising subsheaf, i.e.\ $F\subseteq f^*E$ is the first part of the Harder-Narasimhan filtration with respect to the considered semistability notion. 
$f^*E$ is a $G$-sheaf in the sense of \cite[\S 7]{Mum70}. As $F$ is maximal, it is also $G$-invariant and thus a $G$-subsheaf of $f^*E$.
By \cite[\S 7 Proposition 2]{Mum70} one has $F\cong f^*\left((f_*F)^G\right)$ and $\left(f_*f^*E\right)^G \cong E$. In particular, $(f_*F)^G$ is isomorphic to a subsheaf of $E$.
The semistability of $E$ yields
$p((f_*F)^G) \le p(E)\,,$ and Lemma \ref{rHpPb} then
$p(F) \le  p(f^*E)\,,$
which contradicts the assumption. Thus $f^*E$ is semistable.
\end{proof}

\noindent Clearly, if $E$ is strictly semistable, i.e.\ semistable but not stable, then $f^*E$ is strictly semistable. 
The converse does not hold for stability: If $E$ is stable then $f^*E$ need not be stable. The following lemma and proposition should be well-known at least for Gieseker stability.

\begin{lemma}\label{StabDestab}
Let $E$ be semistable, and $F_1$ and $F_2$ two destabilising subsheaves with $F_1\cap F_2\neq 0$. Then $p(F_1+F_2)=p(E)$.

Moreover, if $F_1$ is stable, then $F_1\cap F_2 = F_1$ and $F_1 + F_2= F_2$. If $F_2$ is stable as well, then $F_1=F_2$.
\end{lemma}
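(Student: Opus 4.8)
The plan is to run the standard ``sum and intersection'' computation. I would start from the short exact sequence
$$0 \to F_1 \cap F_2 \to F_1 \oplus F_2 \to F_1 + F_2 \to 0$$
of subsheaves of $E$ and exploit additivity of the relevant Hilbert polynomial and of the multiplicity. Note first that, $E$ being semistable, ``destabilising'' forces $p(F_i) = p(E)$ (equality rather than strict inequality), and that purity of $E$ makes every nonzero subsheaf have dimension $d := \dim E$; in particular $F_1 \cap F_2$ (nonzero by hypothesis) and $F_1 + F_2$ are again $d$-dimensional, so all four multiplicities are positive and all four reduced polynomials live in the same ordered polynomial space.

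Writing $P$ for the ordinary, twisted, or two-variable Hilbert polynomial appropriate to the notion at hand and $\alpha$ for the associated (positive, additive) multiplicity, so that $p = P/\alpha$, the sequence yields $P(F_1\cap F_2) + P(F_1+F_2) = P(F_1) + P(F_2)$ and $\alpha(F_1\cap F_2) + \alpha(F_1+F_2) = \alpha(F_1) + \alpha(F_2)$. Setting $p := p(E) = p(F_1) = p(F_2)$ and substituting $P(F) = \alpha(F)\,p(F)$, I would rearrange these into the single identity
$$\alpha(F_1\cap F_2)\bigl(p - p(F_1\cap F_2)\bigr) = \alpha(F_1+F_2)\bigl(p(F_1+F_2) - p\bigr).$$
Since $F_1\cap F_2 \subseteq F_1 \subsetneq E$ and $F_1 + F_2 \subseteq E$ (the latter either proper, or equal to $E$, in which case $p(F_1+F_2)=p$ is automatic), semistability gives $p(F_1\cap F_2) \le p$ and $p(F_1+F_2) \le p$. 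Thus the left-hand side is $\ge 0$ and the right-hand side is $\le 0$; being equal, both vanish, and dividing by the positive multiplicities yields $p(F_1+F_2) = p = p(F_1\cap F_2)$. This settles the first claim, and incidentally shows that $F_1\cap F_2$ is destabilising as well.

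For the second part I would invoke the stability of $F_1$. As $F_1\cap F_2$ is a nonzero subsheaf of $F_1$ with $p(F_1\cap F_2) = p = p(F_1)$, stability of $F_1$ (which forces $p(G) < p(F_1)$ for every proper nonzero $G \subsetneq F_1$) prevents $F_1\cap F_2$ from being proper, so $F_1\cap F_2 = F_1$, i.e.\ $F_1 \subseteq F_2$ and $F_1+F_2 = F_2$. If $F_2$ is stable too, the symmetric argument gives $F_2 \subseteq F_1$, hence $F_1 = F_2$.

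The step I expect to need the most care is the order-theoretic one hidden in the ``both vanish'' deduction: it requires that the comparison attached to each notion makes the relevant polynomial space an ordered $\IQ$-vector space, so that from $\alpha(F_1\cap F_2)\,a = \alpha(F_1+F_2)\,b$ with $a \ge 0 \ge b$ and positive scalars one may conclude $a = b = 0$. For Gieseker and twisted stability the order is the lexicographic ordering of the coefficients, which is plainly translation invariant and compatible with positive scaling. For $(H,A)$-stability I would check the same for $\le_0$: because $f \mapsto (f(\bullet,0),-f)$ is linear and the lexicographic order on a product of ordered vector spaces is itself compatible with the vector space structure, $\le_0$ is translation invariant and respects multiplication by positive rationals, which is exactly what the sign argument needs.
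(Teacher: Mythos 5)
Your proof is correct, and it shares the paper's backbone --- both arguments turn on the exact sequence $0 \to F_1\cap F_2 \to F_1\oplus F_2 \to F_1+F_2 \to 0$ and the additivity of Hilbert polynomials and multiplicities, and the second halves (using stability of $F_1$ to force $F_1\cap F_2=F_1$) are identical --- but your middle step takes a genuinely different route. The paper first notes that $F_1$, $F_2$, and hence $F_1\oplus F_2$ are semistable with reduced polynomial $p(E)$, then squeezes the chain $p(F_1\oplus F_2)\le p(F_1+F_2)\le p(E)$, where the first inequality uses that $F_1+F_2$ is a \emph{quotient} of the semistable sheaf $F_1\oplus F_2$; equality everywhere is then pushed through the exact sequence to get $p(F_1\cap F_2)=p(E)$. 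You never invoke semistability of the direct sum or the quotient-side inequality: instead you apply the subsheaf inequality for $E$ to both $F_1\cap F_2$ and $F_1+F_2$ and close with the sign identity $\alpha(F_1\cap F_2)\bigl(p-p(F_1\cap F_2)\bigr)=\alpha(F_1+F_2)\bigl(p(F_1+F_2)-p\bigr)$, whose two sides have opposite signs and hence vanish. What this buys is a more self-contained and elementary argument: the only inputs are the definition of semistability of $E$ itself, purity (so all four sheaves have dimension $\dim E$ and multiplicities add), and compatibility of the orderings (coefficient-lexicographic and $\le_0$) with the $\IQ$-vector space structure --- a point you rightly single out and verify, and which is tacitly needed on the paper's route as well, since the facts it uses (direct sums of semistables with equal reduced polynomial are semistable; quotients of semistables have reduced polynomial at least $p$) require the same order-theoretic checks for the twisted and $(H,A)$ notions. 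The paper's version is shorter once those standard facts are granted; yours is the one to prefer if one wants the lemma to follow from the bare definitions for all three stability notions simultaneously.
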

\begin{proof}
As $F_1$ and $F_2$ are destabilising subsheaves, one has $p(F_1)=p(E)=p(F_2)$.
In particular, $F_1$ and $F_2$ are both semistable, as well as $F_1\oplus F_2$.
Hence one gets the inequality chain $p(F_1\oplus F_2)\le p(F_1+F_2) \le p(E)$,
and together with $p(E)=p(F_1\oplus F_2)$ one has equality everywhere.
The exact sequence
$$0\longrightarrow F_1\cap F_2 \longrightarrow F_1\oplus F_2 \longrightarrow F_1 + F_2 \longrightarrow 0$$
yields
$p(F_1\cap F_2)=p(F_1\oplus F_2)=p(F_1)\,.$
If $F_1$ is stable, then this implies $F_1\cap F_2 = F_1$ and $F_1 + F_2= F_2$.
If $F_2$ is stable as well, then $F_2\cap F_1 = F_2$ as we just proved.
\end{proof}

\begin{proposition}\label{PStGshDe}
Let $E'$ be a semistable sheaf on $X$ and $(F_i)_{i\in I}$ a family of destabilising stable subsheaves of $E'$.
Then there is a subset $J\subseteq I$ such that $E'\supseteq\sum_{i\in I} F_i=\oplus_{i\in J} F_i$.

In particular, if $E'$ is a semistable $G$-sheaf and $F\subseteq E'$ is a destabilising stable subsheaf, then one has $E'\supseteq\sum_{g\in G} g^*F=\oplus_{g\in G'} g^*F$ for a suitable subset $G'\subseteq G$.
\end{proposition}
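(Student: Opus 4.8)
The plan is to argue by a maximality principle of matroid type, in which directness of the sum plays the role of linear independence and the essential input is the rigidity of stable destabilising subsheaves provided by Lemma~\ref{StabDestab}. I first record the exact form in which I shall use that lemma. Since $E'$ is semistable, each $F_i$ is a subsheaf with $p(F_i)=p(E')$; more generally, for \emph{any} destabilising subsheaf $N\subseteq E'$ and any stable destabilising subsheaf $F\subseteq E'$, Lemma~\ref{StabDestab} applied with $F_1=F$ and $F_2=N$ shows that $F\cap N\neq 0$ forces $F\cap N=F$, i.e.\ $F\subseteq N$. Thus a stable destabilising subsheaf is either contained in $N$ or meets it in $0$.

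Next I would construct the index set $J$. Consider those subsets $J\subseteq I$ for which $\sum_{i\in J}F_i=\oplus_{i\in J}F_i$, i.e.\ for which the partial sum is direct. Any such $J$ is finite: additivity of the multiplicity along the inclusion $\oplus_{i\in J}F_i\subseteq E'$ gives $\sum_{i\in J}\alpha^H(F_i)\le\alpha^H(E')$ with every $\alpha^H(F_i)>0$, so $|J|$ is bounded (equivalently one invokes the ascending chain condition on subsheaves of the coherent sheaf $E'$). As the empty subset qualifies, I may fix a subset $J$ that is maximal, say of maximal cardinality, with respect to this directness property.

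It then remains to check that this maximal $J$ already captures the whole sum. Put $N:=\oplus_{i\in J}F_i\subseteq E'$; being a finite direct sum of stable sheaves all of reduced Hilbert polynomial $p(E')$, it is semistable with $p(N)=p(E')$, hence a destabilising subsheaf. Let $k\in I$ be arbitrary. By the dichotomy of the first paragraph, either $F_k\cap N\neq 0$, and then $F_k\subseteq N$; or $F_k\cap N=0$, in which case $\sum_{i\in J\cup\{k\}}F_i$ is again direct, so maximality of $J$ forces $k\in J$ and again $F_k\subseteq N$. In all cases $F_k\subseteq N$, whence $\sum_{i\in I}F_i\subseteq N$; the reverse inclusion being obvious, one obtains $\sum_{i\in I}F_i=\oplus_{i\in J}F_i$, as desired.

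For the ``in particular'' statement I would simply apply the result to the family $(g^*F)_{g\in G}$. Because $f\circ g=f$ for $g\in G$, the automorphism $g$ of $X$ fixes the pulled-back data $f^*H$ (and $f^*V$, $f^*A$), so $g^*$ preserves stability and reduced Hilbert polynomials; and since $E'$ is a $G$-sheaf, $g$ identifies $g^*E'$ with $E'$, so each $g^*F$ is a stable subsheaf of $E'$ with $p(g^*F)=p(F)=p(E')$, i.e.\ a destabilising stable subsheaf. The general statement then yields a subset $G'\subseteq G$ with $\sum_{g\in G}g^*F=\oplus_{g\in G'}g^*F$. I expect the only genuine obstacle to lie in the third step, namely the place where stability of $F_k$ is played off against the merely destabilising (and in general non-stable) sheaf $N$, which is precisely what the second assertion of Lemma~\ref{StabDestab} supplies.
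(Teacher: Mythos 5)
Your proof is correct and follows essentially the same route as the paper: the whole argument rests on the dichotomy from Lemma~\ref{StabDestab} (a stable destabilising subsheaf either meets a destabilising subsheaf in $0$ or is contained in it, the latter holding even when the second sheaf is not stable), and your maximal-direct-subfamily argument is just a cleaner packaging of the paper's induction. The only additions are welcome bits of care the paper glosses over: the finiteness of any direct subfamily (via multiplicity or ACC), which makes the infinite index set unproblematic, and the explicit verification that each $g^*F$ is again a destabilising stable subsheaf of the $G$-sheaf $E'$.
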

\begin{proof}
This follows from Lemma \ref{StabDestab} by induction. More precisely, let $F_{1,2}$ be two destabilising stable subsheaves of $E'$. Then either $F_1\cap F_2=0$, i.e.\ $F_1+F_2=F_1\oplus F_2$, or $F_1\cap F_2\neq 0$ and thus $F_1+F_2=F_2$ by Lemma \ref{StabDestab}. This also holds if $F_2$ is not stable, which proves the inductive step.
\end{proof}

\noindent In the preceding Proposition \ref{PStGshDe} the pullbacks $f^*H$, $f^*A$ and $f^*V$, which are hidden in the notation, can be replaced by ample divisors $H'$ and $A'$ on $X$ and a locally free sheaf $V'$ on $X$, respectively.

\begin{proposition}\label{PbStabPStab}
If $E$ is stable then $f^*E\cong \oplus_{g\in G'} g^*F$ for a destabilising stable subsheaf $F\subset f^*E$ and a suitable subset $G'\subseteq G$.
In particular, $f^*E$ is polystable.
One has
$$\Ext^k(f^*E,f^*E)\cong\left(\bigoplus_{h\in G'}\Ext^k(F,h^*F)\right)^{|G'|}$$
for all $k$, and 
$\Hom(f^*E,f^*E)\cong\IC^{|G'|\cdot |G''|}\,,$
where $G'':= \{ g\in G' \;|\; g^*F\cong F \}$. In particular, 
$f^*E$ is stable if and only if it is simple.
\end{proposition}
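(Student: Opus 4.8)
The plan is to first produce the stable subsheaf $F$ together with the decomposition, then to pin down the combinatorics of the $G$-translates of $F$, and finally to read off the $\Ext$-groups.

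First I would invoke Proposition \ref{PbStab}: since $E$ is stable, $f^*E$ is semistable, and it carries its canonical structure of a $G$-sheaf (because $f\circ g=f$ for every $g\in G$ gives a linearisation $g^*f^*E\cong f^*E$). Taking $F\subseteq f^*E$ to be the first step of a Jordan--H\"older filtration, $F$ is stable with $p(F)=p(f^*E)$, hence a destabilising stable subsheaf, and Proposition \ref{PStGshDe} yields $f^*E\supseteq\sum_{g\in G}g^*F=\oplus_{g\in G'}g^*F$ for a suitable $G'\subseteq G$. To upgrade the inclusion to an equality I would note that $\sum_{g\in G}g^*F$ is a $G$-invariant subsheaf of $f^*E$, so by the descent used in the proof of Proposition \ref{PbStab} (\cite[\S 7, Proposition 2]{Mum70}) it equals $f^*E''$ for a subsheaf $E''\subseteq E$; as each summand has reduced Hilbert polynomial $p(F)=p(E)$, Lemma \ref{rHpPb} gives $p(E'')=p(E)$, and the stability of $E$ forces $E''=E$. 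Thus $f^*E=\oplus_{g\in G'}g^*F$ is a direct sum of stable sheaves of equal reduced Hilbert polynomial, i.e.\ polystable.

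Next I would organise the summands. Each $g^*F$ is stable with $p(g^*F)=p(F)$, and $g^*F\cong g'^*F$ exactly when $gg'^{-1}$ lies in $S:=\{h\in G\mid h^*F\cong F\}$, which is a subgroup since $h\mapsto h^*$ is an autoequivalence. Grouping $\oplus_{g\in G'}g^*F$ into isotypic blocks $f^*E\cong\bigoplus_{i=1}^r S_i^{\oplus m_i}$, the linearisation makes $G$ permute the blocks, and since all $S_i$ are $G$-translates of $F$ this action is transitive; hence the $r$ classes form a single orbit (all of $S\backslash G$) and all multiplicities coincide, $m_1=\dots=m_r=m$. Consequently $G'$ meets each of the $r$ right cosets in exactly $m$ elements, so $|G'|=rm$ and $|G''|=|G'\cap S|=m$. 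For $k=0$, Schur's lemma gives $\Hom(F,h^*F)=\IC$ if $h\in S$ and $0$ otherwise, whence $\oplus_{h\in G'}\Hom(F,h^*F)=\IC^{|G''|}$.

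For the general $\Ext$-groups I would expand over the decomposition and untwist each summand by the autoequivalence $(g^{-1})^*=(g^*)^{-1}$, obtaining $\Ext^k(f^*E,f^*E)\cong\bigoplus_{g,g'\in G'}\Ext^k(F,(g'g^{-1})^*F)$. The main obstacle is that $G'$ need not be a subgroup, so the naive reindexing $h=g'g^{-1}$ does not land back in $G'$. I would resolve this by noting that $\Ext^k(F,h^*F)$ depends only on the coset $Sh$, and that for fixed $g$ right multiplication by $g^{-1}$ permutes $S\backslash G$; since $G'$ hits each coset exactly $m$ times, the multiset $\{\,Sg'g^{-1}\mid g'\in G'\,\}$ equals $\{\,Sh\mid h\in G'\,\}$. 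Hence the inner sum over $g'$ is isomorphic to $\oplus_{h\in G'}\Ext^k(F,h^*F)$ independently of $g$, and summing over the $|G'|$ choices of $g$ gives $\Ext^k(f^*E,f^*E)\cong\bigl(\oplus_{h\in G'}\Ext^k(F,h^*F)\bigr)^{|G'|}$, which at $k=0$ recovers $\Hom(f^*E,f^*E)\cong\IC^{|G'|\cdot|G''|}$. Finally, as $f^*E$ is polystable its endomorphism algebra is $\oplus_i M_{m_i}(\IC)$, which equals $\IC$ precisely when $r=1$ and $m=1$, i.e.\ when $f^*E$ is a single stable sheaf; therefore $f^*E$ is stable if and only if it is simple.
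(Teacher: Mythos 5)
Your proposal is correct, and its first half is exactly the paper's argument: take a destabilising stable subsheaf $F$ (the paper does not even specify the Jordan--H\"older construction), apply Proposition \ref{PStGshDe} to get $F':=\sum_{g\in G}g^*F=\oplus_{g\in G'}g^*F$, descend the $G$-invariant subsheaf $F'$ to $(f_*F')^G\subseteq E$ via \cite[\S 7 Proposition 2]{Mum70}, and use Lemma \ref{rHpPb} together with the stability of $E$ to force $(f_*F')^G\cong E$, hence $F'=f^*E$. Where you genuinely diverge is the $\Ext$ computation. The paper disposes of it in one chain,
$$\bigoplus_{g,h\in G'}\Ext^k(g^*F,h^*F)\cong\bigoplus_{g,h\in G'}\Ext^k(g^*F,h^*g^*F)\cong\Bigl(\bigoplus_{h\in G'}\Ext^k(F,h^*F)\Bigr)^{|G'|}\,,$$
leaving the reindexing implicit; this is transparent when $G$ is abelian (the situation of all later applications, where $f$ is cyclic), since then $h^*g^*F=g^*h^*F$ and applying the autoequivalence $g^*$ identifies the summands directly, but for non-abelian $G$ it requires justification, precisely because $G'$ is only a subset and untwisting by $(g^{-1})^*$ produces elements (conjugates, or cosets shifted on the wrong side) that need not lie in $G'$. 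You supply exactly the missing bookkeeping: the stabiliser subgroup $S=\{h\mid h^*F\cong F\}$, the bijection between isomorphism classes of translates and right cosets in $S\backslash G$, transitivity of the induced $G$-action on isotypic blocks forcing equal multiplicities (so $|G'|=rm$, $|G''|=m$), and invariance of the multiset of cosets met by $G'$ under right multiplication by $g^{-1}$. This buys a proof that is airtight in the stated generality of an arbitrary finite group acting freely, at the cost of extra combinatorics; the paper's version buys brevity and is fully convincing in the abelian case it ultimately uses. Your final step (the endomorphism algebra of a polystable sheaf is a sum of matrix algebras, which is $\IC$ iff there is one stable summand of multiplicity one) is a cosmetic variant of the paper's remark that $\hom(f^*E,f^*E)=|G'|\cdot|G''|>1$ whenever $|G'|\neq 1$; both are the same Schur-lemma fact.
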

\begin{proof}
Let $E$ be stable. By Proposition \ref{PbStab} $f^*E$ is semistable.
Let $F\subseteq f^*E$ be a destabilising stable subsheaf.
Then by Proposition \ref{PStGshDe} $f^*E\supseteq F':=\sum_{g\in G} g^*F=\oplus_{g\in G'} g^*F$ with $G'\subseteq G$ as in Proposition \ref{PStGshDe}.
As $F'$ is a $G$-subsheaf, by \cite[\S 7 Proposition 2]{Mum70} the sheaf $(f_*F')^G$ is isomorphic to a subsheaf of $E$, and
one has $F'\cong f^*\left((f_*F')^G\right)$. Applying Lemma \ref{rHpPb} to $p(F')=p(F)=p(f^*E)$
yields $p((f_*F')^G)=p(E)\,.$
Due to the stability of $E$, one has $(f_*F')^G\cong E$, hence $F'=f^*E$.
One has
\begin{eqnarray*}
\Ext^k(f^*E,f^*E)&\cong&\bigoplus_{g,h\in G'}\Ext^k(g^*F,h^*F)\\
&\cong&\bigoplus_{g,h\in G'}\Ext^k(g^*F,h^*g^*F)\cong \left(\bigoplus_{h\in G'}\Ext^k(F,h^*F)\right)^{|G'|}
\end{eqnarray*}
for all $k$.
As $\Hom(F,h^*F)=0$ unless $F\cong h^*F$, one has
\begin{eqnarray*}
\Hom(f^*E,f^*E)\cong\left(\bigoplus_{h\in G'}\Hom(F,h^*F)\right)^{|G'|}\cong\left(\bigoplus_{h\in G''}\Hom(F,h^*F)\right)^{|G'|}\cong\IC^{|G'|\cdot |G''|}\,.
\end{eqnarray*}
In particular, if $f^*E$ is not stable, then $|G'|\neq 1$, i.e.\ $f^*E$ is not simple. Conversely, any stable sheaf is always simple.
\end{proof}

\begin{corollary}\label{PbPStabPStab}
If $E$ is polystable then $f^*E$ is a polystable $G$-sheaf.
\end{corollary}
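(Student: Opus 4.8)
The plan is to reduce to Proposition \ref{PbStabPStab} by decomposing the polystable sheaf $E$ into its stable constituents. First I would write $E\cong\bigoplus_{i} E_i$, where each $E_i$ is stable and, by the very definition of polystability, shares the reduced Hilbert polynomial $p(E_i)=p(E)$. Since $f^*$ is exact and hence additive, one immediately obtains $f^*E\cong\bigoplus_{i} f^*E_i$.

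Next I would apply Proposition \ref{PbStabPStab} to each stable summand: it tells us that every $f^*E_i$ is polystable, in fact of the explicit form $\bigoplus_{g\in G_i'} g^*F_i$ for a destabilising stable subsheaf $F_i\subset f^*E_i$ and a suitable $G_i'\subseteq G$. Thus $f^*E$ is a direct sum of stable sheaves, and it remains only to check that all these summands carry one and the same reduced Hilbert polynomial, so that the sum is genuinely polystable (and not merely semistable). This follows from Lemma \ref{rHpPb}: each stable summand $g^*F_i$ satisfies $p(g^*F_i)=p(f^*E_i)=p(E_i)=p(E)=p(f^*E)$, so every stable constituent agrees with $p(f^*E)$. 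Hence $f^*E$ is polystable.

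Finally, the $G$-sheaf structure is automatic and requires no stability input: because $f$ is the quotient of $X$ by $G$, one has $f\circ g=f$ for every $g\in G$, and the resulting canonical isomorphisms $g^*f^*E\cong f^*E$ equip $f^*E$ with the structure of a $G$-sheaf in the sense of \cite[\S 7]{Mum70}, exactly as was already used in the proof of Proposition \ref{PbStab}. Combining this with the polystability established above yields the claim. I expect no genuine obstacle here; the only point demanding care is the bookkeeping of reduced Hilbert polynomials, namely ensuring that the stable constituents arising from the different summands $f^*E_i$ all coincide with $p(f^*E)$, which Lemma \ref{rHpPb} guarantees.
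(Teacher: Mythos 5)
Your proof is correct and follows exactly the route the paper intends: the corollary is stated without proof as an immediate consequence of Proposition \ref{PbStabPStab}, and your argument (decompose $E$ into stable summands, pull back each one, match reduced Hilbert polynomials via Lemma \ref{rHpPb}, and observe that the $G$-sheaf structure on $f^*E$ is automatic since $f\circ g=f$) is precisely the elaboration of that deduction. No gaps; the bookkeeping of reduced Hilbert polynomials is handled correctly.
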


\subsection{The pullback morphism}

We want to apply these results to moduli spaces of sheaves. We keep considering the three stability notions at once:
\begin{enumerate}
\item \textit{Gieseker stability.} The moduli space of Gieseker semistable sheaves with given Hilbert polynomial is standard by now. A detailed treatment can be found in \cite{HL10}.
\item \textit{Twisted stability.} The moduli space of twisted semistable sheaves with given twisted Hilbert polynomial is constructed in \cite[Section 4]{Yos03b}.
\item \textit{$(H,A)$-stability.} The moduli space of $(H,A)$-semistable sheaves with given Hilbert polynomial is constructed in \cite[Section 8]{Zow12}.
\end{enumerate}
Our main result in the general setting is the following:

\begin{theorem}\label{MHApb}
Let $P$ be a polynomial, $M_Y$ a quasiprojective and nonempty subscheme of the moduli space $M_Y(P)$ of semistable sheaves on $Y$ with (twisted) Hilbert polynomial $P$, $M_X=M_X(\deg f\cdot P)$ the moduli space of semistable sheaves on $X$ with (twisted) Hilbert polynomial $\deg f\cdot P$, and $M_Y^s$ and $M_X^s$ the respective loci of stable sheaves.
The pullback by $f$ induces a morphism $f^*\colon M_Y\to M_X$ which maps closed points $[E]$ to $[f^*E]$.
The closed points of its image are represented by polystable $G$-sheaves, and $(f^*)^{-1}( M_X^s )\subseteq M_Y^s\,.$
\end{theorem}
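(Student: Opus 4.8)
The plan is to realise $f^*$ through the functorial description of the moduli spaces, deferring all sheaf-theoretic content to the propositions already established. Throughout I use that $f$ is finite \'etale, so that $f\times\mathrm{id}_S\colon X\times S\to Y\times S$ is again finite flat for every base scheme $S$, and that by (the proof of) Lemma \ref{rHpPb} pullback multiplies the (twisted) Hilbert polynomial by $\deg f$, which accounts for the target $M_X(\deg f\cdot P)$. Recall that $M_Y(P)$ only corepresents the moduli functor $\underline{M}_Y(P)$ of flat families of semistable sheaves with (twisted) Hilbert polynomial $P$ modulo the usual equivalence, and likewise for $M_X$. Given a scheme $S$ and a flat family $\mathcal{E}$ on $Y\times S$ with semistable fibres of Hilbert polynomial $P$, I would form $(f\times\mathrm{id}_S)^*\mathcal{E}$ on $X\times S$. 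This is $S$-flat because $f\times\mathrm{id}_S$ is flat, and its fibres over $S$ are exactly the pullbacks by $f$ of the fibres of $\mathcal{E}$, hence semistable with Hilbert polynomial $\deg f\cdot P$ by Proposition \ref{PbStab} and Lemma \ref{rHpPb}. This assignment is compatible with base change and with tensoring by a line bundle pulled back from $S$, so it descends to a natural transformation $\underline{M}_Y(P)\to\underline{M}_X(\deg f\cdot P)$ of moduli functors. The universal property of corepresentation then yields a unique morphism of schemes $M_Y(P)\to M_X(\deg f\cdot P)$, and restricting to the subscheme $M_Y$ gives $f^*\colon M_Y\to M_X$.

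Next I would identify the map on closed points. These are $S$-equivalence classes of semistable sheaves, each represented by the polystable sheaf $\mathrm{gr}(E)=\bigoplus_i G_i$ obtained from a Jordan-H\"older filtration with stable factors $G_i$. Since $f^*$ is exact, pulling back such a filtration gives a filtration of $f^*E$ with factors $f^*G_i$, and by Proposition \ref{PbStabPStab} each $f^*G_i$ is polystable; refining accordingly shows $\mathrm{gr}(f^*E)\cong f^*\big(\mathrm{gr}(E)\big)$. Hence $S$-equivalent sheaves on $Y$ pull back to $S$-equivalent sheaves on $X$, the morphism sends $[E]$ to $[f^*E]$ on closed points, and by Corollary \ref{PbPStabPStab} the representative $f^*(\mathrm{gr}(E))$ of the image point is a polystable $G$-sheaf. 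This settles the first two assertions.

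Finally, for $(f^*)^{-1}(M_X^s)\subseteq M_Y^s$ I would argue by contraposition on closed points, which suffices because $(f^*)^{-1}(M_X^s)$ and $M_Y^s$ are open subschemes of the quasiprojective $M_Y$ and an inclusion of open subschemes can be tested on the underlying dense set of closed points. If $[E]\in M_Y\setminus M_Y^s$, then $E$ is strictly semistable, whence $f^*E$ is strictly semistable by the remark following Proposition \ref{PbStab}; thus $f^*([E])=[f^*E]$ lies in $M_X\setminus M_X^s$, i.e.\ $[E]\notin(f^*)^{-1}(M_X^s)$.

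I expect the genuine obstacle to be the scheme-theoretic step rather than the sheaf theory: upgrading the evident set-level map $[E]\mapsto[f^*E]$ to an honest morphism of schemes. The subtlety is that the moduli spaces are only coarse, so that they corepresent but do not represent the functor, there being no universal family to pull back; instead one must check that pullback of families is functorial, flat over the base, and well-defined modulo the functor's equivalence, and then invoke the universal property of corepresentation. Verifying $S$-flatness of $(f\times\mathrm{id}_S)^*\mathcal{E}$ and the preservation of the Hilbert polynomial uniformly in families is precisely where one uses that $f$ is finite \'etale rather than merely finite.
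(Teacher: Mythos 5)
Your proof is correct and takes essentially the same route as the paper's: pullback of flat families yields a natural transformation of moduli functors, corepresentation then gives the morphism $f^*$, the polystable $G$-sheaf statement follows from Corollary \ref{PbPStabPStab}, and the stable-locus inclusion from Proposition \ref{PbStab}. The extra verifications you supply (flatness of the pulled-back family, compatibility with S-equivalence via Jordan--H\"older filtrations, and reduction to closed points) are details the paper leaves implicit rather than a different argument.
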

\begin{proof}
Let $F\in\Coh(Y\times S)$ be a flat family of semistable sheaves on $Y$ with (twisted) Hilbert polynomial $P$, which is parametrised by a scheme $S$.
Then $(f\times\mathrm{id}_S)^*F\in\Coh(X\times S)$ is a flat family of semistable sheaves on $X$ with (twisted) Hilbert polynomial $\deg f\cdot P$ by Proposition \ref{PbStab}.
Hence one has a natural transformation between the moduli functors, which induces a morphism $f^*\colon M_Y\to M_X$.
If $[E]$ is a closed point represented by a polystable sheaf $E$ on $Y$ then $f^*E$ is a polystable $G$-sheaf by Corollary \ref{PbPStabPStab}.
The statement on the stable locus follows also from Proposition \ref{PbStab}.
\end{proof}

For simplicity we restrict to simple cyclic coverings for the rest of this article.

\section{Cyclic coverings}\label{arbCC}

We keep all notations and assumptions as before. Additionally, for the whole section, let $f$ be a cyclic covering given by a line bundle $L$ on $Y$ of finite order $n$, and let $\nu$ be the order of $\bc_1(L)$.
Moreover, let $M_Y\subseteq M_Y(P)$ be a nonempty quasiprojective subscheme containing classes of sheaves of rank $r$ and $m\big|\frac n{\gcf(n,r)}$ such that the morphism $\varphi\colon M_Y\to M_Y$ induced by $\otimes L^{\otimes m}$ is well-defined. We denote the stable locus by $M_Y^s$.
We are interested in the following particular examples:

\begin{example}\label{Mexa}
\begin{enumerate}
\item $M_Y=M_Y(P)$ and $m=1$. As the Hilbert polynomial of a sheaf is invariant under $\otimes L$, the morphism $\varphi$ is well-defined.
\item $M_Y$ is the subscheme of $M_Y(P)$ containing classes of sheaves with fixed determinant and $m=\frac n{\gcf(n,r)}$.
\item $M_Y$ is the subscheme of $M_Y(P)$ containing classes of sheaves with fixed first Chern class and $m=\frac {\nu}{\gcf(\nu,r)}$.
\end{enumerate}
\end{example}

\noindent The following two Lemmas \ref{PreimDiv2} and \ref{PreimDiv3} ensure that $\varphi$ is well-defined in the cases 2 and 3.
\begin{lemma}\label{PreimDiv2}
Let $E$ be a coherent sheaf on $Y$. Then
$\det(E)\cong \det(E\otimes L^{j})$ if and only if $\frac n{\gcf(n,\rk E)} \big| j$.
In particular, the number of nonisomorphic sheaves of the form $E\otimes L^{j}$ with $\det(E)=\det(E\otimes L^{j})$ is equal to $\frac {\ord_E(L)\;\gcf(n,\rk E)}n$.
\end{lemma}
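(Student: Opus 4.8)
The plan is to reduce everything to the determinant formula for tensoring by a line bundle together with an elementary divisibility computation. Write $r=\rk E$ and recall that for the line bundle $L$ one has $\det(E\otimes L^{j})\cong\det(E)\otimes L^{\otimes jr}$. Hence $\det(E)\cong\det(E\otimes L^{j})$ holds if and only if $L^{\otimes jr}\cong\cO_Y$, which, since $L$ has order $n$, is equivalent to $n\mid jr$. Setting $d:=\gcf(n,r)$ and writing $n=d\,s$, $r=d\,r'$ with $\gcf(s,r')=1$, the condition $n\mid jr$ becomes $s\mid jr'$, hence $s\mid j$ by coprimality; this gives the first claim, since $s=\frac{n}{\gcf(n,\rk E)}$.

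For the counting statement I would set $t:=\ord_E(L)$, the smallest positive integer with $E\otimes L^{\otimes t}\cong E$, and note $t\mid n$. The isomorphism $E\otimes L^{\otimes j}\cong E\otimes L^{\otimes j'}$ holds precisely when $t\mid(j-j')$, so the sheaves $E\otimes L^{\otimes j}$ fall into exactly $t$ isomorphism classes, indexed by $j$ modulo $t$. The key preliminary observation is that the property $\det(E\otimes L^{\otimes j})\cong\det(E)$ descends to these classes: applying the first part of the lemma to the isomorphism $E\otimes L^{\otimes t}\cong E$ yields $s\mid t$, and therefore the condition $s\mid j$ depends only on $j$ modulo $t$.

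It then remains to count the residues $j\in\{0,\dots,t-1\}$ satisfying $s\mid j$. Since $s\mid t$, these are exactly $0,s,2s,\dots,(\tfrac{t}{s}-1)s$, so there are $\frac{t}{s}=\frac{\ord_E(L)\,\gcf(n,\rk E)}{n}$ of them, which is the asserted number.

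The only genuinely delicate point is the well-definedness of the determinant condition on isomorphism classes; this is exactly what forces the auxiliary divisibility $s\mid t$, obtained by feeding $E\otimes L^{\otimes t}\cong E$ back into the first part of the lemma. Everything else is the determinant formula and elementary arithmetic of $\gcf$, so no serious obstacle is expected.
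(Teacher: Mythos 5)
Your proof is correct and follows essentially the same route as the paper: the determinant formula $\det(E\otimes L^{j})\cong\det(E)\otimes L^{\otimes j\rk E}$ for the equivalence, then counting the multiples of $\frac{n}{\gcf(n,\rk E)}$ among the residues $0\le j<\ord_E(L)$. Your explicit justification that $\frac{n}{\gcf(n,\rk E)}\mid\ord_E(L)$ (by feeding $E\otimes L^{\otimes\ord_E(L)}\cong E$ back into the first part) is a detail the paper leaves implicit here and only records later as Lemma \ref{PreimDiv4}, so it is a welcome clarification rather than a deviation.
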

\begin{proof}
One has $\det(E\otimes L^{j})\cong \det(E)\otimes L^{\rk E \cdot j}$.
Hence $\det(E\otimes L^{j})\cong\det(E)$ if and only if $L^{\rk E \cdot j}\cong\cO_Y$.
This is equivalent to $n\big|\rk E\cdot j$, i.e.\ $\frac n{\gcf(n,\rk E)}\big|j$.

The nonisomorphic sheaves of the form $E\otimes L^{j}$ with $\det(E)\cong \det(E\otimes L^{j})$ are (up to isomorphisms) precisely those with $0\le j<\ord_E(L)$ and $j$ a multiple of $\frac n{\gcf(n,\rk E)}$, and there are $\frac {\ord_E(L)\,\gcf(n,\rk E)}n$ such choices.
\end{proof}

\noindent Analogously one proves the following:
\begin{lemma}\label{PreimDiv3}
Let $E$ be a coherent sheaf on $Y$. Then
$\bc_1(E)=\bc_1(E\otimes L^{j})$ if and only if $\frac {\nu}{\gcf(\nu,\rk E)} \big| j$.
In particular, the number of nonisomorphic sheaves of the form $E\otimes L^{j}$ with $\bc_1(E)=\bc_1(E\otimes L^{j})$ is equal to $\frac {\ord_E(L)\;\gcf(\nu,\rk E)}{\nu}$.
\end{lemma}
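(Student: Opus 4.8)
The plan is to mirror the proof of Lemma \ref{PreimDiv2}, replacing the multiplicative relation for determinants by the additive relation for first Chern classes. The key input is the standard formula $\bc_1(E\otimes L^{j})=\bc_1(E)+\rk(E)\cdot j\cdot\bc_1(L)$, which here plays the role of the isomorphism $\det(E\otimes L^{j})\cong\det(E)\otimes L^{\rk E\cdot j}$ used in the determinant case. From this, $\bc_1(E\otimes L^{j})=\bc_1(E)$ holds if and only if $\rk(E)\cdot j\cdot\bc_1(L)=0$. Since $\nu$ is by definition the order of $\bc_1(L)$, this is equivalent to $\nu\big|\rk(E)\cdot j$, and the same elementary manipulation as before yields $\frac{\nu}{\gcf(\nu,\rk E)}\big| j$, exactly the assertion of the first part with $n$ replaced by $\nu$.

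For the counting statement, I would recall that up to isomorphism the sheaves $E\otimes L^{j}$ are parametrised by the residues of $j$ modulo $\ord_E(L)$, so it suffices to count those $j$ with $0\le j<\ord_E(L)$ satisfying $\frac{\nu}{\gcf(\nu,\rk E)}\big| j$. These are precisely the multiples of $\frac{\nu}{\gcf(\nu,\rk E)}$ lying in that range, and their number equals $\frac{\ord_E(L)\,\gcf(\nu,\rk E)}{\nu}$.

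The only point requiring a moment's care—and the closest thing to an obstacle—is verifying that this count is genuinely an integer, i.e.\ that $\frac{\nu}{\gcf(\nu,\rk E)}$ divides $\ord_E(L)$ so that the multiples below $\ord_E(L)$ are evenly spaced. But this is automatic: from $E\otimes L^{\ord_E(L)}\cong E$ one obtains $\bc_1(E\otimes L^{\ord_E(L)})=\bc_1(E)$, which by the first part of the lemma forces $\frac{\nu}{\gcf(\nu,\rk E)}\big|\ord_E(L)$. Hence the number of admissible residues is exactly $\frac{\ord_E(L)}{\nu/\gcf(\nu,\rk E)}=\frac{\ord_E(L)\,\gcf(\nu,\rk E)}{\nu}$, as claimed. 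Since the argument is formally identical to that of Lemma \ref{PreimDiv2}, the paper's remark that it ``analogously'' follows is entirely justified.
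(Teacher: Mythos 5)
Your proof is correct and is exactly the argument the paper intends when it says the lemma is proven ``analogously'' to Lemma \ref{PreimDiv2}: replace the determinant relation by $\bc_1(E\otimes L^{j})=\bc_1(E)+\rk(E)\,j\,\bc_1(L)$, reduce to the divisibility $\nu\mid\rk(E)\,j$, and count the admissible residues $j$ modulo $\ord_E(L)$. Your additional check that $\frac{\nu}{\gcf(\nu,\rk E)}$ divides $\ord_E(L)$ (via $E\otimes L^{\ord_E(L)}\cong E$) is a welcome explicit verification of a point the paper leaves implicit (cf.\ Lemma \ref{PreimDiv4}).
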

\noindent  Moreover, these two Lemmas \ref{PreimDiv2} and \ref{PreimDiv3} ensure that $\frac {\nu}{\gcf(\nu,r)}\big|\frac n{\gcf(n,r)}$, as equal determinants imply equal first Chern classes.

\subsection{The pullback morphism factorises}

The morphism between the moduli spaces induced by the pullback by $f$ factorises as follows:

\begin{proposition}\label{pbFact}
There is a commutative diagram
\begin{center}
\ \xymatrix{
M_Y \ar[rr]^{f^*} \ar@{->>}[rd]^{\pi} && M_X(nP) \\
&M_Y/\langle\varphi\rangle \ar[ru]^{\vartheta} 
}\ 
\end{center}
with $f^*(M_Y)=\vartheta(M_Y/\langle\varphi\rangle)$.
If we choose one of the Examples \ref{Mexa} then $\vartheta|_{M_Y^s/\langle \varphi \rangle}$ is injective.
\end{proposition}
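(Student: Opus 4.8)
The plan is to construct the maps $\pi$ and $\vartheta$ explicitly, verify commutativity and the image equality by unwinding definitions, and then prove the injectivity of $\vartheta|_{M_Y^s/\langle\varphi\rangle}$ using the structure theorem for $f^*E$ from Proposition \ref{PbStabPStab}. First I would observe that $f^*(E\otimes L^{\otimes m})\cong f^*E \otimes f^*L^{\otimes m}$, and that since $f$ is the cyclic covering associated to $L$, the pullback $f^*L$ is trivial (this is the defining property of the covering: $L^{\otimes n}\cong\cO_Y$ and $f$ trivialises $L$). Hence $f^*(E\otimes L^{\otimes m})\cong f^*E$, so $f^*\circ\varphi = f^*$ on closed points. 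This is precisely the statement that $f^*$ is constant on $\langle\varphi\rangle$-orbits, so it factors through the quotient $\pi\colon M_Y\twoheadrightarrow M_Y/\langle\varphi\rangle$, yielding a unique $\vartheta$ with $f^* = \vartheta\circ\pi$. The image equality $f^*(M_Y)=\vartheta(M_Y/\langle\varphi\rangle)$ is then formal, since $\pi$ is surjective.

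The substantive step is injectivity of $\vartheta$ on the stable locus. Suppose $[E_1],[E_2]\in M_Y^s$ with $\vartheta(\pi[E_1])=\vartheta(\pi[E_2])$, i.e.\ $f^*E_1\cong f^*E_2$ as semistable sheaves on $X$. I want to conclude $[E_2]=\varphi^k[E_1]$ for some $k$, i.e.\ $E_2\cong E_1\otimes L^{\otimes mk}$. By Proposition \ref{PbStabPStab}, since $E_1$ is stable, $f^*E_1\cong\bigoplus_{g\in G'} g^*F$ for a stable destabilising subsheaf $F$, and likewise $f^*E_2\cong\bigoplus_{g\in G''} g^{*}F'$. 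The key input is the relationship $(f_*f^*E)^G\cong E$ and $F\cong f^*((f_*F)^G)$ from \cite[\S 7 Proposition 2]{Mum70}, used already in Proposition \ref{PbStabPStab}. The Galois group $G=\langle\sigma\rangle$ is cyclic of order $n$, and its action on $f_*\cO_X\cong\bigoplus_{j=0}^{n-1}L^{\otimes -j}$ via characters is what links the $G$-isotypical decomposition of $f_*f^*E$ to the twists $E\otimes L^{\otimes j}$. Concretely I expect $f_*f^*E\cong\bigoplus_{j=0}^{n-1} E\otimes L^{\otimes j}$, with $G$ acting on the $j$-th summand through the corresponding character.

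From this, an isomorphism $f^*E_1\cong f^*E_2$ is an isomorphism of $G$-sheaves only up to twisting the $G$-linearisation by a character, and any two $G$-linearisations of the same underlying sheaf differ by a character of $G\cong\IZ/n$. Pushing forward and taking the $j$-th isotypical components, I would deduce that $E_2$ must be isomorphic to one of the summands $E_1\otimes L^{\otimes j}$ of $f_*f^*E_1$; that is, $E_2\cong E_1\otimes L^{\otimes j}$ for some $j$ with $0\le j<n$. Here is where the choice of one of the Examples \ref{Mexa} enters decisively: the constraints built into $M_Y$ force $j$ to be a multiple of $m$. For Example \ref{Mexa}(2), equality of determinants $\det E_2=\det(E_1\otimes L^{\otimes j})$ forces $\frac{n}{\gcf(n,r)}\mid j$ by Lemma \ref{PreimDiv2}, and $m=\frac{n}{\gcf(n,r)}$; for Example \ref{Mexa}(3), equality of first Chern classes forces $\frac{\nu}{\gcf(\nu,r)}\mid j$ by Lemma \ref{PreimDiv3}, with $m=\frac{\nu}{\gcf(\nu,r)}$; and Example \ref{Mexa}(1) with $m=1$ is automatic. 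In every case $m\mid j$, so $E_2\cong E_1\otimes L^{\otimes mk}$ and therefore $\pi[E_2]=\pi[E_1]$, giving injectivity.

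The main obstacle I anticipate is making precise the passage from an abstract isomorphism $f^*E_1\cong f^*E_2$ of underlying sheaves (which a priori ignores the $G$-structure) to the conclusion $E_2\cong E_1\otimes L^{\otimes j}$. The point is that $f^*E_1$ carries a canonical $G$-linearisation, but the isomorphism with $f^*E_2$ need not respect it; the correct statement is that the set of stable summands $\{g^*F\}$ and hence the recovered $G$-sheaf is determined only up to a twist by a character of $G$, and one must check that twisting the linearisation corresponds exactly to tensoring the descended sheaf by a power of $L$. This is a bookkeeping argument about $G$-linearisations and descent, relying on the freeness of the $G$-action (so that descent is unobstructed) and the stability of $E_1,E_2$ (so that the descended sheaf is uniquely determined, via the argument already used in Proposition \ref{PbStabPStab}). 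Once this correspondence is pinned down, the divisibility lemmas close the argument mechanically.
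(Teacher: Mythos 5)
Your construction of $\pi$ and $\vartheta$ and the image equality are correct and coincide with the paper's argument: $f^*L\cong\cO_X$ for the cyclic covering defined by $L$ (this holds at the level of families too), so $f^*$ is $\varphi$-invariant and factors through the quotient, and the image equality is formal from surjectivity of $\pi$. The closing divisibility step via Lemmas \ref{PreimDiv2} and \ref{PreimDiv3} is also exactly the paper's. The genuine gap is in the central step of injectivity: the passage from $f^*E_1\cong f^*E_2$ (with $E_1,E_2$ stable) to $E_2\cong E_1\otimes L^{\otimes j}$. You reduce this to the claim that any two $G$-linearisations of the same underlying sheaf differ by a character of $G$, so that all descents of $f^*E_1$ are the twists $E_1\otimes L^{\otimes j}$. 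That claim is valid only for simple sheaves, where $\mathrm{Aut}=\IC^*$ forces the discrepancy cocycle to be a character; but by Proposition \ref{PbStabPStab} the sheaf $f^*E_1$ is in general only polystable, and the properly polystable case is precisely the one at issue (when $f^*E_1$ is simple it is stable, and your character argument does work there). For non-simple sheaves the equivalence classes of linearisations form a nonabelian cohomology set, not a torsor under the character group, and descents genuinely need not be twists of one another: for a double covering, $\cO_X^{\oplus 2}$ has the pairwise non-isomorphic descents $\cO_Y^{\oplus 2}$, $\cO_Y\oplus L$ and $L^{\oplus 2}$. So the ``bookkeeping argument about $G$-linearisations'' you defer to in your last paragraph cannot be carried out in the generality you state it; you correctly flag it as the main obstacle, but closing it is the actual content of the step, and stability of the descents must enter in an essential way that your outline never uses.

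The paper closes exactly this step with Lemma \ref{PreimStab}, by a Hom computation rather than descent combinatorics: by the projection formula (Lemma \ref{CycShDec1}) one has $f_*f^*E_i\cong\oplus_{j=0}^{n-1}E_i\otimes L^{j}$, so $f^*E_1\cong f^*E_2$ gives $\oplus_j E_1\otimes L^{j}\cong\oplus_j E_2\otimes L^{j}$; hence $\Hom(E_1\otimes L^{j},E_2\otimes L^{k})\neq 0$ for some $j,k$, and a nonzero map between stable sheaves with the same (twisted) reduced Hilbert polynomial (Lemma \ref{rHpPb}) is an isomorphism, giving $E_2\cong E_1\otimes L^{j-k}$. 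Equivalently you could use adjunction, $\Hom(f^*E_1,f^*E_2)\cong\oplus_j\Hom(E_1,E_2\otimes L^{j})$, or apply the Krull--Schmidt property of coherent sheaves to the two decompositions into stable, hence indecomposable, summands. Any of these short arguments repairs your proof; without one of them it is incomplete.
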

\begin{proof}
As the morphism $f^*\colon M_Y\to M_X(nP)$ is $\varphi$-invariant, it factors through the quotient morphism $\pi$ and yields the morphism $\vartheta$ satisfying $f^*=\vartheta\circ\pi$.
$f^*(M_Y)=\vartheta(M_Y/\langle\varphi\rangle)$ follows from the surjectivity of $\pi$.
By Lemma \ref{PreimStab} below any preimage of $[f^*E]$ for $E\in M_Y^s$ is isomorphic to $E\otimes L^{j}$ for some $j$ with $0\le j<|G|$.
Thus $\vartheta|_{M_Y^s/\langle \varphi \rangle}$ is injective in the Examples \ref{Mexa}, using Lemmas \ref{PreimDiv2} and \ref{PreimDiv3} for the cases 2 and 3, respectively.
\end{proof}

\noindent If we choose Example \ref{Mexa}.2 and if $\gcf(n,r)=1$, then $m=n$, $\pi$ is an isomorphism, and $f^*|_{M_Y^s}$ is injective.

\begin{lemma}\label{PreimStab}
Let $E$ and $F$ be two stable sheaves on $Y$ with $f^*E\cong f^*F$.
Then there is a $j$ such that $F\cong E\otimes L^{j}$.
\end{lemma}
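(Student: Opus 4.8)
The plan is to recover $E$ and $F$ from the isomorphism $f^*E\cong f^*F$ by pushing forward along $f$ and then invoking uniqueness of direct-sum decompositions. Since $f$ is finite (hence $f_*$ is exact and additive) and étale cyclic of degree $n$, one has $f_*\cO_X\cong\bigoplus_{i=0}^{n-1}L^{-i}$, so by the projection formula
$$f_*f^*E\cong E\otimes f_*\cO_X\cong\bigoplus_{i=0}^{n-1}E\otimes L^{-i}\,,$$
and likewise for $F$. Applying $f_*$ to the given isomorphism therefore yields
$$\bigoplus_{i=0}^{n-1}E\otimes L^{-i}\;\cong\;\bigoplus_{i=0}^{n-1}F\otimes L^{-i}\,.$$

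Next I would observe that every summand on either side is indecomposable. Tensoring by the line bundle $L^{-i}$ is an autoequivalence of $\Coh(Y)$, so $E\otimes L^{-i}$ is simple because $E$, being stable, is simple; the same holds for each $F\otimes L^{-i}$. As all sheaves involved live on a projective scheme over $\IC$, their endomorphism algebras are finite-dimensional, so each summand has local endomorphism ring and the Krull--Schmidt theorem applies. Consequently the two decompositions into indecomposables agree up to permutation, i.e.\ the multiset $\{\,[E\otimes L^{-i}]:0\le i<n\,\}$ coincides with $\{\,[F\otimes L^{-i}]:0\le i<n\,\}$. (The precise convention for $f_*\cO_X$ is immaterial, since as $i$ ranges over $0,\dots,n-1$ the classes $[L^{-i}]$ and $[L^{i}]$ sweep out the same set, $L$ having order $n$.)

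Finally, since $E\cong E\otimes L^{0}$ occurs in the first multiset, it must occur in the second: there is some $j$ with $0\le j<n$ and $E\cong F\otimes L^{-j}$, that is, $F\cong E\otimes L^{j}$, as claimed.

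The only genuine subtlety is the verification of the hypotheses of Krull--Schmidt; everything else is formal manipulation with the projection formula. I expect the main obstacle to be making precise that each summand is indecomposable (equivalently, has a local endomorphism ring), which I reduce to the simplicity of $E$ via the autoequivalence $-\otimes L^{-i}$; once this is settled, the matching of the summands and the conclusion are immediate.
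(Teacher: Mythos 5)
Your proof is correct, and its first half coincides with the paper's: both push forward along $f$, identify $f_*f^*E\cong E\otimes f_*\cO_X\cong\oplus_{i=0}^{n-1}E\otimes L^{i}$ (your sign convention is indeed immaterial since $L$ has order $n$), and arrive at $\oplus_{i}E\otimes L^{i}\cong\oplus_{i}F\otimes L^{i}$. The two arguments then diverge in how they extract the conclusion from this isomorphism of direct sums. The paper finishes with an elementary Hom argument: the isomorphism forces $\Hom(\oplus_{j}E\otimes L^{j},\oplus_{k}F\otimes L^{k})\neq 0$, hence $\Hom(E\otimes L^{j},F\otimes L^{k})\neq 0$ for some $j,k$, and a nonzero homomorphism between stable sheaves with the same (twisted) Hilbert polynomial (tensoring by the torsion bundle $L$ leaves it unchanged) is automatically an isomorphism, giving $F\cong E\otimes L^{j-k}$. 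You instead invoke uniqueness of direct-sum decompositions: each summand is simple, hence indecomposable with endomorphism ring $\IC$, so the Krull--Schmidt--Azumaya theorem (for coherent sheaves on projective varieties this is Atiyah's theorem) matches the summands up to permutation, and matching the summand $E$ gives the claim. Your route requires a heavier citation, but it uses stability only through simplicity, so it in fact proves the stronger statement that the lemma holds for simple sheaves $E$ and $F$; the paper's route stays within the standard stability toolkit it uses throughout (nonzero maps between stable sheaves of equal reduced Hilbert polynomial are isomorphisms) and avoids Krull--Schmidt entirely. Both arguments are complete; if you keep yours, state precisely the uniqueness theorem you apply, noting that local endomorphism rings of the summands is exactly what simplicity provides.
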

\begin{proof}
By Lemma \ref{CycShDec1} below one has $f_*f^*E\cong \oplus_{j=0}^{n-1}E\otimes L^{j}$ and $f_*f^*F\cong \oplus_{j=0}^{n-1}F\otimes L^{j}$.
As we assume $f^*E\cong f^*F$, one has $\oplus_{j=0}^{n-1}E\otimes L^{j}\cong f_*f^*E\cong f_*f^*F\cong\oplus_{j=0}^{n-1}F\otimes L^{j}$.
Therefore $\Hom(\oplus_{j=0}^{n-1}E\otimes L^{j},\oplus_{k=0}^{n-1}F\otimes L^k)\cong \oplus_{j,k} \Hom(E\otimes L^j, F\otimes L^k)$ is nonzero, i.e.\ for some $j,k$ there is a nontrivial homomorphism $E\otimes L^j\to F\otimes L^k$ which must be an isomorphism because $E\otimes L^j$ and $F\otimes L^k$ are stable and have the same (twisted) Hilbert polynomial. This yields the isomorphism $E\otimes L^{j-k}\to F$.
\end{proof}

\begin{lemma}\label{CycShDec1}
Let $E$ be a coherent sheaf on $Y$. Then
$f_*f^*E\cong E\otimes f_*\cO_X\cong \oplus_{j=0}^{n-1}E\otimes L^{j}\,.$
\end{lemma}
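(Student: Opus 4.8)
The plan is to prove the two displayed isomorphisms separately, the middle one being the substantive one and the right-hand one being formal. For the first, $f_*f^*E\cong E\otimes f_*\cO_X$, I would use that $f$ is finite, hence affine. Writing $X=\mathrm{Spec}_Y\mathcal{A}$ with $\mathcal{A}=f_*\cO_X$, I would compute locally: over an affine open $U=\mathrm{Spec}\,R\subseteq Y$ with $f^{-1}(U)=\mathrm{Spec}\,S$, where $S=\mathcal{A}(U)$ is a finite $R$-algebra, and with $E|_U=\widetilde{M}$, the pullback $f^*E$ corresponds to the $S$-module $S\otimes_R M$, and pushing forward just reads this back as an $R$-module. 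Hence $(f_*f^*E)|_U=\widetilde{S\otimes_R M}=(E\otimes_{\cO_Y}f_*\cO_X)|_U$, and these identifications are natural in $U$, so they glue to the global isomorphism. I would stress that this is precisely the special case of the projection formula with second argument $\cO_X$; it holds for an arbitrary coherent (indeed quasicoherent) $E$ with no local-freeness hypothesis, exactly because $f$ is affine.

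For the middle isomorphism $f_*\cO_X\cong\oplus_{j=0}^{n-1}L^{j}$ I would invoke the construction of the cyclic covering attached to $L$. By definition $f\colon X\to Y$ is $X=\mathrm{Spec}_Y\mathcal{A}$ with $\mathcal{A}=\bigoplus_{j=0}^{n-1}L^{-j}$, the $\cO_Y$-algebra structure coming from a chosen trivialisation $L^{\otimes n}\cong\cO_Y$ (this is where the order-$n$ hypothesis enters, making the covering finite and unbranched). Thus $f_*\cO_X=\bigoplus_{j=0}^{n-1}L^{-j}$. Since $L$ has order $n$ one has $L^{-j}\cong L^{n-j}$, so the collection of isomorphism classes $\{L^{-j}\colon 0\le j\le n-1\}$ coincides with $\{L^{j}\colon 0\le j\le n-1\}$, whence $\bigoplus_{j=0}^{n-1}L^{-j}\cong\bigoplus_{j=0}^{n-1}L^{j}$.

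Finally, tensoring the middle isomorphism with $E$ and using that tensor product distributes over finite direct sums gives $E\otimes f_*\cO_X\cong\bigoplus_{j=0}^{n-1}E\otimes L^{j}$, which is the right-hand isomorphism; chaining the three identifications yields the claim. I do not expect a genuine obstacle here: both ingredients are standard. The only point that needs care is the sign convention in the cyclic-covering construction ($L$ versus $L^{-1}$), and this is harmless precisely because the order-$n$ hypothesis identifies the two direct sums; one should also take the projection-formula map to be the canonical one so that the local identifications in the first step glue.
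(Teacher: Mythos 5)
Your proof is correct, and for the substantive half it takes a genuinely different route from the paper. The paper proves $f_*f^*E\cong E\otimes f_*\cO_X$ by choosing a locally free resolution $E_\bullet\twoheadrightarrow E$, applying the projection formula termwise to the locally free terms, and then using that the three functors $f_*$, $f^*$ and $\otimes f_*\cO_X$ are exact (which rests on $f$ being finite and flat and on $f_*\cO_X$ being locally free) to pass from the resolution to $E$ itself. You instead prove the needed case of the projection formula directly from affineness of $f$: the local identification $(f_*f^*E)|_U=\widetilde{S\otimes_R M}=(E\otimes_{\cO_Y}f_*\cO_X)|_U$, glued by naturality. This is both more elementary and more general --- it needs no resolutions, no flatness or \'etaleness of $f$, and works for any affine morphism and any quasicoherent $E$ --- whereas the paper's argument stays at the level of global sheaf theory and leans on standard exactness facts available in its smooth projective setting. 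For the decomposition $f_*\cO_X\cong\oplus_{j=0}^{n-1}L^{j}$ the paper simply cites \cite[Lemma I.17.2]{BHPV}, while you rederive it from the construction $X=\mathrm{Spec}_Y\bigl(\bigoplus_{j=0}^{n-1}L^{-j}\bigr)$ and resolve the $L$-versus-$L^{-1}$ convention using that $L$ is $n$-torsion; that is the same content as the cited lemma, just unpacked, and your explicit handling of the sign convention is a point the paper leaves implicit. No gaps.
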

\begin{proof}
In \cite[\S 7]{Mum70} 
there is the statement that $f_*f^*E\cong E\otimes f_*\cO_X$, but without any further comment. Hence we give a proof:
Let $E_\bullet\twoheadrightarrow E$ be a locally free resolution.
As the functors $f_*$, $f^*$ and $\otimes f_*\cO_X$ 
are exact, we get the commutative diagram
\begin{center}
\ \xymatrix{
...\ar[r]& f_*f^*E_{-2}\ar[r]\ar[d]^{\cong} & f_*f^*E_{-1}\ar[r]\ar[d]^{\cong}& f_*f^*E \ar[r] &0 \\
...\ar[r]& E_{-2}\otimes f_*\cO_X\ar[r] & E_{-1}\otimes f_*\cO_X\ar[r]& E\otimes f_*\cO_X \ar[r] &0
}\ 
\end{center}
with exact rows, where the vertical arrows are the natural isomorphisms given by the projection formula.
Thus there is also an isomorphism $f_*f^*E\cong E\otimes f_*\cO_X$. 
Using \cite[Lemma I.17.2]{BHPV}, which states that $f_*\cO_X=\oplus_{j=0}^{n-1}L^{j}$, one gets the second isomorphism.
\end{proof}

\subsection{The morphism $\pi\colon M_Y\to M_Y/\langle\varphi\rangle$}

In this section we exhibit the morphism $\pi\colon M_Y\to M_Y/\langle\varphi\rangle$ from Proposition \ref{pbFact}.
We start with the following definition:

\begin{definition}\label{Eorder}
For a coherent sheaf $E$ on $Y$ the $E$-order of $L$ is the number $\ord_E(L):=\min \{ j \in \IN \;|\;  E\cong E\otimes L^{j}\}$.
\end{definition}
\noindent Clearly $1\le\ord_E(L)\le n$.

\begin{lemma}\label{PreimDiv1}
Let $E$ be a coherent sheaf on $Y$. Then
$E\otimes L^{j}\cong E\otimes L^{k}$ if and only if $\ord_E(L)|(k-j)$.
In particular, the sheaves $E\otimes L^{j}$ are pairwise nonisomorphic for $0\le j<\ord_E(L)$, and the number of nonisomorphic sheaves of the form $E\otimes L^{j}$ with $j\in\IZ$ is equal to $\ord_E(L)$.
\end{lemma}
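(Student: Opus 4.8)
The statement to prove is Lemma \ref{PreimDiv1}: for a coherent sheaf $E$ on $Y$, one has $E\otimes L^{j}\cong E\otimes L^{k}$ if and only if $\ord_E(L)\mid(k-j)$, and consequently the sheaves $E\otimes L^{j}$ are pairwise nonisomorphic for $0\le j<\ord_E(L)$, with exactly $\ord_E(L)$ isomorphism classes among all $E\otimes L^{j}$, $j\in\IZ$.

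The plan is to reduce everything to the structural fact that tensoring by $L$ defines an action of the cyclic group $\IZ/n\IZ$ on the set of isomorphism classes $\{[E\otimes L^j]\}$, and that $\ord_E(L)$ is precisely the size of the stabiliser-complement, i.e.\ the order of the orbit map. Concretely, I would first record that $E\otimes L^j\cong E\otimes L^k$ is equivalent, after tensoring both sides by the invertible sheaf $L^{-j}$ (which is an isomorphism of functors $\otimes L^{-j}$), to $E\cong E\otimes L^{k-j}$. So the whole statement is about which powers $L^i$ satisfy $E\cong E\otimes L^i$; call this set $S:=\{i\in\IZ \mid E\cong E\otimes L^i\}$.

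The key step is then to show that $S$ is a subgroup of $\IZ$. Closure under negation is clear by applying $\otimes L^{-i}$ to an isomorphism $E\cong E\otimes L^i$. Closure under addition follows by composing: if $E\cong E\otimes L^i$ and $E\cong E\otimes L^{i'}$, then $E\cong E\otimes L^i\cong (E\otimes L^{i'})\otimes L^i = E\otimes L^{i+i'}$, where the middle isomorphism is obtained by tensoring the second isomorphism with $L^i$. Since $L$ has order $n$, we have $n\in S$, so $S$ is a nontrivial subgroup of $\IZ$ and hence $S=\ord_E(L)\cdot\IZ$, because by Definition \ref{Eorder} the number $\ord_E(L)$ is the smallest positive element of $S$, and every subgroup of $\IZ$ is generated by its least positive element. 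This immediately gives the first claim: $E\otimes L^j\cong E\otimes L^k$ iff $k-j\in S=\ord_E(L)\IZ$ iff $\ord_E(L)\mid(k-j)$.

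For the counting statements I would simply specialise. Pairwise non-isomorphism for $0\le j<\ord_E(L)$ is the contrapositive of the first claim, since for two distinct such indices the difference $k-j$ satisfies $0<|k-j|<\ord_E(L)$ and so is not divisible by $\ord_E(L)$. For the final count, the map $j\mapsto[E\otimes L^j]$ factors through $\IZ/\ord_E(L)\IZ$ by the divisibility criterion and is injective on a set of representatives $\{0,1,\dots,\ord_E(L)-1\}$, hence there are exactly $\ord_E(L)$ isomorphism classes. I expect no real obstacle here; the only point requiring a little care is justifying that $\otimes L^i$ genuinely preserves and reflects isomorphism classes (so that it acts on $\{[E\otimes L^j]\}$), which is immediate because $L^i$ is a line bundle and $\otimes L^{-i}$ is an inverse functor. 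Everything else is the elementary classification of subgroups of $\IZ$.
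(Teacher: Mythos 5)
Your proof is correct and follows essentially the same route as the paper's: both reduce the question to when $E\cong E\otimes L^{k-j}$ and exploit the minimality in Definition \ref{Eorder} via Euclidean division --- the paper writes $k-j=a\,\ord_E(L)+r$ and reduces $E\otimes L^{k-j}\cong E\otimes L^{r}$ explicitly, while you package the identical reduction as the statement that $S=\{i\in\IZ\mid E\cong E\otimes L^{i}\}$ is a subgroup of $\IZ$ containing $n$, hence equal to $\ord_E(L)\IZ$. The counting consequences are then drawn in the same way in both arguments.
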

\begin{proof}
One has $k-j=a\,\ord_E(L)+r$ for some $a,r\in\IZ$ with $0\le r<\ord_E(L)$, and 
$$E\otimes L^{k-j}\cong E\otimes L^{a\,\ord_E(L)+r}\cong E\otimes (L^{\ord_E(L)})^{a}\otimes L^{r}\cong E\otimes L^{r}\,.$$
Due to the minimality of $\ord_E(L)$ one has that $E\cong E\otimes L^{k-j}$ if and only if $r=0$.
Thus $E\otimes L^{k}\cong E\otimes L^{j}$ if and only if $r=0$, which is equivalent to $\ord_E(L)|(k-j)$.
\end{proof}

\begin{lemma}\label{PreimDiv4}
Let $E$ be a coherent sheaf on $Y$. Then
$\frac n{\gcf(n,\rk E)} \big| \ord_E(L) \big| n$. In particular, $\frac {\ord_E(L)\,\gcf(n,\rk E)} n \big| \gcf(n,\rk E)$.
\end{lemma}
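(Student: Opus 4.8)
The plan is to derive both divisibilities from the two earlier lemmas and then read off the ``in particular'' by elementary arithmetic.

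First I would establish $\ord_E(L)\mid n$. Since $L$ has order $n$, one has $L^n\cong\cO_Y$ and hence $E\cong E\otimes L^n$, so $n$ lies in the set $\{j\in\IN\mid E\cong E\otimes L^j\}$. Applying Lemma \ref{PreimDiv1} with $j=0$ and $k=n$ then gives $\ord_E(L)\mid n$ immediately.

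Next I would establish $\frac n{\gcf(n,\rk E)}\mid\ord_E(L)$. By definition of the $E$-order one has $E\cong E\otimes L^{\ord_E(L)}$; passing to determinants yields $\det(E)\cong\det(E\otimes L^{\ord_E(L)})$. Lemma \ref{PreimDiv2}, applied with $j=\ord_E(L)$, then forces $\frac n{\gcf(n,\rk E)}\mid\ord_E(L)$, as desired. Equivalently, one could argue directly: taking determinants gives $L^{\rk E\cdot\ord_E(L)}\cong\cO_Y$, hence $n\mid\rk E\cdot\ord_E(L)$, and dividing out $\gcf(n,\rk E)$ produces the claim.

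Finally, for the ``in particular'' statement I would combine the two divisibilities. Writing $\ord_E(L)=\frac n{\gcf(n,\rk E)}\cdot t$ with $t\in\IN$ (possible by the first divisibility), the quantity $\frac{\ord_E(L)\,\gcf(n,\rk E)}n$ equals exactly $t$, so it is an integer. The relation $\ord_E(L)\mid n$ then reads $\frac n{\gcf(n,\rk E)}\,t\mid n$, which is equivalent to $t\mid\gcf(n,\rk E)$, giving $\frac{\ord_E(L)\,\gcf(n,\rk E)}n\mid\gcf(n,\rk E)$. I expect no serious obstacle here: the whole argument is a short deduction from Lemmas \ref{PreimDiv1} and \ref{PreimDiv2} together with the order-$n$ hypothesis on $L$. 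The only point requiring a moment's care is that $\det(E\otimes L^j)\cong\det(E)\otimes L^{\rk E\cdot j}$ for an arbitrary coherent sheaf $E$, but this is exactly the identity already invoked in the proof of Lemma \ref{PreimDiv2}, so it is available here.
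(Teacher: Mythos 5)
Your proposal is correct and follows exactly the paper's route: the paper's proof is precisely ``Lemma \ref{PreimDiv1} with $k=n$ and $j=0$, and Lemma \ref{PreimDiv2} with $j=\ord_E(L)$,'' which are the two specializations you use, and your arithmetic for the ``in particular'' clause fills in the same elementary step the paper leaves implicit.
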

\begin{proof}
This follows from Lemma \ref{PreimDiv1} with $k=n$ and $j=0$, and Lemma \ref{PreimDiv2} with $j=\ord_E(L)$.
\end{proof}

\noindent As $1\le\ord_E(L)\le n$ for a coherent sheaf $E$ on $Y$, the following definition makes sense:

\begin{definition}\label{Morder}
The $M_Y$-order of a line bundle $L$ on $Y$ of order $n$ is the number $\ord_{M_Y}(L):=\lcm \{ \ord_{E}(L) \in \IN \;|\;  [E]\in M_Y\}$, i.e. the least common multiple of all $E$-orders of $L$, where $E$ runs through all sheaves with equivalence class in $M_Y$.
\end{definition}

\begin{lemma}\label{orderDiv} For all sheaves $E$ with equivalence class in $M_Y$ one has that
$$\frac n{\gcf(n,\rk E)}\Big| \ord_E(L) \Big| \ord_{M_Y}(L) \Big|n$$ and therefore
$$\frac {\ord_E(L)\,\gcf(n,\rk E)}n  \Big| \frac {\ord_{M_Y}(L)\,\gcf(n,\rk E)}n \Big| \gcf(n,\rk E)\,.$$
\end{lemma}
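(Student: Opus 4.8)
The plan is to read the first divisibility chain directly off Lemma \ref{PreimDiv4} together with the definition of the $M_Y$-order as a least common multiple, and then to deduce the second chain by multiplying the first by $\gcf(n,\rk E)/n$, checking only that every quantity that appears remains integral.

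First I would establish $\frac n{\gcf(n,\rk E)}\,\big|\,\ord_E(L)\,\big|\,\ord_{M_Y}(L)\,\big|\,n$. The outer two relations $\frac n{\gcf(n,\rk E)}\mid\ord_E(L)$ and $\ord_E(L)\mid n$ are precisely the content of Lemma \ref{PreimDiv4}. The relation $\ord_E(L)\mid\ord_{M_Y}(L)$ is immediate from Definition \ref{Morder}: by construction $\ord_{M_Y}(L)$ is the least common multiple of $\{\ord_{E'}(L)\mid[E']\in M_Y\}$, and every member of this set divides its lcm. Finally $\ord_{M_Y}(L)\mid n$ holds because each $\ord_{E'}(L)$ divides $n$ by Lemma \ref{PreimDiv4}, so $n$ is a common multiple of the whole set and is therefore divisible by the lcm.

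For the second chain I would set $g:=\gcf(n,\rk E)$. The leftmost relation $\frac n g\mid\ord_E(L)\mid\ord_{M_Y}(L)$ guarantees that $\frac{\ord_E(L)\,g}{n}$ and $\frac{\ord_{M_Y}(L)\,g}{n}$ are genuine positive integers, so the expressions in the second chain are meaningful. Multiplying $\ord_E(L)\mid\ord_{M_Y}(L)$ through by $g/n$ then yields $\frac{\ord_E(L)\,g}{n}\mid\frac{\ord_{M_Y}(L)\,g}{n}$, and multiplying $\ord_{M_Y}(L)\mid n$ through by $g/n$ yields $\frac{\ord_{M_Y}(L)\,g}{n}\mid g=\gcf(n,\rk E)$. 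There is no real obstacle here: the statement is pure bookkeeping on top of Lemma \ref{PreimDiv4} and the lcm definition, and the single point demanding care is integrality — one must invoke the leftmost divisibility before dividing by $n$, so that the terms of the second chain are integers rather than mere rationals, after which the elementary implication $a\mid b\Rightarrow ca\mid cb$ finishes the argument.
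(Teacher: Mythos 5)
Your proof is correct and follows the paper's argument exactly: Lemma \ref{PreimDiv4} gives the outer divisibilities, the definition of the least common multiple (Definition \ref{Morder}) gives the middle ones, and the second chain follows by multiplying through by $\gcf(n,\rk E)/n$. Your elaboration of the integrality point is a faithful unpacking of what the paper calls an "immediate consequence."
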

\begin{proof}
By Lemma \ref{PreimDiv4}, $\frac n{\gcf(n,\rk E)} \big| \ord_E(L) \big| n$ for a coherent sheaf $E$ on $Y$.
The first row now follows from the definition of the least common multiple, and the second is an immediate consequence.
\end{proof}

\noindent We are now ready for the main result on the first factorising morphisms $\pi\colon M_Y\to M_Y/\langle\varphi\rangle$ of Proposition \ref{pbFact}:

\begin{proposition}\label{Mdecomp}
Let $M_k:=\{ [E] \in M_Y \;|\; \ord_E(L)| k \}$ for $k\in\IN$, and let $j\in\IN$ such that $M_j\neq\emptyset$.
\begin{enumerate}
\item $\varphi(M_k)=M_k$.
\item $\frac n{\gcf(n,r)}\Big|\ord_{M_j}(L)\Big|j$.
\item If $k|\ord_{M_j}(L)$, then $M_k\subseteq M_j$.
\item $M_j$ is the fixed point set of the morphism $\varphi^{\frac j m}$ (composition of morphisms).
In particular, it is closed in $M_Y$.
\item One has that $M_k=M_Y$ if and only if $\ord_{M_Y}(L)|k$.

In particular, $M_{\ord_{M_Y}(L)}=M_Y$, and $\ord_{M_Y}(L)\ge 1$ is minimal with this property.
\item $\displaystyle\Sigma_j:=\{ [E] \in M_j \;|\; \ord_E(L)\neq\ord_{M_j}(L)\}=\hspace{-0.5cm}\bigcup_{\frac n{\gcf(n,r)}|k|\ord_{M_j}(L),k\neq\ord_{M_j}(L)}\hspace{-0.5cm}M_k$, and it is closed in $M_j$ and ${M_Y}$.
\item 
Assume that $M_Y=M_Y^s$.
Then the number of preimages under the quotient morphism $\pi\colon M_j\to M_j/\langle\varphi\rangle$ of $\pi([E])$ with $[E]\in M_j$ is $\frac {\ord_E(L)}{m}$.

Moreover, if $M_j\neq \Sigma_j$, then $\pi$ is an $\frac {\ord_{M_j}(L)}{m}:1$ covering branched along $\Sigma_j$.
\end{enumerate}
\end{proposition}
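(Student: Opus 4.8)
\noindent The plan is to reduce the statement to a count of $\langle\varphi\rangle$-orbits and then to locate the points at which the action fails to be free. Since we assume $M_Y=M_Y^s$, the closed points of the closed subscheme $M_j\subseteq M_Y$ are honest isomorphism classes of stable sheaves, so that $[E]=[F]$ holds precisely when $E\cong F$; in particular I would first observe that the fibre of the quotient morphism $\pi$ over $\pi([E])$ is exactly the $\langle\varphi\rangle$-orbit $\{[E\otimes L^{am}]\mid a\in\IZ\}$ of $[E]$.

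Next I would count this orbit. By Lemma \ref{orderDiv} one has $\frac n{\gcf(n,r)}\big|\ord_E(L)$ for every $[E]\in M_j$ (recall that all sheaves parametrised by $M_Y$ have rank $r$), and the standing hypothesis $m\big|\frac n{\gcf(n,r)}$ then gives $m\big|\ord_E(L)$, so that $\frac{\ord_E(L)}m$ is a positive integer. By Lemma \ref{PreimDiv1}, $E\otimes L^{am}\cong E\otimes L^{bm}$ if and only if $\ord_E(L)\big|(a-b)m$, which, since $m\big|\ord_E(L)$, is equivalent to $\frac{\ord_E(L)}m\big|(a-b)$. Hence the orbit has exactly $\frac{\ord_E(L)}m$ elements, which is the first assertion.

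For the covering statement I would regard $\langle\varphi\rangle$ as a finite group acting on the quasiprojective variety $M_j$, with $\pi$ the associated quotient. Using (2) together with $m\big|\frac n{\gcf(n,r)}$ one gets $m\big|\ord_{M_j}(L)$, and by definition of the least common multiple $\ord_E(L)\big|\ord_{M_j}(L)$ for all $[E]\in M_j$; since $\varphi^k$ is induced by $\otimes L^{km}$, I would deduce that $\varphi^k$ restricts to the identity on $M_j$ if and only if $\ord_E(L)\big|km$ for all $[E]\in M_j$, i.e.\ $\frac{\ord_{M_j}(L)}m\big|k$, so that $\langle\varphi\rangle|_{M_j}$ has order $\frac{\ord_{M_j}(L)}m$. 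Combining this with the orbit count, the action is free at $[E]$ exactly when $\ord_E(L)=\ord_{M_j}(L)$, that is, on the open complement $M_j\setminus\Sigma_j$, which is nonempty by the hypothesis $M_j\neq\Sigma_j$. There $\pi$ is an unramified $\frac{\ord_{M_j}(L)}m:1$ covering, while along $\Sigma_j$ one has $\ord_E(L)<\ord_{M_j}(L)$, the stabilisers are nontrivial, and $\pi$ ramifies; this would give precisely that $\pi$ is a $\frac{\ord_{M_j}(L)}m:1$ covering branched along $\Sigma_j$.

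The step I expect to be the main obstacle is the geometric input that, for a finite group acting on a quasiprojective variety, the quotient morphism has orbits as its closed fibres and is \'etale exactly over the free locus, so that the ramification is supported precisely on the non-free locus; combined with the closedness of $\Sigma_j$ established in (6), this is what identifies $\Sigma_j$ as the genuine branch locus rather than a larger or smaller set. The precise orbit-size computation of the second step is what upgrades ``generically free'' to ``free on all of $M_j\setminus\Sigma_j$'', and therefore does the real work.
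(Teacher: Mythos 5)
Your proposal proves only item (7) of the proposition. The statement has seven parts, and (1)--(6) --- the $\varphi$-invariance of the $M_k$, the divisibility chain $\frac n{\gcf(n,r)}\big|\ord_{M_j}(L)\big|j$, the inclusion $M_k\subseteq M_j$, the identification of $M_j$ with the fixed-point set of $\varphi^{\frac jm}$ (whence its closedness), the criterion for $M_k=M_Y$, and the description of $\Sigma_j$ as a finite union of the $M_k$ together with its closedness --- appear nowhere in your write-up. This is not a cosmetic omission, because your own argument for (7) depends on them: you invoke item (2) to get $m\big|\ord_{M_j}(L)$, you invoke the closedness of $\Sigma_j$ from item (6) to identify the branch locus, and even the assertion that $\langle\varphi\rangle$ acts on $M_j$ (so that the quotient $\pi\colon M_j\to M_j/\langle\varphi\rangle$ exists) is exactly item (1). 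In the paper these facts form a short but genuine chain: (6) follows from (4) because $\Sigma_j$ is a finite union of fixed-point sets of the automorphisms $\varphi^{\frac km}$, which are closed since $M_Y$ is separated; (4) in turn rests on Lemma \ref{PreimDiv1} together with the divisibility $m\big|\frac n{\gcf(n,r)}\big|\ord_{M_j}(L)\big|j$ coming from (2) and Lemma \ref{orderDiv}. None of this is supplied, so as a proof of Proposition \ref{Mdecomp} as stated the proposal is incomplete.

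For item (7) itself your argument is correct and essentially the paper's: under $M_Y=M_Y^s$ closed points are isomorphism classes, the fibre of $\pi$ over $\pi([E])$ is the orbit of $[E]$, and Lemma \ref{PreimDiv1} together with $m\big|\ord_E(L)$ (from Lemma \ref{orderDiv} and $m\big|\frac n{\gcf(n,r)}$) gives the fibre size $\frac{\ord_E(L)}m$; on $M_j\setminus\Sigma_j$ all orbits have the maximal size $\frac{\ord_{M_j}(L)}m$, so $\pi$ restricts to an unbranched $\frac{\ord_{M_j}(L)}m:1$ covering there and is branched along $\Sigma_j$. Your explicit computation of the order of $\langle\varphi\rangle|_{M_j}$ and of the stabilisers (trivial exactly when $\ord_E(L)=\ord_{M_j}(L)$), and the remark that over $\IC$ a finite quotient is \'etale precisely on the free locus, make explicit what the paper leaves implicit; that part is sound. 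The fix is simply to supply proofs of (1)--(6), which are elementary but are the logical prerequisites of everything you wrote.
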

\begin{proof}
\begin{enumerate}
\item Let $[E]\in M_k$. Then $\ord_{E\otimes L^{\otimes m}}(L)=\ord_{E}(L)$. Thus $\varphi([E])\in M_k$.
\item For all $[E] \in M_j$ one has that $\ord_E(L)|j$. By definition of the least common multiple, $\ord_{M_j}(L)|j$. By Lemma \ref{orderDiv} $\frac n{\gcf(n,r)}\big|\ord_{M_j}(L)$.
\item Let $[E]\in M_Y$ such that $\ord_E(L)| k$, i.e.\ $[E]\in M_k$. By assumption, $k|\ord_{M_j}(L)$, and by item 2, $\ord_{M_j}(L)|j$. Thus  $\ord_E(L)|j$, i.e.\ $[E]\in M_j$.
\item Let $[E] \in M_Y$. By Lemma \ref{PreimDiv1} $\ord_E(L)|j$ if and only if $E\cong E\otimes L^{\otimes j}$, and the morphism induced by $\otimes L^{\otimes j}$ is $\varphi^{\frac j m}\colon M_Y\to M_Y$. Here we use that $m\big|\frac n{\gcf(n,r)}$ by assumption, and $\frac n{\gcf(n,r)}\big|j$ by item 2. The fixed point set of an automorphism of a separated scheme is closed.
\item If $\ord_{M_Y}(L)|k$ then clearly $\ord_E(L)|k$ for any $[E] \in M_Y$ by definition, hence $[E]\in M_k$.
Conversely, if $M_k=M_Y$ then $\ord_M(L)|k$ by item 2.
\item
Let $[E]\in M_k$ with $k\neq\ord_{M_j}(L)$ and $\frac n{\gcf(n,r)}\big|k\big|\ord_{M_j}(L)$. By item 3 $[E]\in M_j$, and as $\ord_E(L)|k$ one has that $\ord_E(L)\le k<\ord_{M_j}(L)$.
Conversely, let $[E]\in M_j$ with $\ord_E(L)\neq\ord_{M_j}(L)$. Clearly $[E]\in M_{\ord_E(L)}$, and by Lemma \ref{orderDiv} one has that $\frac n{\gcf(n,r)}\big| \ord_E(L)\big|\ord_{M_j}(L)$.

$\Sigma_j$ is the union of finitely many subschemes $M_k$ that are closed in $M_Y$ by item 4 or because they are empty, hence $\Sigma_j$ is closed in $M_Y$, and thus also in $M_j$.
\item 
The number of nonisomorphic sheaves of the form $E\otimes L^{\otimes k}$ is $\ord_E(L)$ by Lemma \ref{PreimDiv1}.
As $m\big|\frac n{\gcf(n,r)}$ by assumption and $\frac n{\gcf(n,r)}\big|\ord_E(L)$ by Lemma \ref{orderDiv}, one has that $m|\ord_E(L)$.
Hence there are $\frac {\ord_E(L)}{m}$ nonisomorphic sheaves of the form $E\otimes L^{\otimes km}$, and their classes are the preimages of $\pi$.

If $M_j\neq \Sigma_j$, then $\Sigma_j$ is a proper closed subset. By definition of $\Sigma_j$, for $[E]\in M_j\setminus \Sigma_j$ one has that $\ord_E(L)=\ord_{M_j}(L)$.
Therefore $M_j\setminus \Sigma_j\to (M_j\setminus \Sigma_j)/\langle \varphi\rangle $ is an unbranched $\frac {\ord_{M_j}(L)}{m}:1$ covering, and $\pi$ is an $\frac {\ord_{M_j}(L)}{m}:1$ covering branched along $\Sigma_j$.
\qedhere
\end{enumerate}
\end{proof}

\noindent The statement we are most interested in is that if $\Sigma:= \{ [E] \in M_Y^s \;|\; \ord_E(L)\neq\ord_{M_Y^s}(L)\}$ is a proper subset of $M_Y^s$ then $\pi\colon M_Y^s\to M_Y^s/\langle\varphi\rangle$ is an $\frac {\ord_{M_Y^s}(L)}{m}:1$ covering branched along the closed subscheme $\Sigma$.
This follows from the above Proposition \ref{Mdecomp} using $j=\ord_{M_Y^s}(L)$.
The case of a prime power covering ensures the assumption if $M_Y^s\neq\emptyset$, as by Lemma \ref{orderDiv} $\ord_{M_Y^s}(L)$ is a prime power as well and therefore there is an $[E]\in M_Y^s$ such that $\ord_{M_Y^s}(L)=\ord_{E}(L)$. Thus we have the

\begin{corollary}\label{ppQuCov}
If $n$ is a prime power and $M_Y^s\neq\emptyset$ then $\pi\colon M_Y^s\to M_Y^s/\langle\varphi\rangle$ is an $\frac {\ord_{M_Y^s}(L)}{m}:1$ covering branched along the closed subscheme $\Sigma:= \{ [E] \in M_Y^s \;|\; \ord_E(L)\neq\ord_{M_Y^s}(L)\} $.
\end{corollary}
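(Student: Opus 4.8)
The plan is to reduce the whole assertion to the single claim that $\Sigma$ is a \emph{proper} subset of $M_Y^s$, and then to quote Proposition \ref{Mdecomp} directly. Concretely, I would apply that proposition with $M_Y$ replaced by $M_Y^s$, so that the standing hypothesis $M_Y=M_Y^s$ of its item 7 holds automatically (note $\varphi$ still restricts to $M_Y^s$, since tensoring by the line bundle $L^{\otimes m}$ preserves stability, and all classes have the common rank $r$). Taking $j:=\ord_{M_Y^s}(L)$, item 5 gives $M_j=M_{\ord_{M_Y^s}(L)}=M_Y^s$, and the set $\Sigma_j$ of item 6 then coincides with $\Sigma$. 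Hence, once we know $M_j\neq\Sigma_j$, item 7 yields exactly that $\pi\colon M_Y^s\to M_Y^s/\langle\varphi\rangle$ is an $\frac{\ord_{M_Y^s}(L)}{m}:1$ covering branched along $\Sigma$. So the entire content sits in exhibiting one class $[E]\in M_Y^s$ with $\ord_E(L)=\ord_{M_Y^s}(L)$.

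To produce such a class I would use the prime power hypothesis. Write $n=p^a$. By Lemma \ref{orderDiv} every order $\ord_E(L)$ with $[E]\in M_Y^s$ divides $n$, hence is itself a power of $p$; thus $S:=\{\ord_E(L)\mid [E]\in M_Y^s\}$ is a nonempty (as $M_Y^s\neq\emptyset$) finite set of $p$-powers. The least common multiple of a family of powers of a single prime equals its largest member, so $\ord_{M_Y^s}(L)=\lcm(S)=\max(S)$. Since $S$ is finite and nonempty, this maximum is attained by some $[E]\in M_Y^s$ with $\ord_E(L)=\max(S)=\ord_{M_Y^s}(L)$. By the definition of $\Sigma$ this class lies outside $\Sigma$, so $\Sigma\subsetneq M_Y^s$, which is precisely the hypothesis needed above.

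There is no real obstacle in the argument; the only point worth emphasising is that the prime power condition is exactly what makes it go through. For general $n$ the least common multiple of the occurring orders can strictly exceed each individual order — for instance orders $2$ and $3$ force $\ord_{M_Y^s}(L)=6$ while no sheaf need have order $6$ — in which case $\Sigma$ could be all of $M_Y^s$ and the branching statement would degenerate. Restricting to $n=p^a$ removes this pathology, because the divisors of $p^a$ form a chain under divisibility, so their supremum is attained within the family.
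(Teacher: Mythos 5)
Your proposal is correct and follows essentially the same route as the paper: reduce to the properness of $\Sigma$ via Proposition \ref{Mdecomp} with $j=\ord_{M_Y^s}(L)$, then use the prime power hypothesis (via Lemma \ref{orderDiv}) to see that the least common multiple of the occurring orders, all powers of one prime, is attained by some $[E]\in M_Y^s$. Your extra remarks (lcm of $p$-powers equals their maximum, and the counterexample with orders $2$ and $3$ for $n=6$) just make explicit what the paper leaves implicit.
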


\subsection{Conclusions for the pullback morphism}

We now can give a description of the locus of stable sheaves becoming strictly semistable under the pullback morphism $f^*$ between the moduli spaces of sheaves. We still consider the situation of the beginning of Section \ref{arbCC} and have in mind the application to one of the cases in Example \ref{Mexa}.

\begin{theorem}\label{CycStab}
Let $M_Y^s\subseteq M_Y^s(P)$ be a nonempty quasiprojective subscheme containing classes of stable sheaves of rank $r$ such that the morphism $\varphi\colon M_Y^s\to M_Y^s$ induced by $\otimes L^{\otimes m}$ for a suitable $m\big|\frac n{\gcf(n,r)}$ is well-defined,
let $\Sigma:= \{ [E] \in M_Y^s \;|\; \ord_E(L)\neq\ord_{M_Y^s}(L)\}$, and let $f^*\colon M_Y^s\to M_X(nP)$ be the morphism induced by pullback by $f$.
The following conditions are equivalent:
\begin{enumerate}
\item $\ord_{M_Y^s}(L)=\max \{ \ord_{E}(L) \;|\;  [E]\in M_Y^s\}=n$;
\item there is a stable sheaf of the form $f^*E$ for some $E\in M_Y^s$;
\item $f^*(M_Y^s\setminus\Sigma)=f^*(M_Y^s)\cap M^s_X(nP)\neq \emptyset$;
\item $M_Y^s\setminus\Sigma=(f^*)^{-1}(M^s_X(nP))\neq \emptyset$.
\end{enumerate}
\end{theorem}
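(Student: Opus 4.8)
The plan is to reduce all four conditions to a single criterion: for $[E]\in M_Y^s$ the pullback $f^*E$ is stable if and only if $\ord_E(L)=n$. Once this is available, the rest is bookkeeping. Indeed, each of (2), (3) and (4) asserts (beyond a set-theoretic equality) the existence of some $[E]\in M_Y^s$ with $f^*E$ stable, i.e.\ with $\ord_E(L)=n$; and the existence of such an $[E]$ is precisely $\max\{\ord_E(L)\mid[E]\in M_Y^s\}=n$. Since $n$ then occurs among the $\ord_E(L)$, all of which divide $n$ by Lemma \ref{orderDiv}, their least common multiple $\ord_{M_Y^s}(L)$ equals $n$ as well, which is exactly condition (1).

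To prove the criterion I would compute $\mathrm{End}(f^*E)$ and test simpleness. The first point is that every twist $E\otimes L^{j}$ shares the reduced Hilbert polynomial of $E$: because $f^*L\cong\cO_X$ one has $f^*(E\otimes L^{j})\cong f^*E$, so Lemma \ref{rHpPb} gives $p(E\otimes L^{j})=p(f^*E)=p(E)$ for all $j$. Now adjunction together with Lemma \ref{CycShDec1} yields
$$\mathrm{End}(f^*E)\cong\Hom(E,f_*f^*E)\cong\bigoplus_{j=0}^{n-1}\Hom(E,E\otimes L^{j})\,.$$
As $E$ and each $E\otimes L^{j}$ are stable with equal reduced Hilbert polynomial, a nonzero homomorphism between them is an isomorphism; hence $\Hom(E,E\otimes L^{j})$ is $\IC$ if $E\cong E\otimes L^{j}$ and $0$ otherwise. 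By Lemma \ref{PreimDiv1} the former happens exactly for the $\frac n{\ord_E(L)}$ multiples of $\ord_E(L)$ among $0,\dots,n-1$, so $\dim\mathrm{End}(f^*E)=\frac n{\ord_E(L)}$. Since by Proposition \ref{PbStabPStab} $f^*E$ is stable if and only if it is simple, i.e.\ if and only if this dimension equals $1$, the criterion follows.

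It then remains to close the chain of implications. The direction (1)$\Rightarrow$(2) is immediate, as $\max=n$ produces an $[E]$ with $\ord_E(L)=n$ and hence $f^*E$ stable. For the reverse I would note that (2), (3) and (4) each exhibit a stable sheaf of the form $f^*E$, whence $\ord_E(L)=n$ and (1) holds. Finally, to recover the full statements (3) and (4) from (1) I would verify the set-theoretic equalities: when $\ord_{M_Y^s}(L)=n$ one has $M_Y^s\setminus\Sigma=\{[E]\mid\ord_E(L)=n\}=(f^*)^{-1}(M^s_X(nP))$ directly from the definition of $\Sigma$ and the criterion, and applying $f^*$ and using $h(h^{-1}(S))=S\cap\mathrm{im}(h)$ turns this into $f^*(M_Y^s\setminus\Sigma)=f^*(M_Y^s)\cap M^s_X(nP)$; the nonemptiness clauses hold because such an $[E]$ exists.

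The main obstacle is the criterion, and inside it the assertion that $E\otimes L^{j}$ always has the same reduced Hilbert polynomial as $E$. Without it the groups $\Hom(E,E\otimes L^{j})$ for non-isomorphic stable twists could be nonzero, and the clean evaluation $\dim\mathrm{End}(f^*E)=\frac n{\ord_E(L)}$ would fail. This is precisely where the cyclic structure of the covering enters, through $f^*L\cong\cO_X$ and Lemma \ref{rHpPb}; the remaining manipulations with $\Sigma$, the least common multiple $\ord_{M_Y^s}(L)$ and the emptiness conditions are routine.
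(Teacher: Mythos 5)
Your proposal is correct and takes essentially the same approach as the paper: your computation $\mathrm{End}(f^*E)\cong\bigoplus_{j=0}^{n-1}\Hom(E,E\otimes L^{\otimes j})$ via adjunction and Lemma \ref{CycShDec1} is precisely the content (and proof) of the paper's Lemma \ref{CycShDec4}, which the paper likewise combines with Proposition \ref{PbStabPStab} and the fact that nonzero maps between stable sheaves with equal (twisted/reduced) Hilbert polynomial are isomorphisms, to obtain $\ord_E(L)=n$ if and only if $f^*E$ is stable. The only difference is organisational: the paper runs the cycle $3\Rightarrow 2\Rightarrow 1\Rightarrow 4\Rightarrow 3$ directly rather than first isolating your unconditional criterion, but the underlying argument is the same.
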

\begin{proof}
3 $\Rightarrow$ 2 is trivial.\\
2 $\Rightarrow$ 1: A stable sheaf is simple, hence by Lemma \ref{CycShDec4} $\ord_E(L)=n$.
In particular, $\ord_{M_Y^s}(L)\ge\max \{ \ord_{E}(L) \;|\;  [E]\in M_Y^s\}\ge n$.
As $\ord_E(L)|\ord_{M_Y^s}(L)|n$ by Lemma \ref{orderDiv}, one has that $n=\ord_{M_Y^s}(L)$.\\
1 $\Rightarrow$ 4: By Theorem \ref{MHApb} one has $(f^*)^{-1}(M^s_X(nP))\subseteq M_Y^s$. Thus we need to show that $E\not\in\Sigma$ if and only if $f^*E$ is stable for all $E\in M_Y^s$. 
Therefore let $E\in M_Y^s$, and assume that $\ord_{M_Y^s}(L)=n$ due to item 1.
One has that $E\not\in\Sigma$ if and only if $\ord_E(L)=\ord_{M_Y^s}(L)=n$, i.e.\ $E\not\cong E\otimes L^{\otimes j}$ for $1\le j<n$.
As $E$ and $E\otimes L$ are both stable and have the same (twisted) Hilbert polynomial, any nontrivial homomorphism is an isomorphism. Thus $E\not\cong E\otimes L^{\otimes j}$ is equivalent to $\Hom(E,E\otimes L^{\otimes j})=0$.
In particular, $\ord_E(L)=n$ if and only if $\Hom(E,E\otimes L^{\otimes j})=0$ for $1\le j<n$.
By Lemma \ref{CycShDec4} below $\Hom(E,E\otimes L^{\otimes j})=0$ for $1\le j<n$ if and only if $f^*E$ is simple.
Finally $f^*E$ is simple if and only if it is stable by Proposition \ref{PbStabPStab}.
On the other hand, the assumption $\ord_{M_Y^s}(L)=\max \{ \ord_{E}(L) \;|\;  [E]\in M_Y^s\}$ ensures that $\Sigma\neq M_Y^s$.\\
4 $\Rightarrow$ 3: This follows by taking the image by $f^*$.
\end{proof}

\noindent In the proof we used the following lemma, which will be useful more often.

\begin{lemma}\label{CycShDec4}
Let $E$ be a coherent sheaf on $Y$. Then $f^*E$ is simple if and only if $E$ is simple and $\Hom(E,E\otimes L^{\otimes j})=0$ for $1\le j<n$.
In particular, in this case, $E\not\cong E\otimes L^{\otimes j}$ for $1\le j<n$.
\end{lemma}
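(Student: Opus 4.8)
The plan is to reduce the entire statement to a single computation of the endomorphism space $\Hom(f^*E,f^*E)$ and then read off simplicity from its dimension. The main tool will be the adjunction between $f^*$ and $f_*$ combined with Lemma \ref{CycShDec1}. Concretely, I would first invoke the adjunction $f^*\dashv f_*$ to obtain $\Hom(f^*E,f^*E)\cong\Hom(E,f_*f^*E)$, placing $E$ in the source slot and $f_*f^*E$ in the target slot. Then, applying Lemma \ref{CycShDec1}, which identifies $f_*f^*E\cong\bigoplus_{j=0}^{n-1}E\otimes L^{j}$, and distributing $\Hom$ over the finite direct sum, I would arrive at the key identity $\Hom(f^*E,f^*E)\cong\bigoplus_{j=0}^{n-1}\Hom(E,E\otimes L^{j})$.

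With this identity in hand the equivalence becomes a dimension count. Since $E$ is nonzero (otherwise both sides of the asserted equivalence fail) and $f$ is a finite \'etale covering, in particular faithfully flat, the sheaf $f^*E$ is nonzero, so $f^*E$ is simple precisely when $\dim_{\IC}\Hom(f^*E,f^*E)=1$. On the right hand side the summand for $j=0$ is $\Hom(E,E)$, which always contains the identity and hence has dimension at least one, while every other summand has nonnegative dimension. Therefore the total dimension equals $1$ if and only if $\dim_{\IC}\Hom(E,E)=1$, i.e.\ $E$ is simple, and simultaneously $\Hom(E,E\otimes L^{j})=0$ for all $1\le j<n$. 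This is exactly the claimed equivalence.

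The final ``in particular'' assertion is then immediate: any isomorphism $E\cong E\otimes L^{j}$ would in particular be a nonzero element of $\Hom(E,E\otimes L^{j})$, so the vanishing of these groups for $1\le j<n$ forces $E\not\cong E\otimes L^{j}$ in that range.

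I do not expect a genuine obstacle here; the only points requiring care are getting the adjunction in the correct variance, so that $E$ sits in the first argument and $f_*f^*E$ in the second, and isolating the $j=0$ term so that simplicity of $E$ appears as the condition $\dim_{\IC}\Hom(E,E)=1$ rather than being absorbed into the vanishing conditions. I would emphasise that one cannot simply quote the endomorphism computation of Proposition \ref{PbStabPStab}, since that presupposes $E$ stable, whereas here $E$ is an arbitrary coherent sheaf; it is precisely the adjunction argument, which needs no stability hypothesis, that makes the lemma hold at this level of generality.
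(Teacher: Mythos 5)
Your proposal is correct and follows exactly the paper's argument: the adjunction isomorphism $\Hom(f^*E,f^*E)\cong\Hom(E,f_*f^*E)$, the decomposition $f_*f^*E\cong\oplus_{j=0}^{n-1}E\otimes L^{j}$ from Lemma \ref{CycShDec1}, and the observation that the $j=0$ summand $\Hom(E,E)$ is always nonzero. You merely spell out the dimension count that the paper compresses into its final sentence, so there is nothing to add.
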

\begin{proof}
There is a natural isomorphism $\Hom(f^*E,f^*E)\cong\Hom(E,f_*f^*E)$, and together with Lemma \ref{CycShDec1} one gets $\Hom(f^*E,f^*E)\cong\oplus_{j=0}^{n-1}\Hom(E,E\otimes L^{\otimes j})$.
The claim now follows because $\Hom(E,E)\neq 0$.
\end{proof}

\noindent Coprimeness of rank and $n$ yield the simplest cases for the pullback morphism:

\begin{theorem}\label{gcdnr1}
Let $M_Y^s\subseteq M_Y^s(P)$ be a nonempty quasiprojective subscheme containing classes of stable sheaves of rank $r$ and $m|n$ such that the morphism $\varphi\colon M_Y^s\to M_Y^s$ induced by $\otimes L^{\otimes m}$ is well-defined, and let $f^*\colon M_Y^s\to M_X(nP)$ be the morphism induced by pullback by $f$.
Assume that $\gcf(n,r)=1$. Then $\pi\colon M_Y^s\to M_Y^s/\langle\varphi\rangle$ is an unbranched $\frac {n}{m}:1$ covering and $f^*(M_Y^s)\subseteq M_X^s(nP)$.

If we are in one of the cases of Example \ref{Mexa} then the morphism $f^*\colon M_Y^s\to M_X(nP)$ is $\frac {n}{m}:1$ onto its image.
\end{theorem}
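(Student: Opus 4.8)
The plan is to deduce the theorem from the machinery already developed for cyclic coverings; the coprimeness hypothesis $\gcf(n,r)=1$ does essentially all the work by forcing the $E$-order of $L$ to be maximal for every sheaf parametrised by $M_Y^s$.

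First I would record the order computation. For any stable sheaf $E$ of rank $r$ with $[E]\in M_Y^s$, Lemma \ref{PreimDiv4} gives $\frac{n}{\gcf(n,\rk E)}\big|\ord_E(L)\big|n$; since $\rk E=r$ and $\gcf(n,r)=1$ the outer term is $n$, so $\ord_E(L)=n$. Consequently $\ord_{M_Y^s}(L)=\lcm\{\ord_E(L)\mid[E]\in M_Y^s\}=n$, the maximum in condition (1) of Theorem \ref{CycStab} is attained, and the locus $\Sigma:=\{[E]\in M_Y^s\mid\ord_E(L)\neq\ord_{M_Y^s}(L)\}$ is empty.

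For the first assertion I would apply Proposition \ref{Mdecomp}(7) with $M_Y=M_Y^s$ and $j=\ord_{M_Y^s}(L)=n$. Part (5) of that proposition gives $M_j=M_{\ord_{M_Y^s}(L)}=M_Y^s$, and its potential branch locus $\Sigma_j$ coincides with the empty set $\Sigma$. Hence $M_j\neq\Sigma_j$, and the proposition yields that $\pi$ is an $\frac{\ord_{M_j}(L)}{m}=\frac{n}{m}:1$ covering branched along $\Sigma_j=\emptyset$, i.e.\ an unbranched $\frac{n}{m}:1$ covering, as claimed. For the inclusion $f^*(M_Y^s)\subseteq M_X^s(nP)$ I would invoke Theorem \ref{CycStab}: the order computation verifies condition (1), so the equivalent condition (4) holds, namely $M_Y^s\setminus\Sigma=(f^*)^{-1}(M_X^s(nP))$; as $\Sigma=\emptyset$ this reads $M_Y^s=(f^*)^{-1}(M_X^s(nP))$, which is exactly $f^*(M_Y^s)\subseteq M_X^s(nP)$.

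Finally, for the last sentence I would use the factorisation $f^*=\vartheta\circ\pi$ of Proposition \ref{pbFact}. In any of the cases of Example \ref{Mexa} that proposition guarantees that $\vartheta|_{M_Y^s/\langle\varphi\rangle}$ is injective, so over a point of the image of $f^*|_{M_Y^s}$ the fibre of $f^*$ equals the $\pi$-fibre over the unique preimage point under $\vartheta$, and this fibre has $\frac{n}{m}$ elements by the first assertion. Therefore $f^*\colon M_Y^s\to M_X(nP)$ is $\frac{n}{m}:1$ onto its image. The argument is essentially bookkeeping once the equality $\ord_E(L)=n$ is in hand; the only point requiring care is checking that the hypotheses of the cited results --- in particular $M_j=M_Y^s$ and the emptiness of the branch locus $\Sigma_j$ --- are genuinely met, which is precisely where the coprimeness assumption enters.
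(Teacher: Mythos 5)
Your proposal is correct and follows essentially the same route as the paper's proof: Lemma \ref{PreimDiv4} plus $\gcf(n,r)=1$ forces $\ord_E(L)=n$ for all $[E]\in M_Y^s$, hence $\Sigma=\emptyset$, then Proposition \ref{Mdecomp} gives the unbranched $\frac{n}{m}:1$ covering, Theorem \ref{CycStab} gives $f^*(M_Y^s)\subseteq M_X^s(nP)$ (you use its item (4) where the paper uses item (3), an immaterial difference), and Proposition \ref{pbFact} gives the final counting statement. Your version is merely more explicit about which items of Proposition \ref{Mdecomp} are invoked and why their hypotheses hold.
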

\begin{proof}
By Lemma \ref{PreimDiv4}, one has that $\ord_E(L)=n$ for all $[E]\in M_Y^s$ and therefore $\ord_{M_Y^s}(L)=n$.
Thus $\Sigma:= \{ [E] \in M_Y^s \;|\; \ord_E(L)\neq\ord_{M_Y^s}(L)\}=\emptyset$, and by Proposition \ref{Mdecomp} $\pi$ is an unbranched $\frac {n}{m}:1$ covering.
As item 1 of Theorem \ref{CycStab} holds, item 3 of that theorem yields $f^*(M_Y^s)\subseteq M_X^s(nP)$.

If we are in one of the cases of Example \ref{Mexa} then by Proposition \ref{pbFact} the morphism ${M_Y^s/\langle \varphi \rangle}\to M_X(nP)$ induced by $f^*$ is injective. Hence the composition $M_Y^s\to{M_Y^s/\langle \varphi \rangle}\to M_X(nP)$ is $\frac {n}{m}:1$ onto its image.
\end{proof}

\section{Canonical coverings of surfaces}\label{Surfaces}

In this section we apply our results to canonical coverings of surfaces, i.e.\ cyclic coverings given by a torsion canonical bundle.
We start with some definitions in order to fix notations and conventions.

\subsection{Notations and conventions}

Let $Y$ be a nonsingular projective irreducible surface over $\IC$, $H$ an ample divisor on $Y$, and $E$ a coherent sheaf on $Y$. 
We associate the element
$$u(E):=(\rk E,\bc_1(E),\chi(E))\in \Hev:=\IN_0\oplus\NS(Y)\oplus\IZ$$
of sheaf invariants to $E$.
We avoid the elegant notion of a Mukai vector in favour of keeping torsion inside $\NS(Y)$.
For an element $u:=(r,c,\chi)\in \Hev$ we define
\begin{eqnarray*}
P(u)&:=&r \frac{H^2}2n^2+\left(c-r\;\frac{K_Y}2\right).Hn+\chi\\
\Delta(u)&:=&c^2 - 2r\chi+2r^2\chi(\cO_Y)-rc.K_Y\\
\chi(u,u)&:=&\chi(\cO_Y) r^2-\Delta(u)\\
f^*u&:=&(r,f^*c,\deg f\;\chi)
\end{eqnarray*}
If $E$ satisfies $u(E)=u$, then, by Riemann-Roch, its Hilbert polynomial is $P(u)$, its discriminant\footnote{Be aware of different conventions of the discriminant's definition.} is $\Delta(u)$, $u(f^*E)=f^*u$ and
$$\chi(E,E):=\sum_{k=0}^2 \ext^k(E,E) = \chi(u,u)\,,$$ where $\ext^k(E,E):=\dim \Ext^k(E,E)$. We will also write $\hom(E,F):=\dim\Hom(E,F)$ for two coherent sheaves $E,F$.

\subsection{Symplectic moduli spaces and Lagrangian subvarieties}

We now restrict to canonical coverings, and thus assume that $Y$ has a torsion canonical bundle $K_Y$ of order, say $n$. Let $f\colon X\to Y$ be the covering given by $K_Y$. Then $K_X$ is trivial and either $X$ is a K3 surface, $Y$ is an Enriques surface and $n=2$, or $X$ is an abelian surface, $Y$ is a bielliptic surface and $n=2,3,4$ or $6$, see e.g.\ \cite{BHPV}.

As in Section \ref{arbCC} let $\nu$ be the order of $\bc_1(K_Y)$. If $Y$ is an Enriques surfaces, one has that $\nu=n=2$ due to \cite[Proposition 15.2]{BHPV}.
On the other hand, the situation for an elliptic surface $Y$ is more complicated. By \cite[\S 1]{Ser90} $\bH^2(Y,\IZ)$ may or may not be torsion free depending on the type of $Y$. In particular, $\nu$ may be smaller than $n$, and if $\bH^2(Y,\IZ)$ is torsion free then $\bc_1(K_Y)=0$ and $\nu=1$.

Let $u:=(r,c,\chi)\in \Hev$, and let $H$ and $A$ be two ample divisors on $Y$.
We will consider only Gieseker stability and $(H,A)$-stability in the following, and for the latter we assume $r>0$ for simplicity, as is done in \cite[Section 6]{Zow12}.
Let $M_Y(u)$ be the moduli space of Gieseker $H$- or $(H,A)$-semistable sheaves $E$ with $u(E)=u$. It is projective and a subscheme of $M_Y(P(u))$, hence the results of Section \ref{arbdim} apply. Moreover, by setting $m:=\frac {\nu}{\gcf(\nu,r)}$ we are in the situation of Example \ref{Mexa}.3 with $M_Y=M_Y(u)$.
In case we need to distinguish between Gieseker stability and $(H,A)$-stability, we will use $M_{Y;H}(u)$ and $M_{Y;H,A}(u)$, respectively.
Recall that $M_{Y;H,H}(u)=M_{Y;H}(u)$.
We denote the open subscheme of $M_Y(u)$ of stable sheaves on $Y$ by $M_Y^s(u)$ and the smooth locus of $M_Y^s(u)$ by $M_Y^{sm}(u)$.

$M^s_X(f^*u)$ is nonsingular, each connected component has dimension $2-\chi(f^*u,f^*u)$ and it carries a symplectic form due to Mukai \cite{Muk84}.

The morphism $f^*\colon M_Y(u)\to M_X(\deg f\cdot P)$ induced by the pullback by $f$ which is described in Theorem \ref{MHApb} has image inside $M_X(f^*u)$.
Thus we can replace $M_X(\deg f\cdot P)$ by $M_X(f^*u)$ in the results of Section \ref{arbdim} and consider $f^*$ as a morphism $M_Y(u)\to M_X(f^*u)$.

\begin{proposition}\label{pbsymvan}
The pullback of the symplectic form on $M^s_X(f^*u)$ by the restricted morphism $M_Y^{sm}(u)\cap (f^*)^{-1}(M^s_X(f^*u))\stackrel {f^*}\to M^s_X(f^*u)$ vanishes.
\end{proposition}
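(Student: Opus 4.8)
The plan is to identify Mukai's form explicitly and feed the differential of $f^*$ into it. At a point $[\mathcal{F}]\in M^s_X(f^*u)$ the tangent space is $\Ext^1_X(\mathcal{F},\mathcal{F})$, and, since $K_X\cong\cO_X$, Mukai's symplectic form $\omega$ (see \cite{Muk84}) is the alternating pairing
$$\omega(a,b)=\mathrm{tr}_X(a\circ b),\qquad a,b\in\Ext^1_X(\mathcal{F},\mathcal{F}),$$
where $a\circ b\in\Ext^2_X(\mathcal{F},\mathcal{F})$ denotes the Yoneda product and $\mathrm{tr}_X\colon\Ext^2_X(\mathcal{F},\mathcal{F})\to\bH^2(X,\cO_X)\cong\IC$ the trace. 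On the source, over the smooth locus $M_Y^{sm}(u)$ the tangent space at $[E]$ is $\Ext^1_Y(E,E)$, and the differential of the morphism $f^*$ is the natural pullback $f^*\colon\Ext^1_Y(E,E)\to\Ext^1_X(f^*E,f^*E)$; confirming this identification is the first thing I would check, using that a first-order deformation of $E$ is carried by $f^*$ to the corresponding first-order deformation of $f^*E$.

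Next I would use that pullback respects the Yoneda product, so that $f^*a\circ f^*b=f^*(a\circ b)$ for $a,b\in\Ext^1_Y(E,E)$, with $a\circ b\in\Ext^2_Y(E,E)$. Combined with the naturality of the trace map, which yields $\mathrm{tr}_X\circ f^*=f^*\circ\mathrm{tr}_Y$ as maps $\Ext^2_Y(E,E)\to\bH^2(X,\cO_X)$, this gives the chain
$$\big((f^*)^*\omega\big)(a,b)=\mathrm{tr}_X\!\big(f^*a\circ f^*b\big)=\mathrm{tr}_X\!\big(f^*(a\circ b)\big)=f^*\big(\mathrm{tr}_Y(a\circ b)\big).$$
Thus the pulled-back form factors through the composite $\Ext^2_Y(E,E)\xrightarrow{\mathrm{tr}_Y}\bH^2(Y,\cO_Y)\xrightarrow{f^*}\bH^2(X,\cO_X)$.

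It therefore remains to observe that $\bH^2(Y,\cO_Y)=0$. By Serre duality $\bH^2(Y,\cO_Y)\cong\bH^0(Y,K_Y)^\vee$, and since $K_Y$ is a nontrivial torsion line bundle (of order $n\ge2$) it has no nonzero global sections; equivalently $p_g(Y)=0$, which holds uniformly for every Enriques and every bielliptic surface. Hence $\mathrm{tr}_Y(a\circ b)$ already vanishes and $(f^*)^*\omega=0$. The main obstacle is not the final vanishing, which is immediate, but the two naturality statements underlying the middle display: that the differential of the moduli-space morphism $f^*$ is exactly the Ext-pullback, and that the trace commutes with this pullback. Both follow from the functoriality of the trace morphism $R\cH om_Y(E,E)\to\cO_Y$ under the exact, flat pullback $f^*$ together with $f^*K_Y\cong K_X$, but they are the points that require care, the restriction to $M_Y^{sm}(u)\cap(f^*)^{-1}(M^s_X(f^*u))$ being precisely what guarantees that both tangent spaces and the form are available.
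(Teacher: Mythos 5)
Your proof is correct and is essentially the same argument the paper relies on: the paper's own ``proof'' is just a citation to Kim's proof of item 3 of the main theorem in \S 3 of \cite{Kim98a} (stated there for K3 covers of Enriques surfaces and $\mu$-stable sheaves), together with the remark that it carries over to Gieseker and $(H,A)$-stability and to abelian covers. The cited argument is precisely the one you wrote out — the differential of $f^*$ is the Ext-pullback, which is compatible with the Yoneda product and the trace, so the pulled-back Mukai form factors through $\bH^2(Y,\cO_Y)=0$ since $p_g(Y)=0$ for Enriques and bielliptic surfaces.
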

\begin{proof}
This is proven in the proof of \cite[item 3 of the main theorem in \S 3]{Kim98a}, for $X$ a K3 surface and $\mu$-stable sheaves. His proof works as well for Gieseker stability and $(H,A)$-stability, and for $X$ an abelian surface.
\end{proof}

\begin{corollary}\label{ressymvan}
The restriction of the symplectic form on $M^s_X(f^*u)$ to $f^*(M_Y^{sm}(u))\cap M^s_X(f^*u)$ vanishes.
\end{corollary}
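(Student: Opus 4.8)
The plan is to reduce everything to Proposition \ref{pbsymvan} by a generic-smoothness argument; the only genuine point is passing from the vanishing of the \emph{pullback} on the source to the vanishing of the \emph{restriction} on the image.

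First I would pin down the image precisely. Write $S:=M_Y^{sm}(u)\cap (f^*)^{-1}(M^s_X(f^*u))$ for the source of the morphism in Proposition \ref{pbsymvan} and $g:=f^*|_S\colon S\to M^s_X(f^*u)$. Then $g(S)=f^*(M_Y^{sm}(u))\cap M^s_X(f^*u)$: a point of the right-hand side is $[f^*E]$ with $[E]\in M_Y^{sm}(u)$ and $f^*E$ stable, i.e.\ $[E]\in S$, and conversely every point of $g(S)$ is of this shape. So the set $V:=f^*(M_Y^{sm}(u))\cap M^s_X(f^*u)$ appearing in the statement is exactly the image of $g$, and the assertion that its restriction vanishes means $i^*\omega=0$ on the smooth locus $V^{sm}$, where $\omega$ denotes the symplectic form and $i\colon V^{sm}\hookrightarrow M^s_X(f^*u)$.

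Next I would argue one irreducible component at a time. Fix a component $V_0$ of the reduced scheme $V$ and a component $S_0$ of $S$ dominating it; note that $S$ is nonsingular, being open in $M_Y^{sm}(u)$. Corestrict $g$ to a dominant morphism $S_0\to V_0$. Since we work over $\IC$, generic smoothness applies: there is a dense open $U\subseteq V_0^{sm}$ such that for $p\in U$ and a suitable $q\in g^{-1}(p)\subseteq S_0$ the differential $dg_q\colon T_qS_0\to T_pV_0$ is surjective. For any $v,w\in T_pV_0$, choosing preimages $\tilde v,\tilde w\in T_qS_0$ gives
\[
\omega(v,w)=\omega(dg_q\tilde v,\,dg_q\tilde w)=(g^*\omega)(\tilde v,\tilde w)=0
\]
by Proposition \ref{pbsymvan}. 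Hence $i^*\omega$ vanishes on the dense open subset $U$ of $V_0^{sm}$.

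Finally, $i^*\omega$ is a holomorphic $2$-form on $V_0^{sm}$, which is connected, being the smooth locus of an irreducible variety; as it vanishes on the nonempty open set $U$, the identity principle forces $i^*\omega\equiv 0$ on $V_0^{sm}$. Letting $V_0$ range over the components of $V$ yields $i^*\omega=0$ on $V^{sm}$, which is the claim. I expect the one delicate step to be the appeal to generic smoothness guaranteeing that $\operatorname{im}(dg_q)$ fills out the whole tangent space $T_pV_0$ at a general point: without surjectivity of the differential one could only conclude that $\omega$ annihilates the tangent directions coming from $S$, not that the image itself is isotropic. Everything else is formal.
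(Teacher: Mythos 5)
Your proof is correct and takes the route the paper intends: the paper states this as an immediate consequence of Proposition \ref{pbsymvan} and writes no proof at all. Your generic-smoothness plus identity-principle argument is exactly the standard justification for the one step the paper leaves implicit, namely passing from vanishing of the pullback $g^*\omega$ on the source to vanishing of the restriction of $\omega$ to the image, which does require surjectivity of the differential at general points.
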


\noindent This leads to the question whether the subvariety $f^*(M_Y^{sm}(u))\cap M^s_X(f^*u)$ is a Lagrangian subvariety of $M^s_X(f^*u)$. Hence we need to calculate dimensions.

\begin{proposition}\label{CCfEst}
Let $E$ be a coherent sheaf on $Y$ with $u(E)=u$ such that $f^*E$ is stable. Then 
$E$ is stable,
$E\not\cong E\otimes K_Y^j$ for $1\le j<n$ and
$\ext^2(E,E)=0$.
$M_Y(u)$ is nonsingular in $[E]$ of expected dimension
\begin{eqnarray*}
\dim_{E} M_Y(u)&=&\ext^1(E,E)\,,\quad\textrm{and}\\
\dim_{f^*E} M_X(f^*u)&=&\ext^1(f^*E,f^*E)=2-n+n \dim_{E} M_Y(u)\,.
\end{eqnarray*}
In particular, one has that
$\ext^1(E,E)=0$  if and only if $\ext^1(f^*E,f^*E)=0$, and that
${\dim_{f^*E} M_X(f^*u)=2\dim_{E} M_Y(u)}$ if and only if $n=2$ or $\ext^1(E,E)=1$.
\end{proposition}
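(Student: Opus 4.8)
The plan is to build everything on the hypothesis that $f^*E$ is stable and the structural results already established for the pullback. First I would deduce that $E$ is stable: by Proposition \ref{PbStab}, stability of $f^*E$ forces stability of $E$. Next, since $f^*E$ is stable it is in particular simple, so Lemma \ref{CycShDec4} (applied with $L=K_Y$, recalling that the covering is the canonical one so the defining line bundle is exactly $K_Y$) gives both that $E$ is simple and that $\Hom(E,E\otimes K_Y^j)=0$ for $1\le j<n$; the final sentence of that lemma yields $E\not\cong E\otimes K_Y^j$ for $1\le j<n$. For the vanishing $\ext^2(E,E)=0$, I would use Serre duality on the surface $Y$: $\Ext^2(E,E)\cong\Hom(E,E\otimes K_Y)^\vee$, and $\Hom(E,E\otimes K_Y)=\Hom(E,E\otimes K_Y^1)=0$ is exactly the $j=1$ instance just obtained. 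Hence $\ext^2(E,E)=0$.

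With $\ext^2(E,E)=0$ in hand, the smoothness and dimension statement for $M_Y(u)$ at $[E]$ is standard deformation theory: the obstruction space is $\Ext^2(E,E)$ (or its trace-free part), so its vanishing makes $M_Y(u)$ nonsingular at $[E]$, and the tangent space is $\Ext^1(E,E)$, giving $\dim_E M_Y(u)=\ext^1(E,E)$. For the dimension of $M_X(f^*u)$ at $[f^*E]$, I would combine two ingredients. First, $M_X^s(f^*u)$ is nonsingular of dimension $2-\chi(f^*u,f^*u)$ by the Mukai theory quoted just before the proposition, and since $f^*E$ is stable this equals $\ext^1(f^*E,f^*E)$ (using $\hom=\ext^2=1,0$ appropriately via simplicity of $f^*E$ and triviality of $K_X$). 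Second, I would relate the two Euler characteristics: the multiplicativity $\chi(f^*E,f^*E)=\deg f\cdot\chi(E,E)=n\,\chi(E,E)$ follows from the decomposition $\Hom(f^*E,f^*E)\cong\bigoplus_{j=0}^{n-1}\Hom(E,E\otimes K_Y^j)$ of Lemma \ref{CycShDec4} extended to all $\Ext^k$ (equivalently from $\chi(f^*E,f^*E)=\chi(f^*u,f^*u)$ together with the fact that on an abelian or K3 surface the alternating sum collapses appropriately), or more cleanly from $\chi(E,E)=\sum_k(-1)^k\ext^k$ and the projection-formula identity $\Ext^k(f^*E,f^*E)\cong\bigoplus_{j}\Ext^k(E,E\otimes K_Y^j)$.

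The cleanest route to the explicit formula is to track Euler characteristics on $Y$. On $Y$ one has $\chi(E,E)=\hom(E,E)-\ext^1(E,E)+\ext^2(E,E)=1-\ext^1(E,E)$, using simplicity and $\ext^2(E,E)=0$. On $X$ one has $\chi(f^*E,f^*E)=1-\ext^1(f^*E,f^*E)+1=2-\ext^1(f^*E,f^*E)$, using simplicity of $f^*E$ and $\ext^2(f^*E,f^*E)=\hom(f^*E,f^*E\otimes K_X)=\hom(f^*E,f^*E)=1$ since $K_X$ is trivial. Combining with $\chi(f^*E,f^*E)=n\,\chi(E,E)=n(1-\ext^1(E,E))$ gives
\begin{eqnarray*}
2-\ext^1(f^*E,f^*E)&=&n-n\,\ext^1(E,E)\,,
\end{eqnarray*}
that is $\ext^1(f^*E,f^*E)=2-n+n\,\ext^1(E,E)=2-n+n\dim_E M_Y(u)$, as claimed. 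The two ``in particular'' equivalences are then immediate algebra: $\ext^1(E,E)=0\Leftrightarrow\ext^1(f^*E,f^*E)=2-n+0$, which must be nonnegative and equals zero exactly when $n=2$, and in general $\ext^1(f^*E,f^*E)=0\Leftrightarrow n\,\ext^1(E,E)=n-2$, forcing $\ext^1(E,E)=0$ and $n=2$; while $\dim_{f^*E}M_X=2\dim_E M_Y$ reads $2-n+n a=2a$ with $a=\ext^1(E,E)$, i.e. $(n-2)(a-1)=0$, holding iff $n=2$ or $a=1$.

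The main obstacle I anticipate is the bookkeeping behind the identity $\chi(f^*E,f^*E)=n\,\chi(E,E)$ and, relatedly, verifying $\ext^2(f^*E,f^*E)=1$ cleanly. The former needs the extension of Lemma \ref{CycShDec4} from $\Hom$ to all $\Ext^k$ (via $\Ext^k(f^*E,f^*E)\cong\Ext^k(E,f_*f^*E)\cong\bigoplus_{j}\Ext^k(E,E\otimes K_Y^j)$, using Lemma \ref{CycShDec1}); taking alternating sums and noting that the higher twisted terms need not individually vanish but their Euler characteristics sum correctly. Everything else is formal once the two Euler-characteristic expressions are in place.
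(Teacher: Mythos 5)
Your proposal is correct, and its skeleton coincides with the paper's: stability of $E$ via Proposition \ref{PbStab}, simplicity and the vanishing $\Hom(E,E\otimes K_Y^j)=0$ via Lemma \ref{CycShDec4}, Serre duality giving $\ext^2(E,E)=\hom(E,E\otimes K_Y)=0$ and $\ext^2(f^*E,f^*E)=\hom(f^*E,f^*E)=1$, Mukai's smoothness result on $M_X^s(f^*u)$, and finally the identity $\ext^1(f^*E,f^*E)=2-n+n\,\ext^1(E,E)$ obtained by comparing Euler characteristics; your algebra for the two ``in particular'' equivalences matches the paper's remark that $\ext^1\ge 0$ and $n\ge 2$. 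The one place you genuinely diverge is the mechanism behind $\chi(f^*E,f^*E)=n\,\chi(E,E)$. The paper isolates this in Lemma \ref{CycCanDim} and proves the stronger, fully general statement $\chi(f^*E,f^*F)=\deg f\cdot\chi(E,F)$ (valid for any finite \'etale cover, any dimension, any pair of sheaves) via the isomorphism $f^*\mathcal{H}om(E,F)\cong\mathcal{H}om(f^*E,f^*F)$, Mumford's $\chi(f^*\mathcal{G})=\deg f\cdot\chi(\mathcal{G})$, and locally free resolutions. You instead use the adjunction decomposition $\Ext^k(f^*E,f^*E)\cong\Ext^k(E,f_*f^*E)\cong\bigoplus_j\Ext^k(E,E\otimes K_Y^j)$ (extending the $\Hom$-level isomorphism of Lemma \ref{CycShDec4} via Lemma \ref{CycShDec1}), which is valid since $f^*$ and $f_*$ are both exact here; this route stays inside the cyclic setting and makes the group action visible, but it requires one further justification you only gesture at: each summand satisfies $\chi(E,E\otimes K_Y^j)=\chi(E,E)$ because $K_Y$ is torsion, so $\bc_1(K_Y^j)$ vanishes in rational cohomology and Riemann--Roch gives equality term by term. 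With that line added, your argument closes the only ``bookkeeping'' gap you flagged, and the two proofs become fully equivalent in content, with the paper's version buying generality and yours buying economy of means.
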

\begin{proof}
$E$ is stable by Proposition \ref{PbStab}. In particular, $E$ is simple and by Lemma \ref{CycShDec4} one has $E\not\cong E\otimes K_Y^j$ for $1\le j<n$.
$f^*E$ is simple as well, thus by the following Lemma \ref{CycCanDim} 
\begin{eqnarray}
\ext^1(f^*E,f^*E)=2-n+n\; \ext^1(E,E) \label{ext1fEfE}
\end{eqnarray}
and $\ext^2(E,E)=0$. Therefore $M_Y(u)$ is nonsingular in $[E]$ of expected dimension $\ext^1(E,E)$.
As $X$ is K3 or abelian, $M^s_X(f^*u)$ is nonsingular of expected dimension $2-\chi(f^*u,f^*u)=\ext^1(f^*E,f^*E)$ as stated already above.
Equation (\ref{ext1fEfE}) yields the two equivalences, as $\ext^1$ is always nonnegative and $n\ge 2$.
\end{proof}

\noindent In the proof we used the following 

\begin{lemma}\label{CycCanDim}
Let $E$ be a coherent sheaf on $Y$. Then
$$\ext^1(f^*E,f^*E)=2\hom(f^*E,f^*E)-n\left( \hom(E,E)+\hom(E,E\otimes K_Y)\right)+n\; \ext^1(E,E)\,.$$
If $f^*E$ is simple, then 
$\ext^1(f^*E,f^*E)=2-n+n\; \ext^1(E,E)\,,$
and
$\ext^2(E,E)=0$.
\end{lemma}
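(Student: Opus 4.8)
The plan is to compute the Euler pairing $\chi(f^*E,f^*E):=\sum_k(-1)^k\ext^k(f^*E,f^*E)$ in two different ways and then to extract $\ext^1$ by playing the two expressions off against Serre duality. The point is that $\ext^1(f^*E,f^*E)$ is the only term of $\chi(f^*E,f^*E)$ that is not already Serre-dual to $\hom(f^*E,f^*E)$, so once $\chi(f^*E,f^*E)$ is expressed through data on $Y$, the formula drops out.

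First I would reduce everything on $X$ to data on $Y$ via adjunction. Since $f$ is finite, $f_*$ is exact and is the right adjoint of the exact functor $f^*$, so $f_*$ carries injectives to injectives and the adjunction prolongs to all derived functors, giving $\Ext^k_X(f^*E,f^*E)\cong\Ext^k_Y(E,f_*f^*E)$ for every $k$. Combined with Lemma \ref{CycShDec1} (for $L=K_Y$), which identifies $f_*f^*E\cong\bigoplus_{j=0}^{n-1}E\otimes K_Y^{j}$, this yields $\ext^k(f^*E,f^*E)=\sum_{j=0}^{n-1}\ext^k(E,E\otimes K_Y^{j})$ for all $k$; the case $k=0$ is exactly the isomorphism already used in Lemma \ref{CycShDec4}. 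Taking the alternating sum gives $\chi(f^*E,f^*E)=\sum_{j=0}^{n-1}\chi(E,E\otimes K_Y^{j})$. By Hirzebruch--Riemann--Roch each summand is determined by the rational Chern character of $E\otimes K_Y^{j}$, and because $\bc_1(K_Y)$ is torsion it vanishes in $\bH^2(Y,\IQ)$, so $E\otimes K_Y^{j}$ has the same rational Chern character as $E$. Hence $\chi(E,E\otimes K_Y^{j})=\chi(E,E)$ for every $j$, and $\chi(f^*E,f^*E)=n\,\chi(E,E)$.

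Next I would bring in Serre duality on both surfaces. On $X$ the canonical bundle is trivial, so $\ext^2(f^*E,f^*E)=\hom(f^*E,f^*E)$ and therefore $\chi(f^*E,f^*E)=2\hom(f^*E,f^*E)-\ext^1(f^*E,f^*E)$. On the surface $Y$, Serre duality gives $\ext^2(E,E)=\hom(E,E\otimes K_Y)$, whence $\chi(E,E)=\hom(E,E)+\hom(E,E\otimes K_Y)-\ext^1(E,E)$. Substituting $\chi(f^*E,f^*E)=n\,\chi(E,E)$ from the previous step and solving for $\ext^1(f^*E,f^*E)$ produces exactly the stated formula $2\hom(f^*E,f^*E)-n\bigl(\hom(E,E)+\hom(E,E\otimes K_Y)\bigr)+n\,\ext^1(E,E)$.

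For the simple case I would specialise: if $f^*E$ is simple then $\hom(f^*E,f^*E)=1$, and by Lemma \ref{CycShDec4} the sheaf $E$ is simple, so $\hom(E,E)=1$, and $\hom(E,E\otimes K_Y^{j})=0$ for $1\le j<n$, in particular $\hom(E,E\otimes K_Y)=0$ since $n\ge 2$. Plugging these three values into the general formula collapses it to $\ext^1(f^*E,f^*E)=2-n+n\,\ext^1(E,E)$, and $\ext^2(E,E)=\hom(E,E\otimes K_Y)=0$ follows from the Serre-duality identity on $Y$ used above. The only genuinely delicate point is the prolongation of the adjunction to higher Ext in the first step, i.e.\ the claim that no Tor- or higher-direct-image corrections intervene; this rests on the finiteness and flatness of $f$, which make both $f^*$ and $f_*$ exact. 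One could instead bypass the degreewise decomposition and obtain $\chi(f^*E,f^*E)=n\,\chi(E,E)$ directly from $\mathrm{ch}(f^*E)=f^*\mathrm{ch}(E)$, $\mathrm{td}(X)=f^*\mathrm{td}(Y)$ and $\int_X f^*(-)=n\int_Y(-)$, but the adjunction route is preferable here since it reuses the machinery already set up for Lemma \ref{CycShDec4}.
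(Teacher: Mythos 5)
Your proof is correct, but it reaches the key identity $\chi(f^*E,f^*E)=n\,\chi(E,E)$ by a genuinely different route than the paper. The paper proves the more general statement $\chi(f^*E,f^*F)=\deg f\,\chi(E,F)$ directly: for locally free $E$ it identifies $\cH om(f^*E,f^*F)\cong f^*\cH om(E,F)$ and applies Mumford's theorem $\chi(f^*G)=\deg f\,\chi(G)$, then handles general $E$ by a locally free resolution and additivity of $\chi$; this argument needs neither the cyclic structure of $f$ nor any hypothesis on $K_Y$, and works for finite \'etale covers in any dimension (the paper notes this explicitly). You instead exploit the cyclic structure: the adjunction $\Ext^k_X(f^*E,f^*E)\cong\Ext^k_Y(E,f_*f^*E)$ — correctly justified, since $f^*$ exact makes $f_*$ preserve injectives and $f_*$ exact keeps resolutions injective — together with Lemma \ref{CycShDec1} gives the degreewise decomposition $\ext^k(f^*E,f^*E)=\sum_{j=0}^{n-1}\ext^k(E,E\otimes K_Y^j)$, and then Hirzebruch--Riemann--Roch plus the torsionness of $\bc_1(K_Y)$ collapses each Euler characteristic $\chi(E,E\otimes K_Y^j)$ to $\chi(E,E)$. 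Your route buys strictly finer information (an equality of Ext dimensions in each degree, not just of Euler characteristics) and reuses the machinery of Lemmas \ref{CycShDec1} and \ref{CycShDec4}, but it is tied to the canonical-covering setting; the paper's route is coarser but more robust, which is why it is stated for arbitrary coherent $E$, $F$ and arbitrary finite coverings. From the identity $\chi(f^*E,f^*E)=n\,\chi(E,E)$ onward — Serre duality with trivial $K_X$ on one side, $\ext^2(E,E)=\hom(E,E\otimes K_Y)$ on the other, and the specialisation to the simple case via Lemma \ref{CycShDec4} — your argument and the paper's coincide.
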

\begin{proof}
We first prove that $\chi(f^*E,f^*F)=\deg f\; \chi(E,F)$ for two coherent sheaves $E$ and $F$, which holds also if $f$ is not cyclic and $Y$ has higher dimension.
If $E$ is locally free, then $f^*E$ is locally free as well, and one has that
\begin{eqnarray*}
\chi(f^*E,f^*F)&=&\chi( \cH om(f^*E,f^*F) )\\
&=&\chi( f^*\cH om(E,F) )\\
&=&\deg f\; \chi( \cH om(E,F) )=\deg f\; \chi(E,F)\,, 
\end{eqnarray*}
where we used the canonical isomorphism
$f^*\cH om(E,F) \cong \cH om(f^*E,f^*F)$
\cite{EGA1} (6.7.6) and \cite[\S 12 Theorem 2]{Mum70}.
If $E$ is not locally free, consider a locally free resolution $E_\bullet\twoheadrightarrow E$, which gives a locally free resolution $f^*E_\bullet\twoheadrightarrow f^*E$, and use the additivity of $\chi$.
Thus the above claim holds. As $Y$ is a surface, by Serre duality one has
\begin{eqnarray*}
\ext^1(f^*E,f^*E)&=&\hom(f^*E,f^*E)+\hom(f^*E,f^*E\otimes K_X)\\
&&-n\left( \hom(E,E)+\hom(E,E\otimes K_Y)\right)+n\; \ext^1(E,E)\\
&=&2\hom(f^*E,f^*E)-n\left( \hom(E,E)+\hom(E,E\otimes K_Y)\right)+n\; \ext^1(E,E)\;,
\end{eqnarray*}
whilst the last equation holds due to the fact that the canonical bundle $K_X$ is trivial.\\
If $f^*E$ is simple, then Lemma \ref{CycShDec4} yields that $E$ is simple and $\hom(E,E\otimes K_Y)=0$. 
Hence
$$\ext^1(f^*E,f^*E)=2-n+n\; \ext^1(E,E)\,,$$
and by Serre duality one has $\ext^2(E,E)=0$.
\end{proof}

\noindent The two main results of this section now are contained in the following Theorem \ref{CC} and in Theorem \ref{CCsst}.

\begin{theorem}\label{CC}
Let $Y$ be an Enriques or bielliptic surface, $f\colon X\to Y$ the covering given by the torsion canonical bundle $K_Y$ of order $n$ on $Y$,
$\nu$ the order of $\bc_1(K_Y)$,
$u=(r,c,\chi)\in \Hev$, $M_Y(u)$ the moduli space of Gieseker or $(H,A)$-semistable sheaves $E$ with $u(E)=u$ and $M_Y^s(u)$ the open subscheme of $M_Y(u)$ of stable sheaves on $Y$.
Assume that there is a coherent sheaf $E$ on $Y$ with $u(E)=u$ such that $f^*E$ is stable and let
$\Sigma:=\{ [E]\in M^s_Y(u) \;|\; \ord_E(K_Y)\neq n\}$\footnote{See Definition \ref{Eorder}.}.
Then 
\begin{enumerate}
\item $M^s_Y(u)\setminus\Sigma$ is a nonempty and nonsingular open subset of $M^s_Y(u)$,
\item $f^*(M^s_Y(u)\setminus\Sigma)=f^*(M_Y(u))\cap M^s_X(f^*u)$,
\item $f^*(\Sigma)\cap M^s_X(f^*u)=\emptyset$,
\item $f^*$ induces a $\frac{n\;\gcf(\nu,r)}\nu:1$ covering $M^s_Y(u)\to f^*(M^s_Y(u))$ branched along $\Sigma$, and
\item $\dim M^s_X(f^*u)=2-n+n \dim f^*(M^s_Y(u)\setminus\Sigma)$.
\end{enumerate}
In particular, $f^*(M^s_Y(u)\setminus\Sigma)$ is a Lagrangian subvariety of $M^s_X(f^*u)$ if and only if $n=2$ or $\dim f^*(M^s_Y(u)\setminus\Sigma)=1$.
\end{theorem}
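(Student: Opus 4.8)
The plan is to recognise all five assertions as consequences of the cyclic-covering machinery already established, with the single decisive observation that the standing hypothesis is exactly the nontrivial condition of Theorem~\ref{CycStab}. Since there is an $E$ with $u(E)=u$ and $f^*E$ stable, condition~2 of Theorem~\ref{CycStab} holds for the scheme $M^s_Y(u)$ (which is in the situation of Example~\ref{Mexa}.3 with $m=\nu/\gcf(\nu,r)$), and hence so do the equivalent conditions~1, 3 and~4. In particular $\ord_{M^s_Y(u)}(K_Y)=\max\{\ord_E(K_Y)\mid[E]\in M^s_Y(u)\}=n$, so the set $\Sigma$ defined here coincides with the set $\{[E]\mid\ord_E(K_Y)\neq\ord_{M^s_Y(u)}(K_Y)\}$ appearing in Theorem~\ref{CycStab}. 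Reading off condition~4 gives $M^s_Y(u)\setminus\Sigma=(f^*)^{-1}(M^s_X(f^*u))\neq\emptyset$; in particular every $[E]\in\Sigma$ has $f^*E$ non-stable, which is assertion~(3). Reading off condition~3 gives assertion~(2) once one notes (comment after Proposition~\ref{PbStab}) that a strictly semistable sheaf pulls back to a strictly semistable one, so that $f^*(M_Y(u))\cap M^s_X(f^*u)=f^*(M^s_Y(u))\cap M^s_X(f^*u)$.

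For assertion~(1), openness follows because $M^s_Y(u)$ is open in $M_Y(u)$ and $\Sigma$ is closed by Proposition~\ref{Mdecomp}(6) applied to the scheme $M^s_Y(u)$ with $j=\ord_{M^s_Y(u)}(K_Y)=n$ (so that $\Sigma=\Sigma_j$). Nonsingularity is pointwise: every $[E]\in M^s_Y(u)\setminus\Sigma$ has $f^*E$ stable by condition~4, and Proposition~\ref{CCfEst} then asserts that $M_Y(u)$ is nonsingular at $[E]$ of expected dimension $\ext^1(E,E)$. Assertion~(4) comes from the factorisation $f^*=\vartheta\circ\pi$ of Proposition~\ref{pbFact}: by Example~\ref{Mexa}.3 the map $\vartheta$ is injective on $M^s_Y(u)/\langle\varphi\rangle$, so $f^*$ and $\pi$ have the same fibres over $f^*(M^s_Y(u))$. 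Applying Proposition~\ref{Mdecomp}(7) to the scheme $M^s_Y(u)$ with $j=n$, where $M_j=M^s_Y(u)$, $\Sigma_j=\Sigma$, and $M_j\neq\Sigma_j$ by condition~1, shows $\pi$ is an $\frac{\ord_{M^s_Y(u)}(K_Y)}{m}:1$ covering branched along $\Sigma$; substituting $\ord_{M^s_Y(u)}(K_Y)=n$ and $m=\nu/\gcf(\nu,r)$ turns the degree into $\frac{n\,\gcf(\nu,r)}{\nu}$, and the injectivity of $\vartheta$ transports this branched-covering structure to $f^*\colon M^s_Y(u)\to f^*(M^s_Y(u))$.

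Assertion~(5) is the pointwise dimension identity $\dim_{f^*E}M_X(f^*u)=2-n+n\dim_EM_Y(u)$ of Proposition~\ref{CCfEst}, combined with two facts: each component of $M^s_X(f^*u)$ has the constant dimension $2-\chi(f^*u,f^*u)$ (stated before Proposition~\ref{pbsymvan}), and $f^*$ is finite onto its image by~(4), whence $\dim f^*(M^s_Y(u)\setminus\Sigma)=\dim(M^s_Y(u)\setminus\Sigma)=\dim_EM_Y(u)$ for $[E]$ in the relevant component. For the concluding equivalence, note $M^s_Y(u)\setminus\Sigma\subseteq M^{sm}_Y(u)$ by~(1), so Corollary~\ref{ressymvan} exhibits $f^*(M^s_Y(u)\setminus\Sigma)$ as an isotropic subvariety of $M^s_X(f^*u)$; it is Lagrangian precisely when its dimension $d$ is half of $\dim M^s_X(f^*u)=2-n+nd$, i.e.\ when $d(2-n)=2-n$, which holds if and only if $n=2$ or $d=1$.

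I expect the only genuine friction to be the bookkeeping of the last two stages: keeping the two descriptions of $\Sigma$ aligned (which forces one to invoke Theorem~\ref{CycStab} first, to secure $\ord_{M^s_Y(u)}(K_Y)=n$) and converting the quotient degree $n/m$ into $\tfrac{n\,\gcf(\nu,r)}{\nu}$ through the specific $m$ of Example~\ref{Mexa}.3, together with the care needed to read the pointwise formula of Proposition~\ref{CCfEst} as a statement about the constant dimension of the relevant component of $M^s_X(f^*u)$ and of its image under the finite map $f^*$. All the geometric substance — stability of $E$, the vanishing $\ext^2(E,E)=0$, the dimension formula, and the branched-covering degree — is already furnished by Theorems~\ref{CycStab} and~\ref{MHApb} and Propositions~\ref{Mdecomp}, \ref{pbFact} and~\ref{CCfEst}, so the remaining work is essentially assembly and numerical verification.
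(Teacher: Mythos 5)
Your proposal is correct and follows essentially the same route as the paper's proof: invoking Theorem \ref{CycStab} via its condition 2 (so that $\ord_{M^s_Y(u)}(K_Y)=n$ and the two descriptions of $\Sigma$ agree), then Proposition \ref{CCfEst} for nonsingularity and the dimension formula, Propositions \ref{pbFact} and \ref{Mdecomp}(7) with $j=n$ for the $\frac{n\,\gcf(\nu,r)}{\nu}:1$ branched covering, and Corollary \ref{ressymvan} for the Lagrangian equivalence. In fact you are slightly more explicit than the paper on two points it leaves implicit — the identification $f^*(M_Y(u))\cap M^s_X(f^*u)=f^*(M^s_Y(u))\cap M^s_X(f^*u)$ via Theorem \ref{MHApb}/the remark after Proposition \ref{PbStab}, and the passage from the pointwise dimension identity to the global one.
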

\begin{proof}
A coherent sheaf $E$ on $Y$ with $u(E)=u$ such that $f^*E$ is stable is stable itself by Proposition \ref{PbStab}. In particular, $M^s_Y(u)\neq\emptyset$.
Let $m=\frac \nu{\gcf(\nu,r)}$ as at the beginning of this section, where $\nu$ is still the order of $\bc_1(K_Y)$. Recall that we are in the situation of Example \ref{Mexa}.3 with $M_Y=M_Y(u)$.
Item 2 of Theorem \ref{CycStab} holds by assumption, hence its other items hold as well, where we replace $M_X(nP)$ by $M_X(f^*u)$ as mentioned above:
\begin{itemize}
\item The $M_Y^s$-order of $K_Y$ (Definition \ref{Morder}) is $n$,
\item $f^*(M_Y^s\setminus\Sigma)=f^*(M_Y)\cap M^s_X(f^*u)\neq \emptyset$, and
\item $M_Y^s\setminus\Sigma=(f^*)^{-1}(M^s_X(f^*u))\neq \emptyset$. In particular, $f^*(\Sigma)\cap M^s_X(f^*u)=\emptyset$.
\end{itemize}
Therefore items 1-3 of our claim hold.

As all sheaves with class in $f^*(M^s\setminus\Sigma)$ are stable, Proposition \ref{CCfEst} yields that $M^s\setminus\Sigma$ is nonsingular and $\dim M^s_X(f^*u)=2-n+n \dim (M^s\setminus\Sigma)$.
By Proposition \ref{pbFact} and Proposition \ref{Mdecomp} item 7 with $j=n$ the morphism $f^*$ induces a $\frac{n\;\gcf(\nu,r)}\nu:1$ covering $M^s\to f^*(M^s)$ branched along $\Sigma$.
In particular, $\dim (M^s\setminus\Sigma)=\dim f^*(M^s\setminus\Sigma)$.

By Corollary \ref{ressymvan} the restriction of the symplectic form on $M^s_X(f^*u)$ to $f^*(M^s\setminus\Sigma)$ vanishes, thus
$f^*(M^s\setminus\Sigma)$ is a Lagrangian subvariety if and only if $\dim M^s_X(f^*u)=2\dim f^*(M^s\setminus\Sigma)$.
This is equivalent to $n=2$ or $\dim f^*(M^s_Y(u)\setminus\Sigma)=1$ due to item 5.
\end{proof}

If $f^*(M^s_Y(u))\subseteq M^s_X(f^*u)$ --- e.g.\ if $\gcf(n,r)=1$ (use Theorem \ref{gcdnr1}) or if $f^*u$ is primitive and $f^*H$ (or $f^*A$, respectively) is general (see the appendix) --- and if $M^s_Y(u)$ is nonempty, then the theorem holds with $\Sigma=\emptyset$.
In particular, the covering $M^s_Y(u)\stackrel {f^*} \to f^*(M^s_Y(u))$ is unbranched.

If $f^*(M^s_Y(u))\cap M^s_X(f^*u)=\emptyset$, i.e.\ if any stable sheaf becomes strictly semistable after pullback, then this construction does not seem to yield any Lagrangian subvariety, as $f^*(M^s_Y(u))$ is outside the locus where the symplectic form is defined.

Theorem \ref{CC} is concerned with the case that there is a coherent sheaf $E$ on $Y$ with $u(E)=u$ such that $f^*E$ is stable.
This means that in particular, $E$ is already stable by Proposition \ref{PbStab}.
We now want to consider the opposite case, i.e.\ $E$ is stable but for all such $E$ the pullback $f^*E$ is not stable.

\begin{theorem}\label{CCsst}
Let $Y$ be an Enriques or bielliptic surface, $f\colon X\to Y$ the covering given by the torsion canonical bundle $K_Y$ of order $n$ on $Y$,
$\nu$ the order of $\bc_1(K_Y)$,
$u=(r,c,\chi)\in \Hev$, $M_Y(u)$ the moduli space of Gieseker or $(H,A)$-semistable sheaves $E$ with $u(E)=u$ and $M_Y^s(u)$ the open subscheme of $M_Y(u)$ of stable sheaves on $Y$.
Assume that $M^s_Y(u)\neq\emptyset$ and $f^*(M^s_Y(u))\cap M^s_X(f^*u)=\emptyset$.
Moreover, let $\ell$ be the $M^s_Y(u)$-order of $L$ defined in Definition \ref{Morder}, and $m:=\frac \nu{\gcf(\nu,r)}$ as in Example \ref{Mexa}.3.
Then
\begin{enumerate}
\item $m<n$;
\item for all $E\in M^s_Y(u)$ one has that $E\cong E\otimes K_Y^{\ell}$;
\item if $n$ is a prime power then $\ell\neq n$ and $f^*$ induces an $\frac {\ell}{m}:1$ covering $M^s_Y(u)\to f^*(M^s_Y(u))$ branched along the closed subscheme $\Sigma:= \{ [E] \in M^s_Y(u) \;|\; \ord_E(L)\neq\ell\}$.
\end{enumerate}
\end{theorem}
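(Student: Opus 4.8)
The plan is to derive all three items from the cyclic-covering machinery already in place, chiefly Theorem \ref{CycStab}, Theorem \ref{gcdnr1}, Corollary \ref{ppQuCov} and Proposition \ref{pbFact}, after first translating the standing hypothesis $f^*(M^s_Y(u))\cap M^s_X(f^*u)=\emptyset$ into that language. Since condition 3 of Theorem \ref{CycStab} demands $f^*(M_Y^s)\cap M_X^s\neq\emptyset$, our hypothesis makes condition 3 fail, and hence, by the equivalence in that theorem, all four of its conditions fail. In particular the negation of condition 1 — that is, the failure of $\ord_{M_Y^s}(L)=\max\{\ord_E(L)\}=n$ — is the logical engine driving items 1 and 3.

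For item 1 I would argue by contradiction. Combining $m=\frac{\nu}{\gcf(\nu,r)}\big|\frac{n}{\gcf(n,r)}\big|n$ (the divisibilities recorded after Lemma \ref{PreimDiv3} and in Lemma \ref{orderDiv}), the equality $m=n$ would force $\frac{n}{\gcf(n,r)}=n$, i.e.\ $\gcf(n,r)=1$; but then Theorem \ref{gcdnr1} gives $f^*(M_Y^s)\subseteq M^s_X(f^*u)$, and since $M^s_Y(u)\neq\emptyset$ this contradicts the hypothesis. Hence $m<n$. Item 2 is then immediate from the definitions: $\ell=\ord_{M_Y^s}(L)$ is the least common multiple of the numbers $\ord_E(L)$, so $\ord_E(L)\big|\ell$ for every stable $E$, and Lemma \ref{PreimDiv1} with $j=0$, $k=\ell$ yields $E\cong E\otimes K_Y^{\ell}$.

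For item 3 assume $n$ is a prime power $p^a$. By Lemma \ref{orderDiv} every $\ord_E(L)$ divides $n$ and is thus a power of $p$; as the divisors of a prime power are totally ordered, the least common multiple $\ell$ coincides with $\max\{\ord_E(L)\}$. Consequently the failure of condition 1 of Theorem \ref{CycStab} collapses to $\ell\neq n$, which is the first assertion. For the covering statement I would invoke Corollary \ref{ppQuCov} with $j=\ell$: it makes $\pi\colon M_Y^s\to M_Y^s/\langle\varphi\rangle$ an $\frac{\ell}{m}:1$ covering branched exactly along $\Sigma=\{[E]\mid\ord_E(L)\neq\ell\}$, the ratio being integral since $m\big|\ell$ by the divisibilities above. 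As we are in the situation of Example \ref{Mexa}.3, Proposition \ref{pbFact} ensures $\vartheta$ is injective on $M_Y^s/\langle\varphi\rangle$, so the factorisation $f^*=\vartheta\circ\pi$ identifies this covering with $M_Y^s\to f^*(M_Y^s)$, transporting both the degree and the branch locus.

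The argument is largely bookkeeping atop the earlier results; the one point I expect to require genuine care, and the main obstacle, is the passage in item 3 from ``condition 1 of Theorem \ref{CycStab} fails'' to the clean conclusion $\ell\neq n$. A priori that failure only yields the disjunction $\ell\neq\max\{\ord_E(L)\}$ \emph{or} $\ell\neq n$, and it is precisely the prime-power hypothesis that forces $\ell=\max\{\ord_E(L)\}$ and thereby isolates $\ell\neq n$. Without primality one cannot separate the branching behaviour from the possibility that distinct maximal $E$-orders fail to be comparable, so I would keep the prime-power reduction explicit rather than attempt a general-$n$ version.
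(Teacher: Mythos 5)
Your proposal is correct and follows essentially the same route as the paper: items 2 and 3 are argued exactly as there (item 2 from the definition of the $M_Y^s(u)$-order as a least common multiple, item 3 by using the prime-power hypothesis to force $\ell=\max\{\ord_E(L)\}$ so that the failure of condition 1 of Theorem \ref{CycStab} yields $\ell\neq n$, and then Proposition \ref{pbFact} together with Corollary \ref{ppQuCov} for the branched covering). The only divergence is item 1, where the paper deduces $\max\{\ord_E(L)\}<n$ directly from the implication $(1)\Rightarrow(3)$ of Theorem \ref{CycStab} and then uses $m\,|\,\ord_E(L)$, whereas you route through $m=n\Rightarrow\gcf(n,r)=1$ and Theorem \ref{gcdnr1}; since that theorem is itself derived from Theorem \ref{CycStab}, your argument is equally valid and rests on the same mechanism.
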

\begin{proof}
\begin{enumerate}
\item By Lemma \ref{PreimDiv4} $m|\ord_E(L)|n$ for all $[E]\in M^s_Y(u)$. Assume that $\max \{ \ord_{E}(L) \;|\;  [E]\in M^s_Y(u)\}=n$. Then $\ell=n$, and item 3 of Theorem \ref{CycStab} contradicts our assumption. Hence $\ord_{E}(L)<n$ for all $[E]\in M^s_Y(u)$, and, in particular, $m<n$.
\item This follows from the definition of the $M^s_Y(u)$-order of $L$.
\item Let $n$ be a prime power. By Lemma \ref{orderDiv} $\ell|n$, thus $\ell$ is a prime power as well, and therefore there is an $[E]\in M_Y^s(u)$ such that $\ell=\ord_{E}(L)$. Assume that $\ell=n$. Then, as in item 1, item 3 of Theorem \ref{CycStab} contradicts our assumption. Hence $\ell\neq n$. The second statement is Proposition \ref{pbFact} together with Corollary \ref{ppQuCov}.\qedhere
\end{enumerate}
\end{proof}

\noindent If $n$ is not a prime power, which means in our case that $n=6$, it is not ensured that $\Sigma$ is a proper subset.

\subsection{Lagrangian subvarieties in irreducible symplectic manifolds}\label{LIHS}

We keep all assumptions.
For $n=\nu=2$, in particular, for a K3 surface covering an Enriques surface, Theorem \ref{CC} immediately simplifies to

\begin{proposition}\label{CanDouSt}
Let $Y$ be an Enriques or bielliptic surface with torsion canonical bundle $K_Y$ of order $2$ on $Y$, $f\colon X\to Y$ the covering given by $K_Y$,
$u=(r,c,\chi)\in \Hev$, $M_Y(u)$ the moduli space of Gieseker or $(H,A)$-semistable sheaves $E$ with $u(E)=u$ and $M_Y^s(u)$ the open subscheme of $M_Y(u)$ of 
stable sheaves on $Y$.
Assume that the order of $\bc_1(K_Y)$ is 2 as well and that there is a coherent sheaf $E$ on $Y$ with $u(E)=u$ such that $f^*E$ is stable.
\begin{enumerate}
\item If the rank $r$  is odd, then one has the following:
\begin{enumerate}
\item $M^s_Y(u)$ is a nonempty and nonsingular,
\item $f^*(M^s_Y(u))=f^*(M_Y(u))\cap M^s_X(f^*u)$,
\item $f^*$ induces an isomorphism $M^s_Y(u)\to f^*(M^s_Y(u))$,
\item $\dim M^s_X(f^*u)=2 \dim f^*(M^s_Y(u))$, and
\item $f^*(M^s_Y(u))$ is a Lagrangian subvariety of $M^s_X(f^*u)$.
\end{enumerate}
\item If the rank $r$ is even and $\Sigma:=\{ E\in M^s_Y(u) \;|\; E\otimes K_Y\cong E \}$, then
\begin{enumerate}
\item $M^s_Y(u)\setminus\Sigma$ is a nonempty and nonsingular open subset of $M^s_Y(u)$,
\item $f^*(M^s_Y(u)\setminus\Sigma)=f^*(M_Y(u))\cap M^s_X(f^*u)$ and $f^*(\Sigma)\cap M^s_X(f^*u)=\emptyset$,
\item $f^*$ induces a $2:1$ covering $M^s_Y(u)\to f^*(M^s_Y(u))$ branched along $\Sigma$,
\item $\dim M^s_X(f^*u)=2 \dim f^*(M^s_Y(u)\setminus\Sigma)$, and
\item $f^*(M^s_Y(u)\setminus\Sigma)$ is a Lagrangian subvariety of $M^s_X(f^*u)$.
\end{enumerate}
\end{enumerate}
\end{proposition}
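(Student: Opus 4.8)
The plan is to derive Proposition \ref{CanDouSt} as the specialisation of Theorem \ref{CC} to the case $n=\nu=2$, splitting the argument according to the parity of the rank $r$. The hypothesis of Theorem \ref{CC} —the existence of a coherent sheaf $E$ with $u(E)=u$ and $f^*E$ stable— is exactly the assumption here, so items 1--5 of that theorem apply verbatim with $K_Y$ in the role of $L$ and $\Sigma=\{[E]\in M^s_Y(u)\mid\ord_E(K_Y)\neq n\}$. Since $\ord_E(K_Y)\mid n=2$ by Lemma \ref{orderDiv}, the condition $\ord_E(K_Y)\neq 2$ is equivalent to $\ord_E(K_Y)=1$, i.e.\ to $E\cong E\otimes K_Y$, which matches the $\Sigma$ appearing in case 2 of the proposition. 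The decisive simplification is that the final Lagrangian criterion of Theorem \ref{CC} —``$n=2$ or $\dim f^*(M^s_Y(u)\setminus\Sigma)=1$''— holds automatically because $n=2$; hence in both cases the image is Lagrangian with no dimension restriction, which gives parts 1(e) and 2(e) at once.

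Next I would compute the integer $m=\frac\nu{\gcf(\nu,r)}=\frac2{\gcf(2,r)}$ of Example \ref{Mexa}.3 and the covering degree $\frac{n\,\gcf(\nu,r)}\nu=\gcf(2,r)$ from Theorem \ref{CC}, and read off the two cases. For $r$ odd one has $\gcf(2,r)=1$, so $m=2=n$ and $\varphi$ is induced by $\otimes K_Y^{\otimes 2}$; since $K_Y$ has order $2$ we have $K_Y^{\otimes 2}\cong\cO_Y$, so $\varphi=\mathrm{id}$. Equivalently, Lemma \ref{PreimDiv4} forces $\ord_E(K_Y)=2$ for every $[E]\in M^s_Y(u)$, whence $\Sigma=\emptyset$. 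The covering degree is then $1$, so the branched covering of Theorem \ref{CC} is unbranched of degree one; combining $\varphi=\mathrm{id}$ (so that $\pi=\mathrm{id}$ in Proposition \ref{pbFact} and $f^*=\vartheta$ is injective) with the nonsingularity of the image from Proposition \ref{CCfEst} yields the isomorphism of part 1(c). Parts 1(a), 1(b), 1(d) are then items 1, 2 and 5 of Theorem \ref{CC} (with item 5 giving $2-n+n\dim=2\dim$ for $n=2$), while item 3 of that theorem is vacuous here because $\Sigma=\emptyset$.

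For $r$ even one has $\gcf(2,r)=2$, so $m=1$, $\varphi=\otimes K_Y$, and the covering degree is $2$. Here $\Sigma$ may be nonempty, and I would simply transcribe items 1--5 of Theorem \ref{CC}: part 2(a) is item 1, part 2(b) is items 2 and 3, part 2(c) is item 4 (a $2:1$ covering $M^s_Y(u)\to f^*(M^s_Y(u))$ branched along $\Sigma$), part 2(d) is item 5 specialised to $n=2$, and part 2(e) is again the automatic Lagrangian conclusion.

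I expect no serious obstacle, since the proposition is essentially a corollary of Theorem \ref{CC}; the only steps requiring genuine care are the two bookkeeping identifications. The first is matching the order-theoretic $\Sigma$ of Theorem \ref{CC} with the parity description used here, which I have reduced above to $\ord_E(K_Y)\in\{1,2\}$. The second, and the more delicate one, is justifying the word \emph{isomorphism} in part 1(c): a degree-$1$ branched covering with empty branch locus must be promoted from a bijection on closed points to an isomorphism of schemes. For this I would record explicitly that $m=n$ forces $\varphi=\mathrm{id}$, so $f^*=\vartheta$ is an injective morphism from the nonsingular $M^s_Y(u)$ onto its image, and then confirm that the unbranched degree-one covering structure supplied by Proposition \ref{Mdecomp} upgrades this to an isomorphism onto the (nonsingular) image.
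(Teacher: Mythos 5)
Your proposal is correct and follows exactly the paper's route: the paper states Proposition \ref{CanDouSt} as an immediate specialisation of Theorem \ref{CC} to $n=\nu=2$, with the same bookkeeping you carry out (covering degree $\gcf(2,r)$, $\Sigma=\emptyset$ for odd $r$ via Lemma \ref{PreimDiv4}, $\ord_E(K_Y)\neq 2$ equivalent to $E\cong E\otimes K_Y$ for even $r$, and the Lagrangian criterion automatic since $n=2$). Your closing remark on upgrading the degree-one unbranched covering to an isomorphism in part 1(c) is a point the paper leaves implicit in the word ``covering'' of Theorem \ref{CC} item 4, so flagging it is a welcome refinement rather than a deviation.
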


\noindent  If $f^*u$ is primitive and $f^*H$ (or $f^*A$, respectively) is $f^*u$-general, then $M_X(f^*u)=M^s_X(f^*u)$ and $\Sigma=\emptyset$ also for even rank.
Whenever $M_Y(u)$ is nonempty, it yields the Lagrangian subvariety $f^*(M_Y(u))$ in the symplectic manifold $M^s_X(f^*u)$. Note that the latter is an irreducible (holomorphically) symplectic manifold if $X$ is a K3 surface and, when considering $(H,A)$-stability, if a numerical assumption on the invariants holds, see \cite[Corollary 6.7]{Zow12}.

On the other hand, in the case $n=\nu=2$, Theorem \ref{CCsst} simplifies to

\begin{proposition}\label{CanDouSSSt}
Let $Y$ be an Enriques or bielliptic surface with torsion canonical bundle $K_Y$ of order $2$ on $Y$, $f\colon X\to Y$ the covering given by $K_Y$,
$u=(r,c,\chi)\in \Hev$, $M_Y(u)$ the moduli space of Gieseker or $(H,A)$-semistable sheaves $E$ with $u(E)=u$ and $M_Y^s(u)$ the open subscheme of $M_Y(u)$ of 
stable sheaves on $Y$.
Assume that the order of $\bc_1(K_Y)$ is 2 as well, that $M^s_Y(u)\neq\emptyset$ and that $f^*(M^s_Y(u))\cap M^s_X(f^*u)=\emptyset$.
Then the rank $r$ is even, for all $E\in M^s_Y(u)$ one has that $E\cong E\otimes K_Y$, and $f^*|_{M^s_Y(u)}$ is injective.
\end{proposition}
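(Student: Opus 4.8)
The plan is to derive every assertion from the general Theorem \ref{CCsst} by specialising to $n=\nu=2$ with $L=K_Y$, and then to dispatch the injectivity claim by hand. First I would fix the numerics: since the order of $\bc_1(K_Y)$ is assumed to be $2$ we have $\nu=2$, and we are in the situation of Example \ref{Mexa}.3 with $m=\frac{\nu}{\gcf(\nu,r)}=\frac{2}{\gcf(2,r)}$. Item 1 of Theorem \ref{CCsst} forces $m<n=2$, so necessarily $m=1$, which means $\gcf(2,r)=2$; this is exactly the statement that $r$ is even.

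Next I would pin down $\ell:=\ord_{M^s_Y(u)}(K_Y)$. Because $n=2$ is a prime power, item 3 of Theorem \ref{CCsst} gives $\ell\neq n=2$. On the other hand Lemma \ref{orderDiv} yields $\frac{n}{\gcf(n,r)}=1\mid\ell\mid n=2$, so $\ell\in\{1,2\}$, leaving only $\ell=1$. Feeding $\ell=1$ into item 2 of Theorem \ref{CCsst} then gives $E\cong E\otimes K_Y^{\ell}=E\otimes K_Y$ for every $[E]\in M^s_Y(u)$, which is the second assertion. (Equivalently, $\ell=\lcm\{\ord_E(K_Y)\}=1$ forces each $\ord_E(K_Y)=1$, i.e.\ $E\cong E\otimes K_Y$ by Lemma \ref{PreimDiv1}.)

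For the remaining injectivity on closed points I would proceed as follows. Given $[E],[E']\in M^s_Y(u)$ with $[f^*E]=[f^*E']$ in $M_X(f^*u)$, I first note that $f^*E$ and $f^*E'$ are polystable by Corollary \ref{PbPStabPStab}, so that equality of the corresponding moduli points (S\nobreakdash-equivalence) upgrades to an honest isomorphism $f^*E\cong f^*E'$. Lemma \ref{PreimStab} then supplies a $j$ with $E'\cong E\otimes K_Y^{\,j}$; since $K_Y^2\cong\cO_Y$ and $E\cong E\otimes K_Y$ by the previous paragraph, one has $E\otimes K_Y^{\,j}\cong E$ for every $j$, whence $E'\cong E$ and $[E]=[E']$. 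An alternative route is the factorisation of Proposition \ref{pbFact}: the automorphism $\varphi=\otimes K_Y$ acts as the identity on $M^s_Y(u)$, so $\pi$ is a bijection, and $\vartheta$ is injective on $M^s_Y(u)/\langle\varphi\rangle$ because we are in an Example \ref{Mexa} case; hence $f^*=\vartheta\circ\pi$ is injective.

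There is no serious obstacle here, as the heavy lifting already sits inside Theorem \ref{CCsst}; the one point that genuinely needs care is the injectivity step, where one must pass from equality of points in the moduli space (which only records S\nobreakdash-equivalence) to an actual isomorphism of sheaves. This is precisely where the polystability of the pullbacks (Corollary \ref{PbPStabPStab}) is indispensable, since Lemma \ref{PreimStab} is stated for genuinely isomorphic pullbacks rather than merely S\nobreakdash-equivalent ones. I would therefore present the polystability reduction explicitly before invoking Lemma \ref{PreimStab}.
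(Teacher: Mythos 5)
Your proof is correct and takes essentially the same route as the paper, which states this proposition precisely as the specialisation of Theorem \ref{CCsst} to $n=\nu=2$: its item 1 gives $m=1$ and hence $r$ even, items 2 and 3 combined with Lemma \ref{orderDiv} give $\ell=1$ and hence $E\cong E\otimes K_Y$, and injectivity is read off from the $\frac{\ell}{m}:1=1:1$ covering in item 3. Your hand-made injectivity argument (polystability via Corollary \ref{PbPStabPStab} to upgrade S-equivalence of the pullbacks to an isomorphism, then Lemma \ref{PreimStab}) simply unwinds the machinery of Proposition \ref{pbFact} that underlies that item, and your explicit treatment of the S-equivalence point is a detail the paper leaves implicit.
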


Similar results have been proven by Kim in \cite{Kim98a} for $\mu$-stable sheaves of positive rank on Enriques surfaces.\\

As a first question one might ask when the assumption $M^s_Y(u)\neq\emptyset$ is satisfied. Recall that $M^s_Y(u)\neq\emptyset$ is a necessary condition also for Proposition \ref{CanDouSt} due to Proposition \ref{PbStab}.
A general case of odd rank sheaves has been considered in \cite{Yos03}. \cite[Theorem  4.6]{Yos03} implies that
if $Y$ is an unnodal Enriques surface, i.e.\ there is no $-2$-curve, $u$ is primitive with $\chi(u,u)\ge -1$ and $H$ is $u$-general, then $M^s_{Y;H}(u)$ is nonempty and irreducible.
The assumption also implies that $M_{Y;H}(u)=M^s_{Y;H}(u)$, i.e.\ there are no strictly semistable sheaves.\\

If $X$ is a K3 surface, then $\Pic^0(X)=0$, and $M_X(f^*u)$ is an irreducible symplectic manifold if $f^*u$ is primitive and $f^*H$ is $f^*u$-general (or $f^*A$ is $f^*u$-general, respectively, and additionally the above-mentioned numerical condition holds).
If $X$ is an abelian surface, the situation is different: in order to produce higher dimensional irreducible symplectic manifolds one has to get rid of superfluous factors in the Bogomolov decomposition by taking a fibre of the Albanese map. Hence we will now fix the determinant of the considered sheaves and additionally reduce the moduli space to the kernel of a suitable summation map.

Let $Y$ be a bielliptic surface and $f\colon X\to Y$ the canonical covering. In particular, $X$ is an abelian surface and $f$ is cyclic of order $n=2,3,4$ or $6$.
Let still $H$ and $A$ be two ample divisors on $Y$, and we continue considering Gieseker stability and $(H,A)$-stability in the following.

We associate the element $$w(E):=(\rk E,\det(E),\chi(E))\in \Hpev:=\IN_0\oplus\Pic(Y)\oplus\IZ$$ of sheaf invariants to the sheaf $E$.
We fix an element $w:=(r,d,\chi)\in \Hpev$ and define $$P(w):=r \frac{H^2}2n^2+\bc_1(d).Hn+\chi\,.$$
If $E$ satisfies $w(E)=w$, then its Hilbert polynomial is $P(w)$.

Let $M_Y(w)$ be the moduli space of Gieseker or $(H,A)$-semistable sheaves $E$ with $w(E)=w$. It is projective and a subscheme of $M_Y(P(w))$, hence the results of Section \ref{arbdim} apply again. Moreover, by setting $m:=\frac n{\gcf(n,r)}$ we are in the situation of Example \ref{Mexa}.2 with $M_Y=M_Y(w)$.
In case we need to distinguish between Gieseker stability and $(H,A)$-stability, we will use $M_{Y;H}(w)$ and $M_{Y;H,A}(w)$, respectively.
Recall again that $M_{Y;H,H}(w)=M_{Y;H}(w)$.
We denote the open subscheme of $M_Y(w)$ of stable sheaves on $Y$ by $M_Y^s(w)$. 
$M^s_X(f^*w)$ is nonsingular, each connected component has dimension $-\chi(f^*w,f^*w)$, where, of course,  $\chi((r,d,a),(r,d,a)):=\chi((r,\bc_1(d),a),(r,\bc_1(d),a))$ for $(r,d,a)\in\Hpev$.
The morphism $f^*\colon M_Y(w)\to M_X(\deg f\cdot P(w))$ induced by the pullback by $f$ which is described in Theorem \ref{MHApb} has image inside $M_X(f^*w)$.
Thus we can analogously replace $M_X(\deg f\cdot P(w))$ by $M_X(f^*w)$ in the results of Section \ref{arbdim} and consider $f^*$ as a morphism $M_Y(w)\to M_X(f^*w)$.

\begin{proposition}\label{CCfEst0}
Let $E$ be a coherent sheaf on $Y$ with $w(E)=w$ such that $f^*E$ is stable.
Then $E$ is stable, $E\not\cong E\otimes K_Y^j$ for $1\le j<n$ and $\ext^2(E,E)_0=0$.
$M_Y(w)$ is nonsingular in $[E]$ of expected dimension
\begin{eqnarray*}
\dim_{E} M_Y(w)&=&\ext^1(E,E)_0=\ext^1(E,E)-1\,,\quad\textrm{and}\\
\dim_{f^*E} M_X(f^*w)&=&\ext^1(f^*E,f^*E)_0=\ext^1(f^*E,f^*E)-2=n \dim_{E} M_Y(w)\,.
\end{eqnarray*}
\end{proposition}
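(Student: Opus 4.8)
The plan is to run the proof in close parallel with that of Proposition \ref{CCfEst}, replacing the full $\Ext$ groups by their traceless parts throughout. The first three assertions transfer almost verbatim: since $f^*E$ is stable, Proposition \ref{PbStab} forces $E$ to be stable, in particular simple, and $f^*E$ is simple as well; Lemma \ref{CycShDec4} applied with $L=K_Y$ then yields $E\not\cong E\otimes K_Y^j$ for $1\le j<n$, and Lemma \ref{CycCanDim} gives $\ext^2(E,E)=0$. Because $\Ext^2(E,E)_0$ is a subspace of $\Ext^2(E,E)$, the vanishing $\ext^2(E,E)_0=0$ is then immediate.

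Next I would record the trace decomposition. Since $\rk E>0$ and we work over $\IC$, the composition $\bH^i(\cO_Y)\to\Ext^i(E,E)\to\bH^i(\cO_Y)$ of the map $\alpha\mapsto\alpha\cdot\mathrm{id}_E$ with the trace is multiplication by $\rk E$, hence an isomorphism; this splits off the traceless summand and gives $\ext^i(E,E)_0=\ext^i(E,E)-h^i(\cO_Y)$, and likewise on $X$. For a bielliptic surface one has $h^1(\cO_Y)=1$ and $h^2(\cO_Y)=0$, while for the abelian surface $X$ one has $h^1(\cO_X)=2$ and $h^2(\cO_X)=1$. This yields the stated identities $\ext^1(E,E)_0=\ext^1(E,E)-1$ and $\ext^1(f^*E,f^*E)_0=\ext^1(f^*E,f^*E)-2$.

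For smoothness I would invoke that the tangent space to the fixed-determinant moduli space $M_Y(w)$ at a stable point $[E]$ is $\Ext^1(E,E)_0$ with obstructions in $\Ext^2(E,E)_0$. As the latter vanishes, $M_Y(w)$ is smooth at $[E]$ of dimension $\ext^1(E,E)_0=\ext^1(E,E)-1$, which equals the expected dimension. The same runs on $X$: $f^*E$ is stable, hence simple, so Serre duality on the abelian surface $X$ (with $K_X$ trivial) gives $\ext^2(f^*E,f^*E)=\hom(f^*E,f^*E)=1=h^2(\cO_X)$ and therefore $\ext^2(f^*E,f^*E)_0=0$, whence $M_X(f^*w)$ is smooth at $[f^*E]$ of dimension $\ext^1(f^*E,f^*E)_0$. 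The relation between the two dimensions then follows by inserting the simplicity formula $\ext^1(f^*E,f^*E)=2-n+n\,\ext^1(E,E)$ of Lemma \ref{CycCanDim} into the traceless count:
$$\ext^1(f^*E,f^*E)_0=\ext^1(f^*E,f^*E)-2=n\bigl(\ext^1(E,E)-1\bigr)=n\,\ext^1(E,E)_0=n\dim_E M_Y(w)\,.$$

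I do not expect a serious obstacle here, since the deep input (stability, the non-isomorphisms, and $\ext^2(E,E)=0$) is already supplied by the earlier lemmas. The two points that need care are the identification of the fixed-determinant deformation theory with the traceless $\Ext$ groups (so that smoothness follows from $\ext^2(\cdot,\cdot)_0=0$) and the correct bookkeeping of the Hodge numbers $h^i(\cO_Y)$ and $h^i(\cO_X)$; it is precisely these that produce the shift $-1$ on $Y$, the shift $-2$ on $X$, and their compatibility with the factor $n$.
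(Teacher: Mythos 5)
Your proof is correct and follows essentially the same route as the paper: the first assertions and the key relation $\ext^1(f^*E,f^*E)=2-n+n\,\ext^1(E,E)$ are drawn from Proposition \ref{CCfEst} (equivalently from Proposition \ref{PbStab}, Lemma \ref{CycShDec4} and Lemma \ref{CycCanDim}), and smoothness of the fixed-determinant moduli spaces at stable points comes from the traceless-$\Ext$ deformation theory of \cite[Theorem 4.5.4]{HL10}. The only difference is bookkeeping: the paper obtains the traceless dimensions from the formula $\ext^1(\cdot,\cdot)_0=\chi(\cO)-\chi(\cdot,\cdot)$ following \cite[Corollary 4.5.5]{HL10}, whereas you split off $\bH^i(\cO)$ via the trace map and insert the Hodge numbers of the bielliptic and abelian surfaces --- an equivalent computation yielding the same shifts $-1$ and $-2$.
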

\begin{proof}
By Proposition \ref{CCfEst}, $E$ is stable, $E\not\cong E\otimes K_Y^j$ for $1\le j<n$, $\ext^2(E,E)=0$ and 
\begin{eqnarray}
\ext^1(f^*E,f^*E)&=&2-n+n \ext^1(E,E)\,. \label{CCfEst0_1}
\end{eqnarray}
Hence, in particular, $\ext^2(E,E)_0=0$.
By \cite[Theorem 4.5.4]{HL10} and its immediate generalisation to $(H,A)$-stability, $M_Y(w)$ is nonsingular in $[E]$ and of expected dimension $\ext^1(E,E)_0$,
and $M_X(f^*w)$ is nonsingular in $[f^*E]$ and of expected dimension $\ext^1(f^*E,f^*E)_0$.
By the formula after \cite[Corollary 4.5.5]{HL10} one has that
\begin{eqnarray*}
\ext^1(E,E)_0&=&\chi(O_Y)-\chi(E,E)=0-1+\ext^1(E,E)-0 \quad\mathrm{and}\quad\\
\ext^1(f^*E,f^*E)_0&=&\chi(O_X)-\chi(f^*E,f^*E)=0-1+\ext^1(f^*E,f^*E)-1\\
&\stackrel{(\ref{CCfEst0_1})}=&n (\ext^1(E,E)-1)= n\ext^1(E,E)_0\,. 
\end{eqnarray*} \qede
\end{proof}

\noindent We can now adjust Theorems \ref{CC} and \ref{CCsst} to the modified situation:

\begin{theorem}\label{CC0}
Assume that there is a coherent sheaf $E$ on $Y$ with $w(E)=w$ such that $f^*E$ is stable and let $\Sigma:=\{ [E]\in M^s_Y(w) \;|\; \ord_E(K_Y)\neq n\}$.
Then 
\begin{enumerate}
\item $M^s_Y(w)\setminus\Sigma$ is a nonempty and nonsingular open subset of $M^s_Y(w)$,
\item $f^*(M^s_Y(w)\setminus\Sigma)=f^*(M_Y(w))\cap M^s_X(f^*w)$,
\item $f^*(\Sigma)\cap M^s_X(f^*w)=\emptyset$,
\item $f^*$ induces a $\gcf(n,r):1$ covering $M^s_Y(w)\to f^*(M^s_Y(w))$ branched along $\Sigma$, and
\item $\dim M^s_X(f^*w)=n \dim f^*(M^s_Y(u)\setminus\Sigma)$.
\end{enumerate}
\end{theorem}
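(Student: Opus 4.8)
The plan is to run the same argument as in the proof of Theorem \ref{CC}, now in the setting of Example \ref{Mexa}.2 with $M_Y=M_Y(w)$ and $m=\frac n{\gcf(n,r)}$, and to substitute Proposition \ref{CCfEst0} for Proposition \ref{CCfEst} in the dimension count. First I would observe that any coherent sheaf $E$ on $Y$ with $w(E)=w$ and $f^*E$ stable is itself stable by Proposition \ref{PbStab}, so that $M^s_Y(w)\neq\emptyset$ and condition 2 of Theorem \ref{CycStab} is satisfied (applied with $M^s_Y(w)\subseteq M^s_Y(P(w))$ and $L=K_Y$). Hence all four equivalent conditions of that theorem hold. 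In particular the $M^s_Y(w)$-order of $K_Y$ equals $n$, so that $\Sigma=\{[E]\in M^s_Y(w)\;|\;\ord_E(K_Y)\neq n\}$ coincides with the branch locus appearing there; and, after replacing $M_X(nP)$ by $M_X(f^*w)$ as explained above, the identities $f^*(M^s_Y(w)\setminus\Sigma)=f^*(M_Y(w))\cap M^s_X(f^*w)$ and $M^s_Y(w)\setminus\Sigma=(f^*)^{-1}(M^s_X(f^*w))$ yield items 1--3, with $f^*(\Sigma)\cap M^s_X(f^*w)=\emptyset$ immediate. Since $f^*E$ is stable for $[E]\notin\Sigma$, Proposition \ref{CCfEst0} shows $M^s_Y(w)\setminus\Sigma$ to be nonsingular, completing item 1.

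For item 4 I would invoke the factorisation of Proposition \ref{pbFact}: in the case of Example \ref{Mexa}.2 the induced map $\vartheta$ is injective on $M^s_Y(w)/\langle\varphi\rangle$, while Proposition \ref{Mdecomp} item 7 with $j=n$ (legitimate because $\ord_{M^s_Y(w)}(K_Y)=n$) exhibits $\pi$ as a $\frac nm:1$ covering branched along $\Sigma$. As $\frac nm=\gcf(n,r)$, composing with the injective $\vartheta$ gives the asserted $\gcf(n,r):1$ covering $M^s_Y(w)\to f^*(M^s_Y(w))$ branched along $\Sigma$.

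For item 5, Proposition \ref{CCfEst0} supplies, for $[E]\notin\Sigma$, the relation $\dim_{f^*E}M_X(f^*w)=n\,\dim_E M_Y(w)$; since the covering of item 4 is finite one has $\dim(M^s_Y(w)\setminus\Sigma)=\dim f^*(M^s_Y(w)\setminus\Sigma)$, and the formula follows.

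The only step deserving genuine care, and the sole substantive difference from Theorem \ref{CC}, is the clean multiplicative relation of item 5 replacing the affine relation $2-n+n\,\dim(\cdots)$ there. This is exactly what Proposition \ref{CCfEst0} delivers: fixing the determinant passes to the trace-free groups $\ext^i(-,-)_0$, and the additive term $2-n$ of Proposition \ref{CCfEst} is absorbed by the reductions $\ext^1(E,E)_0=\ext^1(E,E)-1$ and $\ext^1(f^*E,f^*E)_0=\ext^1(f^*E,f^*E)-2$, which in turn rely on $\chi(\cO_X)=\chi(\cO_Y)=0$ for the abelian surface $X$ over the bielliptic $Y$. I would verify this bookkeeping once more, but no idea beyond Proposition \ref{CCfEst0} is required, and, unlike Theorem \ref{CC}, no symplectic (Lagrangian) conclusion needs to be drawn here.
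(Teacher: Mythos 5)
Your proposal is correct and is exactly the paper's argument: the paper's own proof is the one-line remark that the proof goes analogously to Theorem \ref{CC} with Proposition \ref{CCfEst0} in place of Proposition \ref{CCfEst}, and your write-up is a faithful expansion of that (Example \ref{Mexa}.2 with $m=\frac{n}{\gcf(n,r)}$, Theorem \ref{CycStab} for items 1--3, Propositions \ref{pbFact} and \ref{Mdecomp}.7 with $j=n$ for the $\gcf(n,r):1$ covering, and the trace-free dimension count for item 5).
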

\begin{proof}
The proof goes analogous to Theorem \ref{CC} using Proposition \ref{CCfEst0} instead of Proposition \ref{CCfEst}.
\end{proof}

\noindent Analogously to Theorem \ref{CCsst} one proves 

\begin{theorem} 
Assume that $M^s_Y(w)\neq\emptyset$ and $f^*(M^s_Y(w))\cap M^s_X(f^*w)=\emptyset$. Moreover, let $\ell$ be the $M^s_Y(w)$-order of $L$, and $m:=\frac n{\gcf(n,r)}$ as in Example \ref{Mexa}.2. Then
\begin{enumerate}
\item $m<n$ and $\gcf(n,r)\neq 1$,
\item For all $E\in M^s_Y(w)$ one has that $E\cong E\otimes K_Y^{\ell}$, and
\item if $n$ is a prime power then $\ell\neq n$ and $f^*$ induces an $\frac {\ell}{m}:1$ covering $M^s_Y(w)\to f^*(M^s_Y(w))$ branched along the closed subscheme $\Sigma:= \{ [E] \in M^s_Y(w) \;|\; \ord_E(L)\neq\ell\}$.
\end{enumerate}
\end{theorem}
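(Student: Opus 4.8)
The plan is to transcribe the proof of Theorem~\ref{CCsst} almost verbatim, replacing the fixed-first-Chern-class data of Example~\ref{Mexa}.3 (with $m=\frac{\nu}{\gcf(\nu,r)}$) by the fixed-determinant data of Example~\ref{Mexa}.2 (with $m=\frac{n}{\gcf(n,r)}$ and $L=K_Y$). Before running the argument I would record that we are genuinely in the situation of Example~\ref{Mexa}.2, so that Theorem~\ref{CycStab}, Proposition~\ref{pbFact} and Corollary~\ref{ppQuCov} all apply with this $m$ and with $\ell:=\ord_{M^s_Y(w)}(L)$; by Lemma~\ref{PreimDiv2} tensoring by $K_Y^{\otimes m}$ fixes determinants, so $\varphi$ is well defined on $M^s_Y(w)$. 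The only assertion not already present in Theorem~\ref{CCsst} is $\gcf(n,r)\neq 1$, and under the present choice $m=\frac{n}{\gcf(n,r)}$ this is literally equivalent to $m<n$.

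For item~1 I would argue exactly as for Theorem~\ref{CCsst}. By Lemma~\ref{PreimDiv4} every $[E]\in M^s_Y(w)$ satisfies $m\mid\ord_E(L)\mid n$. If one had $\max\{\ord_E(L):[E]\in M^s_Y(w)\}=n$, then, since $\ell\mid n$ by Lemma~\ref{orderDiv} while $\ell$ is the least common multiple of the $\ord_E(L)$ and hence at least their maximum, we would get $\ell=n$; thus condition~1 of Theorem~\ref{CycStab} would hold, and condition~3 of that theorem would force $f^*(M^s_Y(w))\cap M^s_X(f^*w)\neq\emptyset$, against the hypothesis. Hence $\ord_E(L)<n$ for every $E$, so $m\le\ord_E(L)<n$, i.e.\ $m<n$. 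As $m=\frac{n}{\gcf(n,r)}$, this is exactly $\gcf(n,r)\neq 1$. One may instead deduce $\gcf(n,r)\neq 1$ in one line from Theorem~\ref{gcdnr1}, since $\gcf(n,r)=1$ would give $f^*(M^s_Y(w))\subseteq M^s_X(f^*w)$, contradicting emptiness of the intersection for nonempty $M^s_Y(w)$.

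Item~2 is immediate from Definition~\ref{Morder}: by definition $\ord_E(L)\mid\ell$ for all $[E]\in M^s_Y(w)$, so $E\cong E\otimes K_Y^{\ell}$ by Lemma~\ref{PreimDiv1}. For item~3 I would assume $n$ a prime power; then $\ell\mid n$ by Lemma~\ref{orderDiv} is again a prime power, the orders $\ord_E(L)$ are totally ordered by divisibility, and so the least common multiple $\ell$ is attained by some $[E_0]\in M^s_Y(w)$. If $\ell=n$ then $\max\{\ord_E(L)\}=n$ and the argument of item~1 contradicts the hypothesis, so $\ell\neq n$. The covering statement then combines Proposition~\ref{pbFact}, which factors $f^*$ as $\vartheta\circ\pi$ with $\vartheta$ injective on the stable locus, and Corollary~\ref{ppQuCov}, which identifies $\pi\colon M^s_Y(w)\to M^s_Y(w)/\langle\varphi\rangle$ as an $\frac{\ell}{m}:1$ covering branched along $\Sigma$.

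There is no deep obstacle here; the argument is a direct translation of an existing proof. The single point I would verify rather than assume is the applicability of the prime-power machinery of Corollary~\ref{ppQuCov} in the fixed-determinant setting, namely that the quotient $M^s_Y(w)/\langle\varphi\rangle$ is formed by the very same $\varphi$ that factors $f^*$ in Proposition~\ref{pbFact}; this is exactly what the Example~\ref{Mexa}.2 set-up recorded at the outset guarantees.
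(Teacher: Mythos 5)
Your proposal is correct and follows exactly the route the paper intends: the paper proves this theorem ``analogously to Theorem~\ref{CCsst}'', and your transcription of that proof with the Example~\ref{Mexa}.2 data (noting that $\gcf(n,r)\neq 1$ is literally equivalent to $m<n$ when $m=\frac{n}{\gcf(n,r)}$, and invoking Theorem~\ref{CycStab}, Lemma~\ref{PreimDiv4}, Lemma~\ref{orderDiv}, Proposition~\ref{pbFact} and Corollary~\ref{ppQuCov} in the same way) is precisely that argument. Your extra care in checking that $\varphi$ is well defined in the fixed-determinant setting via Lemma~\ref{PreimDiv2} is a point the paper leaves implicit, but it is the same proof.
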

\noindent Recall that if $n$ is not a prime power, which means in our case that $n=6$, it is not ensured that $\Sigma$ is a proper subset.

We want to reduce further, which needs some preparation.
A short introduction to bielliptic surfaces can be found e.g.\ in \cite[V.5]{BHPV}. We need the following:
Every bielliptic surface $Y$ admits a finite \'etale covering $B\times C\to Y$ factorising via $X$, where $B$ and $C$ are elliptic curves.
$Y\cong (B\times C)/G$, where $G\subset C$ is a finite subgroup acting on $B$ such that $B/G\cong \IP^1$.
One has that $G\cong \IZ/(\IZ/n)\times \IZ/(\IZ/m)$ with $n$ still the order of $K_Y$ and the possibilities $m=1$ for any $n$, $m=2$ if $n=2$ or $4$, and $m=3$ only for $n=3$.
The generator of the group $\IZ/(\IZ/n)$ acts on $B$ by multiplication with $e^{2\pi i/n}$, and the generator of the group $\IZ/(\IZ/m)$ (in the case $m\neq 1$) by translation by some $a\in B$ with certain properties, see e.g.\ \cite[V.5]{BHPV}.
The covering abelian surface is given by $X\cong (B\times C)\Big/\IZ/(\IZ/m)$, and the group structure on $B\times C$ descends to the group structure of the abelian surface $X$.
This group structure induces a summation map $\sum \colon \CH_0(X)\to X$, where $\CH_0(X)$ is the Chow group of $X$. 
The second Chern class associates an element $\bc_2^{CH}(E)\in \CH_0(X)$ to any coherent sheaf $E$ on $X$.

Recall that one considers the kernel $K_X(f^*w)$ of the morphism $$A_{f^*w} \colon M_X(f^*w)\to X, [E]\mapsto \sum \bc_2^{CH}(E)$$ in order to get rid of less interesting factors in the Beauville-Bogomolov decomposition of $M_X(f^*w)$.
In particular, $K_{X;f^*H}(f^*w)$ is an irreducible symplectic manifold if $f^*w$ is primitive, $f^*H$ is $f^*w$-general and $-\chi(f^*w,f^*w)\ge 6$ \cite[Theorem 0.2]{Yos01}.
As $$f^*(M_Y(w))\cap K_X(f^*w) = f^*( (f^*)^{-1}(K_X(f^*w)))\,,$$ we are interested in $K_Y(w):=(f^*)^{-1}(K_X(f^*w))$ and the morphism $K_Y(w)\to K_X(f^*w)$.

For a sheaf class $[E]\in M_Y(w)$ one has that $\bc_2^{CH}(f^*E) = f^*\bc_2^{CH}(E)$.
Let us write $\bc_2^{CH}(E)=\sum_ia_i[b_i,c_i]_Y$ with $a_i\in \IZ$, $b_i\in B$ and $c_i\in C$, where $[\bullet]_Y$ denotes the image of $\bullet\in B\times C$ under the quotient morphism $B\times C\to Y$. This enables us to calculate
\begin{eqnarray*}
\sum \bc_2^{CH}(f^*E) &=& \sum f^*\bc_2^{CH}(E)\\
&=& \sum \sum_ia_i f^*[b_i,c_i]_Y\\
&=&\sum_ia_i \sum_{k=0}^{n-1} [\rho^k b_i,c_i+kg]_X\\
&=&\sum_ia_i [0,nc_i+\frac{n(n-1)}2g]_X\,,
\end{eqnarray*}
where $[\bullet]_X$ denotes the image of $\bullet\in B\times C$ under the quotient morphism $B\times C\to X$.
Therefore the image of $A_{f^*w}\circ f^*$ is at most one-dimensional, and
$$\codim_{M_Y(w)} K_Y(w) = \codim_{f^*(M_Y(w))} f^*(K_Y(w)) \le 1\,.$$
As the pullback of the symplectic structure to the smooth locus of $M_Y(w)$ vanishes by Proposition \ref{pbsymvan}, the corresponding restrictions to $K_Y(w)$ and to $f^*(K_Y(w))$ vanish as well.
We are interested in Lagrangian subvarieties of higher dimensional irreducible symplectic manifolds, so we assume now that $n=2$, $-\chi(f^*w,f^*w)\ge 6$, $f^*w$ is primitive and $f^*H$ is $f^*w$-general.
The codimension of $K_{X;f^*H}(f^*w)$ in $M_{X;f^*H}(f^*w)$ is 2, hence intersecting $f^*(M_{Y;H}(w))$ with $K_{X;f^*H}(f^*w)$ has to reduce the dimension at least by 1, i.e.\
$$\codim_{M_{Y;H}(w)} K_{Y;H}(w) = \codim_{f^*(M_{Y;H}(w))} f^*(K_{Y;H}(w)) \ge 1\,.$$
Thus we have proven the following:

\begin{proposition}\label{CanDouStK}
Let $n=2$, i.e.\ the canonical covering of the bielliptic surface $Y$ by the abelian surface $X$ has degree 2.
If $f^*w$ is primitive, $f^*H$ is $f^*w$-general and $-\chi(f^*w,f^*w)\ge 6$, then the image of the morphism $K_{Y;H}(w)\stackrel{f^*}\to K_{X;f^*H}(f^*w)$ is a Lagrangian subvariety.
\end{proposition}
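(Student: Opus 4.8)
The plan is to verify the two defining properties of a Lagrangian subvariety for $f^*(K_{Y;H}(w))$ inside $K_{X;f^*H}(f^*w)$: isotropy for the symplectic form, and exact half-dimensionality. The isotropy is the easy half. Under the hypotheses $f^*w$ primitive and $f^*H$ being $f^*w$-general one has $M_X(f^*w)=M^s_X(f^*w)$ and $M_Y(w)=M^s_Y(w)$, so all sheaves in sight are stable and $\Sigma=\emptyset$; by Proposition \ref{pbsymvan} the pullback of Mukai's symplectic form to the smooth locus of $M_{Y;H}(w)$ vanishes, hence that form restricts to zero on $f^*(M_{Y;H}(w))$. Since (by Yoshioka's construction) the symplectic form on the Albanese fibre $K_{X;f^*H}(f^*w)$ is the restriction of Mukai's form on $M_{X;f^*H}(f^*w)$, restricting further to the subvariety $f^*(K_{Y;H}(w))\subseteq f^*(M_{Y;H}(w))$ keeps it zero. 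Thus $f^*(K_{Y;H}(w))$ is isotropic.

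The substance is the dimension count, which I would obtain by pinning down $\codim_{M_Y(w)}K_Y(w)$ from both sides. For the upper bound I would use $\bc_2^{CH}(f^*E)=f^*\bc_2^{CH}(E)$ together with the explicit computation $\sum\bc_2^{CH}(f^*E)=\sum_i a_i[0,nc_i+\tfrac{n(n-1)}2 g]_X$: for $n=2$ every summand has vanishing $B$-coordinate, so the composite summation map $A_{f^*w}\circ f^*\colon M_Y(w)\to X$ has image contained in the one-dimensional subtorus $[\{0\}\times C]_X$ of $X$. Hence a general fibre, and in particular $K_Y(w)=(f^*)^{-1}(K_X(f^*w))$, has codimension at most $1$ in $M_Y(w)$. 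For the lower bound I would use that $A_{f^*w}$ is surjective onto the abelian surface $X$, so $K_X(f^*w)$ is a proper closed subvariety of codimension $2$; since $f^*(M_{Y;H}(w))$ is irreducible and is not contained in $K_X(f^*w)$ (equivalently, $A_{f^*w}\circ f^*$ is nonconstant, which reflects the nontrivial elliptic Albanese of the bielliptic surface $Y$), the intersection $f^*(K_{Y;H}(w))=f^*(M_{Y;H}(w))\cap K_X(f^*w)$ is a proper closed subset and the dimension drops by at least $1$. Combining the two bounds gives codimension exactly $1$.

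It then remains to convert this into half-dimensionality. Because $n=2$, $f^*w$ is primitive and $f^*H$ is $f^*w$-general, Theorem \ref{CC0} (via Proposition \ref{CCfEst0}) yields $\dim M_X(f^*w)=2\dim f^*(M_Y(w))$, so that $f^*(M_{Y;H}(w))$ is Lagrangian in $M_X(f^*w)$, and $f^*$ is finite onto its image so the codimension relation above transports to the images. Using $\codim_{M_X}K_X(f^*w)=2$ and $-\chi(f^*w,f^*w)\ge 6$, which makes $K_{X;f^*H}(f^*w)$ a positive-dimensional irreducible symplectic manifold by \cite[Theorem 0.2]{Yos01}, I then compute
\[
\dim f^*(K_{Y;H}(w))=\dim f^*(M_{Y;H}(w))-1=\tfrac12\dim M_X(f^*w)-1=\tfrac12\bigl(\dim M_X(f^*w)-2\bigr)=\tfrac12\dim K_{X;f^*H}(f^*w).
\]
Together with the isotropy from the first paragraph this shows the image is Lagrangian. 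The main obstacle I anticipate is precisely the lower bound on the codimension: one must rule out that $f^*(M_{Y;H}(w))$ is swallowed by the Albanese fibre $K_X(f^*w)$, i.e.\ that $A_{f^*w}\circ f^*$ degenerates to a point, and this forces one to exploit the geometry of the bielliptic quotient $B\times C\to Y$ rather than only the cohomological invariants fixed by $w$.
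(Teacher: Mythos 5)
Your first paragraph (isotropy via Proposition \ref{pbsymvan} and the fact that the form on $K_{X;f^*H}(f^*w)$ is the restriction of Mukai's form) and your upper bound $\codim_{M_{Y;H}(w)}K_{Y;H}(w)\le 1$ via the Chow-group computation are exactly the paper's steps. The genuine gap is your lower bound on the codimension: you deduce it from the assertions that $f^*(M_{Y;H}(w))$ is irreducible and is not contained in $K_{X;f^*H}(f^*w)$, i.e.\ that $A_{f^*w}\circ f^*$ is nonconstant, and you justify the latter only by the heuristic that it ``reflects the nontrivial elliptic Albanese of $Y$''. Neither assertion is established: irreducibility of $M_{Y;H}(w)$ for a bielliptic surface is not available in this generality, and nonconstancy of $A_{f^*w}\circ f^*$ on a fixed-determinant moduli space would require an actual argument about how $\sum\bc_2^{CH}$ varies in the moduli space, which nothing in your setup provides. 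You correctly flag this as the main obstacle, but you leave it open, so the proof is incomplete as written.

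The paper closes this step with no such geometric input, and the mechanism is worth internalising: the lower bound is forced by the isotropy you already proved, combined with Theorem \ref{CC0}. Since the symplectic form of $K_{X;f^*H}(f^*w)$ is the restriction of the form on $M_{X;f^*H}(f^*w)$, which vanishes on (the smooth locus of) $f^*(M_{Y;H}(w))$, the subvariety $f^*(K_{Y;H}(w))$ is isotropic in $K_{X;f^*H}(f^*w)$, and an isotropic subvariety of a symplectic manifold has at most half its dimension. Hence
\[
\dim f^*(K_{Y;H}(w))\;\le\;\tfrac12\dim K_{X;f^*H}(f^*w)\;=\;\tfrac12\dim M_{X;f^*H}(f^*w)-1\;=\;\dim f^*(M_{Y;H}(w))-1\,,
\]
where the last equality is item 5 of Theorem \ref{CC0} with $n=2$ (recall that $\Sigma=\emptyset$ and all moduli spaces coincide with their stable loci under your hypotheses). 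This is precisely the statement that intersecting with the codimension-two symplectic submanifold $K_{X;f^*H}(f^*w)$ drops the dimension by at least one; in particular the ``swallowing'' scenario $f^*(M_{Y;H}(w))\subseteq K_{X;f^*H}(f^*w)$ that you worried about is impossible for this purely formal reason. Combined with your (correct) upper bound this gives $\dim f^*(K_{Y;H}(w))=\tfrac12\dim K_{X;f^*H}(f^*w)$, and the proposition follows without any appeal to irreducibility or to the finer geometry of the quotient $B\times C\to Y$.
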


\noindent This result should generalise to $(H,A)$-stability:

\begin{conjecture}
Let $n=2$. If $f^*w$ is primitive, $f^*A$ is $f^*w$-general and $-\chi(f^*w,f^*w)\ge 6$, then $K_{X;f^*H,f^*A}(f^*w)$ is an irreducible symplectic manifold and the image of the morphism $K_{Y;H,A}(w)\stackrel{f^*}\to K_{X;f^*H,f^*A}(f^*w)$ is a Lagrangian subvariety.
\end{conjecture}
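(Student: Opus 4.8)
\emph{Proof proposal.} The plan is to split the statement into its two assertions and treat them by rather different means. The claim that the image $f^*(K_{Y;H,A}(w))$ is a Lagrangian subvariety of $K_{X;f^*H,f^*A}(f^*w)$ should follow by repeating almost verbatim the argument used for Proposition \ref{CanDouStK}, since every ingredient of that proof is already available for $(H,A)$-stability. The claim that $K_{X;f^*H,f^*A}(f^*w)$ is an irreducible symplectic manifold is the genuinely new ingredient and will be the main obstacle, requiring a generalisation of Yoshioka's \cite[Theorem 0.2]{Yos01} from Gieseker to $(H,A)$-stability at the level of the Albanese kernel.

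For the Lagrangian part I would first observe that the computation of $\sum\bc_2^{CH}(f^*E)$ carried out before Proposition \ref{CanDouStK} is purely numerical and refers to no stability notion, so it applies unchanged here; hence the image of $A_{f^*w}\circ f^*$ is again at most one-dimensional and $\codim_{M_{Y;H,A}(w)} K_{Y;H,A}(w)=\codim_{f^*(M_{Y;H,A}(w))} f^*(K_{Y;H,A}(w))\le 1$. Because $f^*A$ is $f^*w$-general and $f^*w$ is primitive, there are no strictly semistable sheaves, so $M_{X;f^*H,f^*A}(f^*w)=M^s_{X;f^*H,f^*A}(f^*w)$; thus for every $[E]\in M_{Y;H,A}(w)$ the pullback $f^*E$ is stable, the hypothesis of Theorem \ref{CC0} is met, $\Sigma=\emptyset$, and the whole image lies in the smooth symplectic locus. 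Theorem \ref{CC0} (whose inputs, Proposition \ref{CCfEst0} and the vanishing Proposition \ref{pbsymvan}, are both stated for $(H,A)$-stability) then gives $\dim M^s_{X;f^*H,f^*A}(f^*w)=2\dim f^*(M_{Y;H,A}(w))$ for $n=2$. Since the kernel has codimension $2$ in the full moduli space, intersecting $f^*(M_{Y;H,A}(w))$ with it must drop the dimension by at least $1$, forcing the codimension above to equal $1$; combined with the vanishing of the restricted symplectic form this yields the Lagrangian property by the same dimension bookkeeping as in Proposition \ref{CanDouStK}.

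The hard part is to show that $K_{X;f^*H,f^*A}(f^*w)$ is an irreducible holomorphically symplectic manifold. For Gieseker stability this is exactly \cite[Theorem 0.2]{Yos01}, and for the \emph{full} moduli space $M_{X;f^*H,f^*A}(f^*w)$ the irreducible symplectic property under $(H,A)$-stability is already recorded in \cite[Corollary 6.7]{Zow12}; what is missing is the statement for the Albanese kernel. I would bridge this gap using the overlap of $(H,A)$-stability and twisted stability on a surface \cite[Corollary 6.2.6]{Zow10}: choosing $V$ so that $(f^*H,f^*A)$-stability on the abelian surface $X$ coincides with $f^*V$-twisted $f^*H$-stability, one identifies $K_{X;f^*H,f^*A}(f^*w)$ with the corresponding kernel of the summation map inside a twisted moduli space, to which the twisted analogue of \cite[Theorem 0.2]{Yos01} should apply under the hypotheses $f^*w$ primitive, $f^*w$-general polarisation and $-\chi(f^*w,f^*w)\ge 6$. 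The delicate point --- and the reason this is phrased as a conjecture rather than a theorem --- is that one must check that Yoshioka's deformation and Fourier--Mukai arguments, together with the primitivity and genericity hypotheses, transport faithfully through this identification; establishing this twisted version of \cite[Theorem 0.2]{Yos01} in the required generality is the essential remaining step.
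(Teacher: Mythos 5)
The statement you were asked to prove is stated in the paper as a \emph{Conjecture}: the author offers no proof, only the remark that Proposition \ref{CanDouStK} ``should generalise'' to $(H,A)$-stability. So there is no paper proof to compare against, and the right benchmark is whether your proposal actually closes the gap that makes the statement conjectural. It does not, and you say so yourself; what you have done is correctly locate that gap. Your first half --- the Lagrangian property of the image --- is essentially a verbatim transcription of the paper's proof of Proposition \ref{CanDouStK}: the Chow-group computation of $\sum\bc_2^{CH}(f^*E)$ is stability-independent, Propositions \ref{pbsymvan}, \ref{CCfEst0} and Theorem \ref{CC0} are available for $(H,A)$-stability, and the codimension bookkeeping ($\le 1$ from the summation-map computation, $\ge 1$ from the codimension-$2$ kernel) goes through. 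Note, however, that even this half silently leans on two facts that are only asserted, not proved, in the paper for $(H,A)$-stability: that $f^*w$ primitive plus $f^*A$ being $f^*w$-general excludes strictly semistable sheaves (stated in passing after Proposition \ref{CanDouSt}), and crucially that the codimension of the Albanese kernel in the ambient moduli space is $2$ --- which presupposes knowledge of the Albanese/Bogomolov factor structure of $M_{X;f^*H,f^*A}(f^*w)$, i.e.\ precisely the kind of structural result that is missing.

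The genuine obstacle is, as you identify, the claim that $K_{X;f^*H,f^*A}(f^*w)$ is an irreducible symplectic manifold, i.e.\ an $(H,A)$-analogue of Yoshioka's \cite[Theorem 0.2]{Yos01}. Your proposed bridge --- use the overlap of $(H,A)$-stability and twisted stability on surfaces \cite[Corollary 6.2.6]{Zow10} to identify the kernel with a twisted-stability kernel and invoke a twisted version of Yoshioka's theorem --- is a plausible strategy, but it is a research programme, not an argument: one must produce the twisting sheaf $V$ realising the identification, check that primitivity of $f^*w$ and generality of $f^*A$ translate into the hypotheses (primitivity and generality of a twisted polarisation) under which Yoshioka's deformation and Fourier--Mukai arguments run, and verify that the summation map and its fibres match under the identification of moduli spaces. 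None of these steps is carried out, and you concede that establishing them ``is the essential remaining step.'' So the proposal is an accurate reduction of the conjecture to a precisely named open ingredient, consistent with the paper's own assessment, but it is not a proof of the statement.
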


\subsection{Examples}\label{example}

As described so far, canonical double coverings produce Lagrangian subvarieties via pullback.
There are basically two different cases to distinguish: odd and even rank.
We expect that the moduli space of sheaves of odd rank on an Enriques or bielliptic surface $Y$ behave like the rank 1 case.
In particular, Yoshioka proved that $M_H(r,0,\frac{r+1}2)\cong \Hilb^{\frac{r+1}2}(Y)$ \cite[Corollary 4.4]{Yos03} for odd rank $r$ and general $H$.
As any moduli space of sheaves of rank 1 and of fixed determinant is isomorphic to a Hilbert scheme of points on the underlying surface, we have a look at these Hilbert schemes.

The easiest case is the morphism $f^*\colon Y\to \Hilb^2(X)$ induced by pullback by a canonical double covering $f$, which embeds an Enriques or bielliptic surface into the Hilbert scheme of 2 points of a K3 or abelian surface, respectively. More generally, one has the embedding $f^*\colon \Hilb^t(Y)\to \Hilb^{2t}(X)$ by Proposition \ref{CanDouSt} item 1 (Enriques case) and Theorem \ref{CC0} (bielliptic case). First one might ask what is the geometry of $\Hilb^t(Y)$? Oguiso and Schr\"oer proved in \cite{OS11} the following results:

\begin{theorem}[\cite{OS11} 3.1] Let $Y$ be an Enriques surface and $m\ge 2$. Then $\pi_1(\Hilb^m(Y))$ is cyclic of order two, and the universal covering of $\Hilb^m(Y)$ is a Calabi-Yau manifold.
\end{theorem}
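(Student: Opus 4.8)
The plan is to realise $\Hilb^m(Y)$ as a crepant resolution of a finite quotient of a simply connected surface, which lets me extract both the fundamental group and the holomorphic forms from one picture. Write $Y=X/\sigma$ with $X$ a K3 surface and $\sigma$ the fixed-point-free Enriques involution. The Hilbert--Chow morphism $\rho\colon\Hilb^m(Y)\to\mathrm{Sym}^m(Y)=Y^m/S_m$ is a crepant resolution, and since $Y^m=X^m/(\IZ/2)^m$ one has $\mathrm{Sym}^m(Y)=X^m/G$ with $G=(\IZ/2)\wr S_m=(\IZ/2)^m\rtimes S_m$ the hyperoctahedral group acting on the simply connected space $X^m$. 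Thus the two assertions reduce to (i) $\pi_1(X^m/G)=\IZ/2$ and (ii) the canonical double cover being Calabi--Yau.

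For (i) I would invoke Armstrong's theorem: for a group acting properly discontinuously on a simply connected space, the fundamental group of the quotient is $G/N$, where $N$ is the normal closure of the elements possessing a fixed point. An element $((\epsilon_i)_i,\tau)\in G$ fixes a point of $X^m$ if and only if around every cycle of $\tau$ the accumulated twist is even, since $\sigma$ is free and so $\sigma^{\mathrm{odd}}$ has no fixed point; hence $N$ is contained in the kernel of the parity homomorphism $\phi\colon G\to\IZ/2$, $((\epsilon_i)_i,\tau)\mapsto\sum_i\epsilon_i$. Conversely the transpositions $(ij)$ and the swap-and-twist elements $((\sigma_i,\sigma_j),(ij))$ do have fixed points, and together they generate $\ker\phi$ (they yield all products $\sigma_i\sigma_j$ as well as all of $S_m$). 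Therefore $N=\ker\phi$ and $\pi_1(\mathrm{Sym}^m(Y))=G/N=\IZ/2$. As the fibres of $\rho$ are products of punctual Hilbert schemes, which are irreducible and simply connected, the resolution induces an isomorphism $\pi_1(\Hilb^m(Y))\cong\pi_1(\mathrm{Sym}^m(Y))=\IZ/2$.

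For (ii) I would first determine the canonical bundle: crepancy gives $K_{\Hilb^m(Y)}=\rho^*K_{\mathrm{Sym}^m(Y)}$, and $K_{\mathrm{Sym}^m(Y)}$ is the descent of the $S_m$-linearised bundle $K_Y^{\boxtimes m}$. Since $K_Y$ is nontrivial $2$-torsion, so is $K_{\Hilb^m(Y)}$, whence the associated connected \'etale double cover $\tilde Z\to\Hilb^m(Y)$ has trivial canonical bundle; being attached to the unique index-two subgroup it is the universal cover, hence simply connected. It then remains to check $\bH^0(\tilde Z,\Omega^p)=0$ for $0<p<2m$. Identifying $\tilde Z$ with the crepant resolution of $X^m/\ker\phi$, its holomorphic $p$-forms are the $\ker\phi$-invariant holomorphic $p$-forms on $X^m$. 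Because $\bH^0(X,\Omega^1)=0$ and $\bH^0(X,\Omega^2)=\IC\,\omega_X$, every such form is a wedge $\bigwedge_{i\in S}\omega_{X,i}$ over a subset $S\subseteq\{1,\dots,m\}$; invariance under all $\sigma_i\sigma_j\in\ker\phi$ combined with $\sigma^*\omega_X=-\omega_X$ forces $S=\emptyset$ or $S=\{1,\dots,m\}$, that is $p=0$ or $p=2m$. This produces exactly the Calabi--Yau Hodge pattern and finishes the argument.

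The hard part will be the last step: rigorously identifying the holomorphic forms on the resolution $\tilde Z$ with the $\ker\phi$-invariant forms on $X^m$. This requires an extension statement for reflexive differentials across the (canonical, crepant) quotient singularities --- available classically for quotient singularities in the style of Steenbrink, and more generally through the extension theorems for differential forms on klt spaces --- after which the sign bookkeeping driven by $\sigma^*\omega_X=-\omega_X$ is routine. By comparison, the Armstrong computation of $N$ and the simple connectedness of the punctual Hilbert schemes are standard.
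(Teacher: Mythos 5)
The first thing to note is that the paper contains no proof of this statement: it is quoted verbatim from Oguiso--Schr\"oer as [OS11, Theorem 3.1] and used as a black box, so there is no internal argument to compare yours against. Judged on its own, your proof follows the natural route (essentially that of the original paper) and is correct in substance: the wreath-product presentation $\mathrm{Sym}^m(Y)=X^m/G$ with $G=(\IZ/2)\wr S_m$ acting on the simply connected $X^m$; the Armstrong computation identifying the normal subgroup generated by elements with fixed points as exactly $\ker\phi$ (your fixed-point criterion ``even accumulated twist around every cycle'' is right, precisely because $\sigma$ is free); and the reduction of the Hodge-theoretic statement to $\ker\phi$-invariant holomorphic forms on $X^m$, which the anti-invariance $\sigma^*\omega_X=-\omega_X$ kills in all degrees $0<p<2m$.

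Two steps are stated more casually than they deserve. First, the isomorphism $\pi_1(\Hilb^m(Y))\cong\pi_1(\mathrm{Sym}^m(Y))$ does not follow formally from the fibres of the Hilbert--Chow morphism being connected and simply connected; for singular targets fibrewise simple connectedness alone is not enough, and you should instead invoke the theorem that a resolution of a normal variety with quotient (more generally klt) singularities induces an isomorphism on $\pi_1$ (Koll\'ar, Takayama), or quote the classical fact that $\pi_1(\Hilb^m(S))\cong\bH_1(S;\IZ)$ for any smooth projective surface $S$ and $m\ge 2$, which gives $\IZ/2$ for $Y$ Enriques at once. Second, you assert that $K_{\Hilb^m(Y)}$ is \emph{nontrivial} $2$-torsion: the $2$-torsion is clear from crepancy and descent, but nontriviality needs a check, e.g.\ $h^0(K_{\Hilb^m(Y)})=h^0(K_{\mathrm{Sym}^m(Y)})\le h^0\bigl(Y^m,K_Y^{\boxtimes m}\bigr)=h^0(Y,K_Y)^m=0$, so the canonical bundle has no section and cannot be trivial. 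Without this, the connectedness of your canonical double cover --- and hence its identification with the universal cover --- is unjustified. With these two patches, and the Steenbrink/GKKP extension theorem that you correctly single out as the technical crux of identifying forms on the resolution with invariant forms upstairs, the argument is complete.
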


\begin{theorem}[\cite{OS11} 3.5] Let $Y$ be a bielliptic surface and $m\ge 2$. Then there is an \'etale covering $\cH \to\Hilb^m(Y)$ so that $\cH$ is the product of an elliptic curve and a Calabi-Yau manifold of dimension $2m-1$.
\end{theorem}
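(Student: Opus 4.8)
The plan is to produce $\cH$ from the Beauville--Bogomolov decomposition theorem, applied after passing to a finite \'etale cover on which the canonical bundle becomes trivial, and then to identify the torus factor with a one-dimensional piece by a fundamental-group computation. First I would record that $K_{\Hilb^m(Y)}$ is torsion of the same order $n$ as $K_Y$: the Hilbert--Chow morphism $\rho\colon \Hilb^m(Y)\to\mathrm{Sym}^m(Y)$ is crepant, so $K_{\Hilb^m(Y)}\cong\rho^*K_{\mathrm{Sym}^m(Y)}$, and $K_{\mathrm{Sym}^m(Y)}$ is the descent of $K_Y^{\boxtimes m}$, whose order equals $\ord(K_Y)=n$. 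The associated cyclic canonical covering $\tilde\cH\to\Hilb^m(Y)$ is then \'etale of degree $n$, and $K_{\tilde\cH}$ is trivial.

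Since $\tilde\cH$ is a compact K\"ahler manifold with trivial canonical bundle, the Beauville--Bogomolov decomposition theorem provides a further finite \'etale cover $\cH\to\tilde\cH$, hence a finite \'etale cover $\cH\to\Hilb^m(Y)$, of the form $\cH\cong T\times V$, where $T$ is a complex torus realising the Albanese of $\cH$ as a direct factor, and $V$ is simply connected with trivial canonical bundle (a product of strict Calabi--Yau and irreducible symplectic factors). It then remains to prove that $\dim T=1$: for then $\dim V=\dim\cH-1=2m-1$, and since $2m-1$ is odd, $V$ admits no holomorphic symplectic form, so the simply connected factor $V$ is a Calabi--Yau manifold of dimension $2m-1$ in the required (broad) sense.

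To pin down $\dim T$ I would use the fundamental group. For $m\ge 2$ one has $\pi_1(\Hilb^m(Y))\cong H_1(Y,\IZ)$, via the Hilbert--Chow morphism together with the identification $\pi_1(\mathrm{Sym}^m(Y))\cong H_1(Y,\IZ)$; this is precisely the input behind the Enriques statement, where $H_1(Y,\IZ)\cong\IZ/2$ yields the cyclic $\pi_1$ and a simply connected (no torus factor) cover. For a bielliptic surface one has $b_1(Y)=2$, which is also visible from the explicit description $Y\cong(B\times C)/G$ used earlier in the paper, so $H_1(Y,\IZ)\cong\IZ^2\oplus F$ with $F$ a finite group. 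Now $\pi_1(\cH)$ is a finite-index subgroup of this abelian group; on the other hand, by the splitting $\cH\cong T\times V$ with $V$ simply connected, $\pi_1(\cH)\cong\pi_1(T)\cong\IZ^{2\dim T}$ is free abelian. A free-abelian subgroup of finite index in $\IZ^2\oplus F$ has rank $2$, whence $2\dim T=2$, that is $\dim T=1$, so $T$ is an elliptic curve and $\cH\cong T\times V$ is the asserted product.

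The main obstacle is the control of the torus factor, which rests on the two inputs $\pi_1(\Hilb^m(Y))\cong H_1(Y,\IZ)$ (this requires $m\ge 2$ and is the reason for that hypothesis) and the fact, guaranteed by the decomposition theorem, that the Albanese torus occurs as a genuine direct factor after a finite \'etale cover. Once these are established, the dimension bookkeeping is immediate. A secondary point to verify is that the chosen \'etale cover $\cH$ may be taken connected, so that $V$ is connected; this follows since $\cH$ and $T$ are connected, and it is what legitimises treating $V$ as a single Calabi--Yau factor rather than a disjoint union.
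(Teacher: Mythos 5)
The paper itself contains no proof of this statement --- it is quoted verbatim from Oguiso--Schr\"oer \cite{OS11} --- so your argument can only be assessed on its own merits. Most of it is sound: $K_{\Hilb^m(Y)}$ is indeed torsion of order $n=\ord(K_Y)$ (crepancy of Hilbert--Chow plus injectivity of $\Pic(Y)\to\Pic(\Hilb^m(Y))$), the Beauville--Bogomolov decomposition theorem applies and yields a finite \'etale cover $\cH\cong T\times V$ with $T$ a complex torus and $V$ simply connected with trivial canonical bundle, and your fundamental-group bookkeeping is correct: $\pi_1(\cH)\cong\IZ^{2\dim T}$ sits with finite index in $\pi_1(\Hilb^m(Y))\cong H_1(Y,\IZ)\cong\IZ^2\oplus F$ with $F$ finite, forcing $\dim T=1$, so $T$ is an elliptic curve and $\dim V=2m-1$.

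The genuine gap is the last step, where you declare $V$ to be a Calabi--Yau manifold ``in the broad sense''. In \cite{OS11} (and in the standard usage this theorem relies on) a Calabi--Yau manifold of dimension $d$ is simply connected with trivial canonical bundle \emph{and} $H^0(V,\Omega^p_V)=0$ for $0<p<d$; equivalently, its Beauville--Bogomolov decomposition consists of a single strict Calabi--Yau factor. Your argument only shows that $V$ is simply connected with $K_V\cong\cO_V$, i.e.\ a product of strict Calabi--Yau and irreducible symplectic factors. The remark that $2m-1$ is odd excludes only a symplectic form on $V$ as a whole; it does not exclude, say, $V\cong W\times V'$ with $W$ a K3 surface (or any irreducible symplectic manifold) and $V'$ a strict Calabi--Yau of odd dimension --- such a $V$ is simply connected, odd-dimensional, has trivial canonical bundle and no symplectic form, yet has $h^{2,0}(V)=1$ and is not a Calabi--Yau manifold. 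Ruling out these mixed products is precisely the nontrivial content of the cited theorem, and it requires controlling the intermediate holomorphic forms on the cover, e.g.\ by equivariant Macdonald/G\"ottsche-type computations of $h^{p,0}$ using the explicit splitting $Y\cong(B\times C)/G$. Note also that the cheap numerical argument that works in the Enriques case (there $\chi(\cO_{\Hilb^m(Y)})=1$, so the universal double cover has $\chi(\cO)=2$, which forces a single even-dimensional strict Calabi--Yau factor) is unavailable here: for bielliptic $Y$ one has $\chi(\cO_Y)=0$, hence $\chi(\cO)=0$ for $\Hilb^m(Y)$ and for every \'etale cover of it, so Euler-characteristic bookkeeping places no restriction at all on the factors of $V$.
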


\noindent Thus the subvariety $f^*(\Hilb^t(Y))\subset\Hilb^{2t}(X)$, which is isomorphic to $\Hilb^t(Y)$, has the corresponding covering from the respective theorem above.
We ask the following

\begin{question}\label{OSgen}
How do these results of Oguiso and Schr\"oer generalise to moduli spaces of sheaves?
\end{question}

On the other hand, we expect that the moduli spaces of sheaves of even rank are less close to the Hilbert scheme case.
Quite recently Hauzer established a connection between certain moduli spaces of sheaves of even rank and certain moduli spaces of sheaves of rank 2 or 4 if the Enriques surface is unnodal \cite[Theorem 2.8]{Hau10}. Moreover, he described particular one-dimensional moduli spaces of rank 2 sheaves on Enriques surfaces:

\begin{theorem}[\cite{Hau10} 0.1]
Let $Y$ be an Enriques surface, and $F_1$ and $F_2$ the two multiple fibres of an elliptic fibration of $Y$.
Then there exists an explicit class of polarisations $H$ such that $M_{Y;H}(2,F_1,1)\cong F_2$.
\end{theorem}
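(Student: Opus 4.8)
My plan is to identify $M_{Y;H}(2,F_1,1)$ explicitly by realising every stable sheaf as a Serre extension parametrised by $F_2$. First I would record the numerical data. By Riemann--Roch a sheaf $E$ with $u(E)=(2,F_1,1)$ has $\bc_2(E)=1$, and since $F_1^2=0$ and $F_1.K_Y=0$ one finds $\Delta(u)=4$ and $\chi(u,u)=0$. For a stable $E$ that is not fixed by $\otimes K_Y$ one has $\ext^2(E,E)=\hom(E,E\otimes K_Y)=0$, so together with $\hom(E,E)=1$ the relation $\chi(u,u)=0$ gives $\ext^1(E,E)=1$; hence $M^s_Y(u)$ is smooth of dimension $1$ there, matching $\dim F_2=1$. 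Throughout I would take $H$ in the announced ``explicit class'' to be $u$-general, so that there are no strictly semistable sheaves and $M_{Y;H}(2,F_1,1)=M^s_{Y;H}(2,F_1,1)$ is projective; one also checks that no stable sheaf with these invariants is fixed by $\otimes K_Y$, so $\ext^2$ vanishes identically and the whole space is smooth of dimension $1$.

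Next I would pass from a stable sheaf to a point of $F_2$. Since $\mu(E)=\tfrac12 F_1.H>0$, any nonzero map $E\to K_Y$ would have image a rank-one subsheaf of $K_Y$ of slope $\le 0<\mu(E)$, contradicting stability; hence $\bH^2(Y,E)\cong\Hom(E,K_Y)^\vee=0$ and $\chi(E)=1$ forces $\hom(\cO_Y,E)\ge 1$. For $H$ in the explicit class one shows that a nonzero section of a stable $E$ has only isolated zeros (a divisorial zero would produce a sub-line bundle $\cO_Y(D)$ with $0<D.H<\tfrac12 F_1.H$, $D.F_1\ge 0$ and $D^2\ge F_1.D-1$, and the choice of $H$ excludes all such $D$); it then yields a sequence $0\to\cO_Y\to E\to\mathcal{I}_p(F_1)\to 0$ with $p$ a single point, as $\bc_2=1$. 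Using $F_1+K_Y\sim F_2$ and Serre duality, $\Ext^1(\mathcal{I}_p(F_1),\cO_Y)\cong\bH^1(Y,\mathcal{I}_p(F_2))^\vee$; the sequence $0\to\mathcal{I}_p(F_2)\to\cO_Y(F_2)\to\cO_p\to 0$, together with $\hom(\cO_Y,\cO_Y(F_2))=1$ (the half-fibre $F_2$ is rigid) and $\bH^1(Y,\cO_Y(F_2))=0$, shows this group vanishes for $p\notin F_2$ and is one-dimensional for $p\in F_2$. As a split extension is decomposable, hence not stable, stability forces the extension to be non-split, whence $p\in F_2$ and the class is unique up to scalar. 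Because $F_1$ and $F_2$ are disjoint fibres, $p\notin F_1$, so $\hom(\cO_Y,E)=1$ and $p$ is recovered intrinsically from the canonical quotient $\mathcal{I}_p(F_1)$ (its unique non-locally-free point); thus $E\mapsto p$ is well defined and injective.

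Conversely, for every $p\in F_2$ the unique non-split extension $E_p$ is stable for $u$-general $H$: a destabilising sub-line bundle $\cO_Y(D)$ must map nontrivially to $\mathcal{I}_p(F_1)$ (its composite to $\cO_Y$ would force $D\le 0$), giving $0\le D\le F_1$ with $D.H\ge\tfrac12 F_1.H$, which $u$-generality excludes. Since the groups $\Ext^1(\mathcal{I}_p(F_1),\cO_Y)$ form a line bundle over $F_2$, relative $\Ext$ produces a universal extension, i.e.\ a flat family of stable sheaves on $Y\times F_2$, and by the corepresenting property of the moduli space a morphism $\Phi\colon F_2\to M_{Y;H}(2,F_1,1)$. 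By the previous paragraph $\Phi$ is bijective, and both its source and target are smooth projective curves --- the target by $\ext^2=0$, the dimension count, and irreducibility, since set-theoretically it is the image of the irreducible $F_2$. A bijective morphism of smooth projective curves in characteristic zero has degree one and hence is an isomorphism, giving $M_{Y;H}(2,F_1,1)\cong F_2$.

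The main obstacle is the exhaustion step of the second paragraph: one must guarantee that \emph{every} stable $E$ admits a section vanishing only in codimension $\ge 2$, so that it genuinely arises as some $E_p$, and this is precisely where the delicate ``explicit'' choice of $H$ is indispensable, since it must simultaneously kill all admissible destabilising divisors $D$ and all strictly semistable sheaves. A secondary subtlety is upgrading the resulting bijection to an isomorphism, which relies on the global smoothness of $M_{Y;H}(2,F_1,1)$ established at the outset (in particular on verifying that no stable sheaf with these invariants is $\otimes K_Y$-fixed), so that $\Phi$ is a morphism between normal curves rather than merely a set-theoretic bijection.
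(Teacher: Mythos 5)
This statement is not proved in the paper at all: it is Theorem 0.1 of Hauzer's article \cite{Hau10}, quoted verbatim and used as a black box in Section \ref{example} (the paper's own contribution begins only afterwards, when the curve $M_{Y;H}(2,F_1,1)\cong F_2$ is fed into the pullback morphism $f^*$). So there is no proof here to compare yours against, and your proposal has to stand on its own as a proof of Hauzer's theorem.

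Judged that way, it has a genuine gap, and the gap sits exactly where the content of the theorem lies. Your setup is correct: $\Delta(u)=4$, $\chi(u,u)=0$, $\bc_2=1$, the relation $F_1+K_Y\sim F_2$, and the computation that $\Ext^1(\mathcal{I}_p(F_1),\cO_Y)$ is one-dimensional for $p\in F_2$ and zero otherwise, as well as the recovery of $p$ from $E$ (using $F_1\cap F_2=\emptyset$). But the two steps that make the theorem true --- that \emph{every} stable sheaf admits a section vanishing in codimension two, and that \emph{every} non-split extension $E_p$ is stable --- are both deferred to an ``explicit class'' of polarisations that you never exhibit; since the theorem is precisely the assertion that such a class exists, your argument reduces the statement to itself. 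Moreover, the one concrete mechanism you do invoke is insufficient: $u$-generality of $H$ only says that $H$ lies on no wall, which rules out rank-one subsheaves realising \emph{equality} of slopes (hence strictly semistable sheaves); it does not rule out a sub-line bundle with $D.H>\tfrac12 F_1.H$. Whether $E_p$ is stable genuinely depends on which chamber $H$ lies in, and the conclusion $M_{Y;H}(2,F_1,1)\cong F_2$ fails for a wrong chamber; selecting the right chamber is Hauzer's actual work. (Your constraint ``$0\le D\le F_1$'' is also unjustified: a nonzero composite $\cO_Y(D)\to\mathcal{I}_p(F_1)$ only gives $D\sim F_1-A$ with $A$ effective and passing through $p$; $D$ itself need not be effective.) Finally, the assertion that no stable sheaf with these invariants is fixed by $\otimes K_Y$ --- which you need for global smoothness, since at a fixed point one would have $\ext^2(E,E)=1$ and $\ext^1(E,E)=2$ --- is left as ``one also checks''; within this paper's framework it follows only under the additional hypotheses that $f^*F_1$ is primitive and the pulled-back polarisation is general (see Section \ref{example} together with Propositions \ref{fHgenHgen} and \ref{CanDouSt}), i.e.\ it is again a condition on $H$ rather than something automatic.
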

\noindent Be careful that in Hauzer's notation, the last entry of the triple $(2,F_1,1)$ in his article is the second Chern class. However, in this case, Riemann-Roch yields $\chi =1= \bc_2$, hence they look the same in our notation.

This choice of invariants yields the morphism $f^*\colon M_{Y;H}(2,F_1,1)\to M_{X;f^*H}(2,f^*F_1,2)$, which maps $F_2$ 2:1 onto a Lagrangian subvariety $L$ in the surface $M_{X;f^*H}(2,f^*F_1,2)$.

As Hauzer explains in the proof of \cite[Lemma 1.1]{Hau10}, $F_1$ is indivisible in $\bH^2(Y,\IZ)_0$, the torsion free part of $\bH^2(Y,\IZ)$.
The induced map $f^*\colon \bH^2(Y,\IZ)_0\to\bH^2(X,\IZ)$ is injective, hence $f^*F_1$ is primitive in $f^*(\bH^2(Y,\IZ))\subset \bH^2(X,\IZ)$.
By \cite[Proposition 2.3]{Nam85} $f^*(\bH^2(Y,\IZ))$ is the $+1$ eigenspace of the covering involution and therefore a primitive sublattice of $\bH^2(X,\IZ)$.
Hence $f^*F_1$ is primitive in $\bH^2(X,\IZ)$ as well.
If we now choose $H$ such that $f^*H$ is general, then $H$ must already be general by Proposition \ref{fHgenHgen}.
This ensures that we only have stable sheaves, i.e.\ $M_{Y;H}(2,F_1,1)=M^s_{Y;H}(2,F_1,1)$ and $S:=M_{X;f^*H}(2,f^*F_1,2)=M^s_{X;f^*H}(2,f^*F_1,2)$. In particular, $S$ is a projective K3 surface.

Let us consider a general Enriques surface in the sense of \cite[Proposition 5.6]{Nam85}, i.e.\ one has that $f^*(\NS(Y))=\NS(X)$. In particular, hyperplanes in $\NS(X)$ have hyperplanes as preimages in $\NS(Y)$. Going through the proof of \cite[Lemma 1.1]{Hau10} one checks that under this assumption Hauzer's choice of $H$ allows to choose $H$ such that $f^*H$ is general as well.
As $M_{X;f^*H}(2,f^*F_1,2)=M_{X;f^*H}^s(2,f^*F_1,2)$, by Proposition \ref{CanDouSt} item 2 the morphism $f^*$ induces an unramified covering $F_2\to L$ of degree 2 and $M_{Y;H}(2,F_1,1)\cong F_2$ is nonsingular elliptic.
Hence the Lagrangian subvariety $L$ is a nonsingular elliptic curve as well by the Hurwitz formula.

\section{Outlook}\label{Outlook}

Although we have quite general results on the pullback morphism between moduli spaces, the application to particular situations is more interesting if one has relevant results at least on one of these moduli spaces.

One classical example of a cyclic covering is the Godeaux surface covered by the Fermat quintic.
However, not very much is known on the moduli space of semistable sheaves if the underlying surface is of general type. Results of Li \cite{Li94} and, slightly generalised by O'Grady \cite{OGr97}, show that in general the moduli space is of general type as well. General means in particular that the second Chern class of the sheaves is very large.

There are some recent results by Mestrano and Simpson \cite{MS11} on the moduli space $M_{Y;\cO_X(1)}(2,\bc_1(\mathcal O_X(-1)),\chi)$ with arbitrary $\chi$ and $Y\subset\IP^3$ a very general quintic surface. The choice of the first Chern class ensures that the four notions of Gieseker/slope (semi)stability coincide.
By very general the authors mean smooth and at least that $\mathrm{Pic}(X) \cong \mathrm{Pic}(\IP^3) = \IZ$, with further genericity conditions where necessary.

Unfortunately the Fermat quintic is not very general in this sense. The article \cite{Schue11} contains plenty of concrete examples of quintic surfaces in $\IP^3$. The Fermat quintic is contained as Example 3 and is shown to have Picard number 37.

\begin{appendix}
\section{General ample divisors}
In this appendix we recall the notion of general ample divisors and state two results concerning generality and pullback.\\

Let the situation be as in Section \ref{Surfaces}. The ample cone of $Y$ carries a chamber structure for a given triple $u=(r,c,\chi)\in\Hev$ of invariants.
The definition depends on $r$. In the case of $r=1$ we agree that the whole ample cone is the only chamber.

For $r>1$, we follow the definition in \cite[Section 4.C]{HL10}.
Let $\Num(Y):=\Pic(Y)/\equiv$, where $\equiv$ denotes numerical equivalence, and $\Delta:=\Delta(u)>0$.
\begin{definition}
Let
$$W(r,\Delta):=\{ \xi^\perp \cap \Amp(Y)_{\IQ} \;|\; \xi\in\Num(Y) \quad\mathrm{with}\quad -\frac {r^2}4 \Delta \le \xi^2 < 0 \}\,,$$
whose elements are called $u$-walls.
The connected components of the complement of the union of all $u$-walls are called $u$-chambers.
An ample divisor is called $u$-general if it is not contained in a $u$-wall.
\end{definition}

\noindent  The set $W(r,\Delta)$ is locally finite in $\Amp(Y)_{\IQ}$ by \cite[Lemma 4.C.2]{HL10}.

For $r=0$, we follow the definition in \cite[Section 1.4]{Yos01}.

\begin{definition}
Let $c\neq 0$ be effective. For every sheaf $E$ with $u(E)=u$ and every subsheaf $F\subseteq E$ we define $L:=\chi(F)\bc_1(E)-\chi(E)\bc_1(F)$, and for $L\ne 0$ we call
$$W_L:=L^\perp \cap\Amp(Y)_\IQ$$ the $u$-wall defined by $L$.
The connected components of the complement of the union of all $u$-walls are called $u$-chambers.
An ample divisor is called $u$-general if it is not contained in a $u$-wall.
\end{definition}

If $r=0=\chi$ then the notion of $H$-(semi)stability for a sheaf $E$ with $u(E)=u$ is independent of the choice of $H$ and one cannot introduce the notion of a $u$-general ample divisor in this particular case.
However, we can move away from this case, as tensoring with the ample line bundle $H$ yields the isomorphism
$M_{Y;H}(0,c,\chi)\cong M_{Y;H}(0,c,\chi+c.H)\,.$
Thus one can assume without loss of generality that $\chi\neq 0$ when investigating the moduli spaces of one-dimensional semistable sheaves on a surface.

\begin{lemma}\label{DeltaPb}
$\Delta(f^*u)=\deg f \Delta(u)$.
\end{lemma}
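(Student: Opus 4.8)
The plan is to substitute $f^*u=(r,f^*c,\deg f\cdot\chi)$ directly into the defining formula for $\Delta$, now evaluated on the surface $X$, and to check that each of the four summands scales by the factor $\deg f$. Since $f^*u$ has rank $r$, first Chern class $f^*c$ and Euler characteristic $\deg f\cdot\chi$, the formula reads
\[
\Delta(f^*u)=(f^*c)^2-2r\,(\deg f\cdot\chi)+2r^2\chi(\cO_X)-r\,(f^*c).K_X\,,
\]
so it suffices to compare the three ingredients $(f^*c)^2$, $\chi(\cO_X)$ and $(f^*c).K_X$ with their counterparts on $Y$.

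Three standard facts accomplish this. First, since $f$ is a finite covering of degree $\deg f$, intersection numbers are multiplied by $\deg f$ under pullback, i.e.\ $f^*D_1.f^*D_2=\deg f\cdot(D_1.D_2)$ for divisor classes $D_1,D_2$ on $Y$; in particular $(f^*c)^2=\deg f\cdot c^2$. Second, $\chi(\cO_X)=\deg f\cdot\chi(\cO_Y)$, which is the special case $E=\cO_Y$ of the identity $\chi(f^*E)=\deg f\cdot\chi(E)$ already used in Lemma \ref{rHpPb} (taken from \cite[\S 12 Theorem 2]{Mum70}). Third, $K_X=f^*K_Y$ as recalled in the proof of Lemma \ref{PurePbInv} (from \cite[Section I.16]{BHPV}), whence $(f^*c).K_X=(f^*c).(f^*K_Y)=\deg f\cdot(c.K_Y)$ by the first fact. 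Inserting these three relations and factoring out $\deg f$ yields
\[
\Delta(f^*u)=\deg f\cdot\bigl(c^2-2r\chi+2r^2\chi(\cO_Y)-r\,c.K_Y\bigr)=\deg f\cdot\Delta(u)\,.
\]

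There is no genuine obstacle here: the computation is a one-line substitution once the three scaling facts are in place. The only ingredient not already invoked verbatim in the excerpt is the multiplicativity of intersection numbers under a finite covering, and even this can be avoided. Indeed, since $\chi(u,u)=\chi(\cO_Y)r^2-\Delta(u)$ by definition and likewise $\chi(f^*u,f^*u)=\chi(\cO_X)r^2-\Delta(f^*u)$, the claim is equivalent to $\chi(f^*u,f^*u)=\deg f\cdot\chi(u,u)$, which is precisely the relation $\chi(f^*E,f^*F)=\deg f\cdot\chi(E,F)$ established in the proof of Lemma \ref{CycCanDim}, combined once more with $\chi(\cO_X)=\deg f\cdot\chi(\cO_Y)$. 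Either route gives the statement immediately.
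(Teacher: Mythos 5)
Your proof is correct and is essentially identical to the paper's: the paper also substitutes $f^*u=(r,f^*c,\deg f\,\chi)$ into the defining formula and uses $(f^*c)^2=\deg f\,c^2$, $\chi(\cO_X)=\deg f\,\chi(\cO_Y)$, and $K_X=f^*K_Y$ to factor out $\deg f$. The alternative route you sketch via $\chi(f^*E,f^*F)=\deg f\,\chi(E,F)$ is a valid bonus observation, but the main argument matches the paper's line for line.
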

\begin{proof}\ \\\vspace{-1.3cm}
\begin{eqnarray*}
\Delta(f^*u)
&=& (f^*c)^2 - 2r\deg f\;\chi+2r^2\chi(\cO_X)-rf^*c.K_X\\
&=& \deg f\;c^2 - 2r\deg f\;\chi+2r^2\deg f\;\chi(\cO_Y)-rf^*c.f^*K_Y\\
&=& \deg f(c^2 - 2r\chi+2r^2\chi(\cO_Y)-rc.K_Y)= \deg f\;\Delta(u)
\end{eqnarray*}\qede
\end{proof}

\begin{proposition}\label{fHgenHgen}
If $f^*H$ is $f^*u$-general, then $H$ is $u$-general.
\end{proposition}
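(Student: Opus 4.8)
The plan is to prove the contrapositive: if $H$ lies on a $u$-wall, then $f^*H$ lies on a $f^*u$-wall. Since the definition of a $u$-general divisor splits according to the rank $r$, I would treat the cases separately, the case $r=1$ being vacuous because then the whole ample cone is the only $u$-chamber, so $f^*H$ is automatically $f^*u$-general and there is nothing to prove. The two genuine cases $r>1$ and $r=0$ both rest on two elementary properties of the finite \'etale covering $f$: the intersection form scales by the degree, i.e.\ $(f^*\alpha).(f^*\beta)=\deg f\,(\alpha.\beta)$ for $\alpha,\beta\in\Num(Y)$, and, as a consequence, $f^*\colon\Num(Y)\to\Num(X)$ is injective (if $f^*\alpha=0$ then $\deg f\,(\alpha.\beta)=(f^*\alpha).(f^*\beta)=0$ for all $\beta$, whence $\alpha=0$ by non-degeneracy of the pairing on $\Num(Y)$). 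Recall also that $f^*H$ is ample, as noted in Section~\ref{arbdim}.

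For $r>1$, suppose $H$ lies on the $u$-wall $\xi^{\perp}\cap\Amp(Y)_{\IQ}$ with $\xi\in\Num(Y)$ and $-\frac{r^2}4\Delta(u)\le\xi^2<0$. I would take $f^*\xi\in\Num(X)$ as the candidate wall class for $f^*u$, noting that $f^*u$ has the same rank $r$. The scaling property gives $(f^*\xi)^2=\deg f\,\xi^2<0$, and combining it with Lemma~\ref{DeltaPb} ($\Delta(f^*u)=\deg f\,\Delta(u)$) yields
$$-\frac{r^2}4\Delta(f^*u)=\deg f\left(-\frac{r^2}4\Delta(u)\right)\le\deg f\,\xi^2=(f^*\xi)^2<0\,,$$
so $f^*\xi$ defines a genuine $f^*u$-wall. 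Finally $(f^*\xi).(f^*H)=\deg f\,(\xi.H)=0$, so $f^*H$ lies on this wall and is not $f^*u$-general.

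For $r=0$ the defining class of the wall comes from a sheaf, so I would pull back the sheaf data. Write the $u$-wall containing $H$ as $W_L$ with $L=\chi(F)\bc_1(E)-\chi(E)\bc_1(F)\ne 0$ for some $E$ with $u(E)=u$ and a subsheaf $F\subseteq E$. Since $f$ is flat, $f^*$ is exact and $f^*F\subseteq f^*E$ is again a subsheaf, with $u(f^*E)=f^*u$ and $\bc_1(f^*E)=f^*c$ still nonzero and effective. A direct computation using $\chi(f^*{\,\cdot\,})=\deg f\,\chi({\,\cdot\,})$ gives the associated class
$$L':=\chi(f^*F)\,\bc_1(f^*E)-\chi(f^*E)\,\bc_1(f^*F)=\deg f\cdot f^*L\,,$$
which is nonzero by injectivity of $f^*$ on $\Num$, and satisfies $L'.(f^*H)=(\deg f)^2\,(L.H)=0$; hence $f^*H$ lies on the $f^*u$-wall $W_{L'}$.

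The only points requiring care, and hence the main obstacle, are the bookkeeping ensuring that the pulled-back class still defines a bona fide wall rather than degenerating. For $r>1$ this is exactly the preservation of the two-sided inequality $-\frac{r^2}4\Delta\le\xi^2<0$, which works because both the discriminant and the self-intersection scale by the same positive factor $\deg f$. For $r=0$ it is the non-vanishing $L'\ne 0$, which is precisely where injectivity of $f^*$ on $\Num$ (equivalently, non-degeneracy of the intersection pairing together with the degree-scaling) enters.
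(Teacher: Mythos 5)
Your proposal is correct and takes essentially the same route as the paper: both arguments show that the pullback of a $u$-wall class is an $f^*u$-wall class, using $(f^*\xi)^2=\deg f\;\xi^2$ together with Lemma \ref{DeltaPb} for $r\ge 2$, the computation $L'=\deg f\; f^*L$ for $r=0$, and the scaling $f^*\xi.f^*H=\deg f\;\xi.H$ to transfer non-orthogonality. Your contrapositive phrasing, the explicit treatment of the trivial case $r=1$, and the explicit verification that $L'\neq 0$ via injectivity of $f^*$ on $\Num(Y)$ (a point the paper leaves implicit) are only minor refinements of the paper's direct argument.
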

\begin{proof}
According to the definition of a general ample divisor, one has to distinguish by the rank $r$.
\begin{enumerate}
\item \emph{Positive rank $r\ge 2$, i.e.\ twodimensional sheaves:}
Let $\xi\in\Num(Y)$ with $-\frac {r^2}4 \Delta \le \xi^2 < 0$.
Then $f^*\xi\in\Num(X)$, and one has $(f^*\xi)^2=\deg f\;\xi^2$.
Hence $$-\frac {r^2}4 \deg f\;\Delta \le \deg f\; \xi^2=(f^*\xi)^2 < 0\,.$$ 
By Lemma \ref{DeltaPb} one has $\Delta(f^*u)=\deg f \Delta(u)$, so $f^*\xi$ defines the $f^*u$-wall $(f^*\xi)^\perp \cap \Amp(X)_{\IQ}$.
As $f^*H$ is $f^*u$-general, one has $0\neq f^*\xi.f^*H=\deg f\;\xi.H$.
Thus $H$ is $u$-general.
\item \emph{Rank $r=0$ with effective first Chern class $c$, i.e.\ onedimensional sheaves:}
Let $E$ be a coherent sheaf with $u(E)=u$ and $F\subseteq E$ such that $L:=\chi(F)\bc_1(E)-\chi(E)\bc_1(F)\ne 0$.
One has 
\begin{eqnarray*}
\tilde L
&:=&\chi(f^*F)\bc_1(f^*E)-\chi(f^*E)\bc_1(f^*F)\\
&=&\deg f\;\chi(F)f^*\bc_1(E)-\deg f\;\chi(E)f^*\bc_1(F)=\deg f\;f^*L\,.
\end{eqnarray*}
As $f^*H$ is $f^*u$-general, one has $0\neq \tilde L.f^*H=\deg f\; f^*L.f^*H=(\deg f)^2\;L.H$.
Thus $H$ is $u$-general.
\qedhere
\end{enumerate}
\end{proof}

\begin{lemma}
If $f^*u$ is primitive, then $u$ is primitive.
\end{lemma}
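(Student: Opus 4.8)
The plan is to exploit that pullback is additive on the group of sheaf invariants, so that non-primitivity is transported in the direction we need. First I would extend the operation $f^*$ to a map on all of $\Hev$ by setting $f^*(r,c,\chi):=(r,f^*c,\deg f\,\chi)\in\Lambda(X)$, where $\Lambda(X):=\IN_0\oplus\NS(X)\oplus\IZ$; this agrees with the definition of $f^*u$ given at the beginning of Section~\ref{Surfaces}. Since $f^*\colon\NS(Y)\to\NS(X)$ is a homomorphism of abelian groups and multiplication by $\deg f$ on $\IZ$ is additive, while the rank coordinate is left unchanged, the extended map $f^*\colon\Hev\to\Lambda(X)$ is itself a homomorphism of abelian groups. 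In particular $f^*(k\,v)=k\,f^*(v)$ for every $k\in\IZ$ and every $v\in\Hev$.

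I would then argue by contraposition. Recall that an element of an abelian group is primitive if it is nonzero and is not of the form $k\,v$ with $k\ge 2$. So suppose $u$ is not primitive. If $u=0$, then $f^*u=0$ is not primitive either, so we may assume $u=k\,v$ for some integer $k\ge 2$ and some $v\in\Hev$. Applying the homomorphism $f^*$ gives $f^*u=f^*(k\,v)=k\,f^*(v)$ with $f^*(v)\in\Lambda(X)$, which exhibits $f^*u$ as a nontrivial integer multiple; hence $f^*u$ is not primitive. The contrapositive of this is exactly the assertion that $f^*u$ primitive implies $u$ primitive.

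I do not expect any genuine obstacle here: the statement is a formal consequence of the additivity of $f^*$, and the only point to verify is that $f^*(v)$ really lands in $\Lambda(X)$, which holds because the rank component is unchanged (hence stays in $\IN_0$), $f^*c'\in\NS(X)$, and $\deg f\,\chi'\in\IZ$. I would remark that the reverse implication is false in general --- the factor $\deg f$ in the Euler-characteristic coordinate can introduce divisibility --- which explains why the lemma is stated in this one direction only; similarly, any torsion in $\NS(Y)$ affects only that reverse direction and plays no role in the argument above.
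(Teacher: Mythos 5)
Your proof is correct and is essentially the same as the paper's: both rest on the single observation that $f^*$ respects integer scaling of the invariant vector, so a factorisation $u=m\,u_0$ pulls back to $f^*u=m\,f^*u_0$. The paper phrases this directly (primitivity of $f^*u$ forces $m=1$) while you phrase it by contraposition, but the argument is identical in substance.
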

\begin{proof}
Let $u=mu_0$ with $u_0=(r_0,c_0,\chi_0)\in\Hev$ and $m\in\IN$.
Then $$f^*u=(mr_0,f^*(mc_0),\deg f\;m\chi_0)=mf^*u_0\,.$$
As $f^*u$ is primitive, one has $m=1$, i.e.\ $u$ is primitive.
\end{proof}

\end{appendix}
 
\bibliographystyle{amsalpha}
\bibliography{my}

\end{document}